\definecolor{mycolorred}{rgb}{1, 0, 0}
\numberwithin{equation}{section}
\def \beq {\begin{eqnarray}}
\def \eeq {\end{eqnarray}}
\def \beqn {\begin{eqnarray*}}
\def \eeqn {\end{eqnarray*}}
\newtheorem{theorem}{Theorem}[section]
\newtheorem{itlemma}[theorem]{Lemma}
\newtheorem{itproposition}[theorem]{Proposition}
\newtheorem{itcorollary}[theorem]{Corollary}
\newtheorem{itremark}[theorem]{Remark}
\newtheorem{itdefinition}[theorem]{Definition}
\newtheorem{itexample}[theorem]{Example}
\newtheorem{itclaim}[theorem]{Claim}
\newtheorem{itfact}[theorem]{Fact}
\newtheorem{itassumption}[theorem]{Assumption}
\newenvironment{fact}{\begin{itfact}\rm}{\end{itfact}}
\newenvironment{claim}{\begin{itclaim}\rm}{\end{itclaim}}
\newenvironment{lemma}{\begin{itlemma}}{\end{itlemma}}
\newenvironment{remark}{\begin{itremark}\rm}{\end{itremark}}
\newenvironment{corollary}{\begin{itcorollary}}{\end{itcorollary}}
\newenvironment{proposition}{\begin{itproposition}}{\end{itproposition}}
\newenvironment{definition}{\begin{itdefinition}\rm}{\end{itdefinition}}
\newenvironment{example}{\begin{itexample}\rm}{\end{itexample}}
\newenvironment{assumption}{\begin{itassumption}}{\end{itassumption}}
\newcommand{\be}[1]{\begin{equation}\label{#1}}
\newcommand{\ee}{\end{equation}}
\newcommand{\bl}[1]{\begin{lemma}\label{#1}}
\newcommand{\br}[1]{\begin{remark}\label{#1}}
\newcommand{\brs}[1]{\begin{remarks}\label{#1}}
\newcommand{\bt}[1]{\begin{theorem}\label{#1}}
\newcommand{\bd}[1]{\begin{definition}\label{#1}}
\newcommand{\bp}[1]{\begin{proposition}\label{#1}}
\newcommand{\bc}[1]{\begin{corollary}\label{#1}}
\newcommand{\bfact}[1]{\begin{fact}\label{#1}.}
\newcommand{\bex}[1]{\begin{example}\label{#1}.}
\newcommand{\ec}{\end{corollary}}
\newcommand{\efact}{\end{fact}}
\newcommand{\eex}{\end{example}}
\newcommand{\el}{\end{lemma}}
\newcommand{\er}{\end{remark}}
\newcommand{\ers}{\end{remarks}}
\newcommand{\et}{\end{theorem}}
\newcommand{\ed}{\end{definition}}
\newcommand{\ep}{\end{proposition}}
\newcommand{\epr}{\end{proof}}
\newcommand{\bpr}{\begin{proof}}
\newcommand{\bcl}[1]{\begin{claim}\label{#1}}
\newcommand{\ecl}{\end{claim}}
\newcommand{\bas}[1]{\begin{assumption}\label{#1}}
\newcommand{\eas}{\end{assumption}}
\newcommand{\ecs}{\end{corollary}}
\newcommand{\eers}{\end{exercise}}
\newcommand{\eexs}{\end{example}}
\newcommand{\eems}{\end{example}}
\newcommand{\els}{\end{lemma}}
\newcommand{\eles}{\end{lemmaex}}
\newcommand{\ets}{\end{theorem}}
\newcommand{\eds}{\end{definition}}
\newcommand{\eps}{\end{proposition}}
\newcommand{\bi}{\begin{itemize}}
\newcommand{\ei}{\end{itemize}}
\newcommand{\ben}{\begin{enumerate}}
\newcommand{\een}{\end{enumerate}}
\def\vbar{\mathchoice{\vrule height6.3ptdepth-.5ptwidth.8pt\kern-.8pt}
   {\vrule height6.3ptdepth-.5ptwidth.8pt\kern-.8pt}
   {\vrule height4.1ptdepth-.35ptwidth.6pt\kern-.6pt}
   {\vrule height3.1ptdepth-.25ptwidth.5pt\kern-.5pt}}
\def\fudge{\mathchoice{}{}{\mkern.5mu}{\mkern.8mu}}
\def\bbc#1#2{{\rm \mkern#2mu\vbar\mkern-#2mu#1}}
\def\bbb#1{{\rm I\mkern-3.5mu #1}}
\def\bba#1#2{{\rm #1\mkern-#2mu\fudge #1}}
\def\bb#1{{\count4=`#1 \advance\count4by-64 \ifcase\count4\or\bba A{11.5}\or
   \bbb B\or\bbc C{5}\or\bbb D\or\bbb E\or\bbb F \or\bbc G{5}\or\bbb H\or
   \bbb I\or\bbc J{3}\or\bbb K\or\bbb L \or\bbb M\or\bbb N\or\bbc O{5} \or
   \bbb P\or\bbc Q{5}\or\bbb R\or\bbc S{4.2}\or\bba T{10.5}\or\bbc U{5}\or
%   \bbb P\or\bbc Q{5}\or\bbb R\or\bba S{8}\or\bba T{10.5}\or\bbc U{5}\or
   \bba V{12}\or\bba W{16.5}\or\bba X{11}\or\bba Y{11.7}\or\bba Z{7.5}\fi}}
\newtheorem*{rep@theorem}{\rep@title} \newcommand{\newreptheorem}[2]{%
\newenvironment{rep#1}[1]{%
\def\rep@title{\bf #2 \ref{##1} }%
\begin{rep@theorem} }%
{\end{rep@theorem} } }
\def\dim{\mathrm{dim}}
\def \R {{\mathbb R}}
\def \P {{\mathbb P}}
\def \C {{\cal{C}}}
\def \N {{\mathbb N}}
\def \PR {{\mathbb P}}
\def \E {{\mathbb E}}
\def \o {\omega}
\def\<{\langle}
\def\>{\rangle}
\def \DD {{\mathbb D}}
\def \s {y}
\def \H {{\cal{H}}}
\newcommand{\ba}[1]{\addtocounter{for}{1} \begin{eqnarray}\label{#1}}
\newcommand{\ea}{\end{eqnarray}}
\def\sqr#1#2{{\vcenter{\vbox{\hrule height .#2pt
                             \hbox{\vrule width .#2pt height#1pt \kern#1pt
                                   \vrule width .#2pt}
                             \hrule height .#2pt}}}}
\def\pmb#1{\setbox0=\hbox{#1}%
   \kern-.025em\copy0\kern-\wd0
   \kern.05em\copy0\kern-\wd0
   \kern-.025em\raise.0433em\box0 }
\def\sqr#1#2{{\vcenter{\vbox{\hrule height.#2pt
     \hbox{\vrule width.#2pt height#1pt \kern#1pt
   \vrule width.#2pt}\hrule height.#2pt}}}}
\def\ve{\varepsilon}
\def\s{\sigma}
\def\d{\delta}
\def\l{\lambda}
\def\g{\gamma}
\def\G{\Gamma}
\def\a{\alpha}
\def\th{\theta}
\def\r{\rho}
\def\Th{\Theta}
\def\cal{\mathcal}
\newenvironment{myenumerate}{%
\begin{list}{\labelenumi}
	{%
	\setlength{\itemsep}{0.4em}%
	\setlength{\topsep}{0.5em}%
	\setlength\leftmargin{2.6em}%
	\setlength\labelwidth{2.15em}%
	\setlength{\labelsep}{0.45em}%
	\usecounter{enumi}%
	}%
	}%
{\end{list}
}
\renewenvironment{enumerate}{
\renewcommand{\theenumi}{\arabic{enumi}}%
\renewcommand{\labelenumi}{{\rm(\theenumi)}}%
\begin{myenumerate}}%
{\end{myenumerate}}
\newenvironment{myitemize}{%
\begin{list}{$\bullet$}%
 	{%
	\setlength{\itemsep}{0.4em}%
	\setlength{\topsep}{0.5em}%
	\setlength\leftmargin{2.6em}%
	\setlength\labelwidth{2.15em}%
	\setlength{\labelsep}{0.45em}%
%	\usecounter{enumi}%
	}%
	}%
{\end{list}}
\renewenvironment{itemize}{
\begin{myitemize}}%
{\end{myitemize}}
\begin{document}

\parindent 0pt

\title{Diffusions under a local strong H\"ormander condition.\\Part I: density estimates}
\author{ \textsc{Vlad Bally}\thanks{%
Universit\'e Paris-Est, LAMA (UMR CNRS, UPEMLV, UPEC), MathRisk INRIA, F-77454
Marne-la-Vall\'{e}e, France. Email: \texttt{bally@univ-mlv.fr} }\smallskip \\
%EndAName
\textsc{Lucia Caramellino}\thanks{%
Dipartimento di Matematica, Universit\`a di Roma - Tor Vergata, Via della
Ricerca Scientifica 1, I-00133 Roma, Italy. Email: \texttt{%
caramell@mat.uniroma2.it}. }\smallskip\\
\textsc{Paolo Pigato}\thanks{%
INRIA, Villers-l\`es-Nancy, F-54600, France
Universit\'e de Lorraine, IECL, UMR 7502, Vandoeuvre-l\`es-Nancy, F-54600, France \texttt{%
paolo.pigato@inria.fr}. }\smallskip\\
}
\maketitle

\begin{abstract}
We study lower and upper bounds for the density of a diffusion process in $\R^n$ in a small (but not asymptotic) time, say $\delta$. We assume that the diffusion coefficients $\sigma_1,\ldots,\sigma_d$ may degenerate at the starting time $0$ and point $x_0$ but they satisfy a strong H\"ormander condition involving the first order Lie brackets. The density estimates are written in terms of a norm which accounts for the non-isotropic structure of the problem: in a small time $\delta$, the diffusion process propagates with speed $\sqrt{\delta}$ in the direction of the diffusion vector fields $\sigma _{j}$ and with speed $\delta=\sqrt{\delta}\times \sqrt{\delta}$ in the direction of $[\sigma _{i},\sigma _{j}]$.
In the second part of this paper, such estimates will be used in order to study lower and upper bounds for the probability that the diffusion process remains in a tube around a skeleton path up to a fixed time.
\end{abstract}

\tableofcontents

\date{}

\section{Introduction}
In this paper we study bounds for the density of a diffusion process at a small time under a local strong H\"ormander condition. To be more precise, let $X$ denote the process in $\R^n$ solution to
\be{equation-intro}
dX_{t}=\sum_{j=1}^{d}\s
_{j}(t,X_{t})\circ dW_{t}^{j}+b(t,X_{t})dt,\quad \quad X_0=x_0.
\ee
where $W=(W^{1},...,W^{d})$ is a standard Brownian motion and $\circ
dW_{t}^{j} $ denotes the Stratonovich integral.
We assume nice differentiability and boundedness or sublinearity properties for the diffusion coefficients  $b$ and $\sigma_j$, $j=1,\ldots,d$, and we consider a degenerate case:
\begin{equation}\label{dim}
\dim\,\sigma(0,x_0)=\dim\,\mathrm{Span}\{\s_1(0,x_0), \ldots,\s_d(0,x_0)\}<n,
\end{equation}
$\dim\,S$ denoting the dimension of the vector space $S$.
Our aim is to study lower and upper bounds for the density of the solution to
\eqref{equation-intro} at a small (but not asymptotic) time, say $\delta$, under the following local strong H\"ormander condition:
\begin{equation}\label{hor-intro}
\mathrm{Span}\{\sigma_i(0,x_0),[\sigma_p,\sigma_j](0,x_0),\ i,p,j=1,\ldots,d\}=\R^n
\end{equation}
in which $[\cdot,\cdot]$ denotes the standard Lie bracket vector field. Notice that we ask for a H\"orman\-der condition at time $0$. Our estimates are written in terms of a norm which reflects the
non-isotropic structure of the problem: roughly speaking, in a small time interval of length $\delta$, the diffusion process moves with speed $\sqrt{\delta}$ in the direction of the diffusion vector fields $\sigma _{j}$ and with speed $\delta=\sqrt{\delta}\times \sqrt{\delta}$ in the direction of $[\sigma _{i},\sigma _{j}]$.
In order to catch this behavior we introduce
the following norms. Let $A_\delta(0,x_0)$ denote the $n\times d^2$ matrix
$$
A_\delta(0,x_0)=[A_{1,\delta}(0,x_0),\ldots,A_{d^2,\delta}(0,x_0)]
$$
where the general column $A_{l,\delta}(0,x_0)$, $l=1,\ldots,d^2$, is defined as follows:
\begin{itemize}
\item
for $l=(p-1)d+i$ with  $p,i\in\{1,\dots,d\}$ and $p\neq i$ then
$$
A_{l,\delta}(0,x_0) =[\sqrt\delta\s_{i},\sqrt\delta\s_{p}](0,x_0)
=\delta[\s_{i},\s_{p}](0,x_0);
$$
\item
for $l=(p-1)d+i$ with  $p,i\in\{1,\dots,d\}$ and $p= i$ then
$$
A_{l,\delta}(0,x_0) =\sqrt\delta\,\s_{i}(0,x_0).
$$
\end{itemize}
Under \eqref{hor-intro}, the rank of $A_\delta(0,x_0)$ is equal to $n$, hence the following norm is well defined:
$$
|\xi|_{A_\delta(0,x_0)}=\<(A_\delta(0,x_0)A_\delta(0,x_0)^T)^{-1}\xi,\xi\>^{1/2},\quad \xi\in\R^n,
$$
where the supscript $T$ denotes the transpose and $\<\cdot,\cdot\>$ stands for the standard scalar product. We prove in \cite{BCP2} that the metric
given by this norm is locally equivalent with the control distance $d_{c}$
(the Carath\'eodory distance) which is usually used in this framework. We denote by $p_{\delta}(x_0,\cdot)$ the density of the solution to \eqref{equation-intro} at time $\delta$.  Under \eqref{hor-intro} and assuming suitable hypotheses on the boundedness and sublinearity of the coefficients $b$ and $\sigma_j$, $j=1,\ldots, d$ (see Assumption \ref{assumption1} for details), we prove the following result (recall $\dim\,\sigma(0,x_0)$ given in \eqref{dim}):
\begin{description}
\item[]
\textsc{[lower bound]}
there exist positive constants $r,\d_*,C$ such that for every $\d\leq \d_*$ and for every $y$ with $|y- x_0-b(0,x_0)\d|_{A_\d(0,x_0)}\leq r$ one has
\[
p_\delta(x_0,y)\geq \frac{1}{C \d^{n-\frac{\dim\,\sigma(0,x_0)}{2} }};
\]
\item []
\textsc{[upper bound]}
for any $p>1$, there exists a positive constant $C$ such that for every $\d\leq 1$ and for every $y\in \R^n$ one has
\[
p_\delta(x_0,y)\leq  \frac{1}{\d^{n-\frac{\dim\,\sigma(0,x_0)}{2}}
} \frac{C}{1+|y-x_0|_{A_\d(0,x_0)}^p}.
\]
\end{description}
This is stated in Theorem \ref{main-th}, where an exponential upper bound is achieved as well, provided that stronger boundedness assumptions on the diffusion coefficients hold (see Assumption \ref{assumption1bis}).

In the context of a degenerate diffusion coefficient which fulfills a strong H\"ormander condition, the main result in this direction is due to Kusuoka and Stroock. In the celebrated paper \cite{KusuokaStroock:87}, they prove the following two-sided Gaussian bounds: there exists a constant $M\geq 1$ such that
\be{ksintro}
\begin{split}
&\frac{1}{M |B_{d_c}(x_0,\delta^{1/2})|}\exp\left( -\frac{M d_c(x_0,y)^2}{\delta}\right)\\
&\quad \quad \quad \leq p_\delta(x_0,y)
\leq \frac{M}{|B_{d_c}(x_0,\delta^{1/2})|}\exp\left( -\frac{d_c(x_0,y)^2}{M \delta}\right)
\end{split}
\ee
where $\delta\in (0,1]$, $x_0,y\in \R^n$, $B_{d_c}(x,r)=\{y\in \R^n: d_c(x,y)<r\}$, $d_c$ denoting the control (Carath\'eodory) distance, and $|B_{d_c}(x,r)|$ stands for the Lebesgue measure of $B_{d_c}(x,r)$. It is worth to be said that \eqref{ksintro} holds under special hypotheses: in \cite{KusuokaStroock:87} it is assumed that the coefficients do not depend on the time variable and that $b(x)=\sum_{j=1}^d \a_i \s_i(x)$, with $\a_i\in C_b^\infty(\R^n)$ (i.e. the drift is generated by the vector fields of the diffusive part, which is a quite restrictive hypothesis).
 Other
celebrated estimates for the heat kernel under strong H\"{o}rmander
condition are provided in \cite{BenArousLeandre:91, BenArousLeandreII:91}.
The subject has also been widely studied by analytical methods - see
for example \cite{Jerison1987} and \cite{Sanchez-Calle1984}.  We stress that these are asymptotic results, whereas we prove estimate for a finite, positive and fixed time. In \cite{pigato:14}, \cite{DelarueMenozzi:10}, non-isotropic norms similar to $|\cdot|_{A_\d(0,x_0)}$ are used to
provide density estimates for SDEs under H\"ormander conditions of weak type. We also refer to \cite{CattiauxMesnager:02}, which considers the existence of the density for SDEs with time dependent coefficients, under very weak regularity assumption.

The paper \cite{BCP2} will follow the present work, considering related results from a ``control'' point of view, discussing in particular tube estimates and the connection with the control-Carath\'eodory distance.
Tube estimates are estimates on the probability that an It\^o process remains around a deterministic path up to a given time. These will be obtained from a concatenation of the short-time density estimates presented here.
Then we will consider, as in \cite{KusuokaStroock:87}, $b(t,x)=b(x)$ and $\s(t,x)=\s(x)$. Defining the semi distance $d$ via: $d(x,y) < \sqrt{\delta}$ if and only if $|x-y|_{A_\delta(x)}<1$, we will prove in \cite{BCP2} the local equivalence of $d$ and $d_c$. This will give a rewriting of the upper/lower estimates of the density in terms of the control distance as well.

The paper is organized as follows. In Section \ref{settings} we set-up the framework and give the precise statement of our main result (Theorem \ref{main-th}). The proof is split in two sections: Section \ref{lower}, which is devoted to the lower bound, and Section \ref{upper}, in which we deal with the upper bound. The main tool we are going to use is given by the estimates of localized densities which have been developed in \cite{BC14}.  These need to use techniques from Malliavin calculus, so we briefly report in Appendix \ref{app-mall} all these arguments. But in order to set-up our program we also require some other facts, which have been collected in other appendices. First, we use a key decomposition of the solution $X_\delta$ to \eqref{equation-intro} at a small (but not asymptotic) time $\delta$ (see Section \ref{lower-key}), and we postpone the proof in Appendix \ref{app-dec}. This decomposition allows us to work with a random variable whose law, conditional to a suitable $\sigma$-algebra, is Gaussian, and in Appendix \ref{app-supp} we study some useful support properties that are applied to our case. Moreover, since the key-decomposition brings to handle a perturbed Gaussian random variable, in Appendix \ref{sectioninversefunction} we prove density estimates via local inversion for such kind of random variables.

\section{Notations and main results}\label{settings}

We need to recall some notations. For $f,g:\R^{+}\times \R^{n}\rightarrow \R^{n}$ we define the directional derivative (w.r.t. the space variable $x$) $\partial_{g}f(t,x)=\sum_{i=1}^{n}g^{i}(t,x)\partial _{x_{i}}f(t,x)$, and  we recall that the Lie bracket (again w.r.t. the space variable)
is defined as $[g,f](t,x)=\partial _{g}f(t,x)-\partial _{f}g(t,x)$.
Let $M\in \mathcal{M}_{n\times m}$ be a matrix with full row rank. We write $M^T$ for the transposed matrix, and $MM^T$ is invertible. We denote by $\l_*(M)$ (respectively $\l^*(M))$ the smallest (respectively the largest) singular value of $M$.
We recall that singular values are the square roots of the eigenvalues of $MM^T$, and that, when $M$ is symmetric, singular values coincide with the absolute values of the eigenvalues of $M$. In particular, when $M$ is a covariance matrix, $\l_*(M)$ and $\l^*(M)$ are the smallest and the largest eigenvalues of $M$.

We consider the following norm on $\R^{n}$:
\be{Not4}
\left\vert y\right\vert _{M}=\sqrt{\left\langle (MM^T)^{-1}y,y\right\rangle}.
\ee
Hereafter, $\alpha =(\alpha _{1},...,\alpha _{k})\in \{1,...,n\}^{k}$ represents a multi-index with length $\left\vert \alpha \right\vert =k$ and $%
\partial _{x}^{\alpha }=\partial _{x_{\alpha _{1}}}...\partial _{x_{\alpha
_{k}}}$. We allow the case $k=0$, giving $\alpha=\emptyset$, and $\partial_x^\alpha f= f$.
Finally, for given vectors $v_1,\ldots,v_n\in \R^m$, we define $\langle v_1,\ldots,v_n\rangle \subset \R^{m}$ the vector space spanned by $v_1,\ldots,v_n$.

Let $X$ denote the process in $\R^n$ already introduced in \eqref{equation-intro}, that is
\be{equation}
dX_{t}=\sum_{j=1}^{d}\s
_{j}(t,X_{t})\circ dW_{t}^{j}+b(t,X_{t})dt,\quad \quad X_0=x_0,
\ee
$W$ being  a standard Brownian motion in $\R^d$. We suppose the diffusion coefficients fulfill the following requests:

\bas{assumption1}
There exists a constant $\kappa>0$ such that, $\forall t\in [0,1],\,\forall x \in \R^n$:
\begin{align*}
&\sum_{j=1}^d |\s_j(t,x)| + |b(t,x)| +  \sum_{j=1}^d  \sum_{0\leq |\a| \leq 2}
|\partial_x^{\a}\partial_t\s_j(t,x)|
\leq  \kappa (1+|x|)\\
&\sum_{j=1}^d \sum_{1\leq |\a| \leq 4}
|\partial_x^{\a} \s_j(t,x)|+\sum_{1\leq |\a| \leq 3}
|\partial_x^{\a} b(t,x)|
\leq \kappa
\end{align*}
\eas
Remark that Assumption \ref{assumption1} ensures the strong existence and uniqueness of the solution to \eqref{equation}. We do not assume here ellipticity but a non degeneracy of H\"ormander type. In order to do this, we need to introduce the $n\times d^2$ matrix $A(t,x)$ defined as follows. We set $m=d^2$ and define the function
\be{lip}
l(i,p)=(p-1)d+i\in\{1,\dots,m\},\quad p,i\in\{1,\dots,d\}.
\ee
Notice that $l(i,p)$ is invertible.  For $l=1,\ldots,m$, we set the (column) vector field $A_{l}(t,x)$ in $\R^n$ as follows:
\be{Al}
\begin{split}
A_{l}(t,x) &=[\s_{i},\s_{p}](t,x) \quad \mbox{if}\quad l=l(i,p) \quad \mbox{with}\quad i\neq p, \\
&= \s_{i}(t,x)\quad \mbox{if}\quad l=l(i,p) \quad \mbox{with}\quad i= p
\end{split}
\ee
and we set $A(t,x)$ to be the $n\times m$ matrix whose columns are given by $A_1(t,x),\ldots,A_m(t,x)$:
\be{Alucia}
A(t,x)=[A_1(t,x),\ldots,A_m(t,x)].
\ee
$A(t,x)$ can be interpreted as a directional matrix. We denote by $\lambda (t,x)$ the smallest singular value of $A(t,x)$, i.e.
\be{Not9'}
\lambda (t,x)^2=
\l_*(A(t,x))^2= \inf_{\left\vert \xi \right\vert =1}\sum_{i=1}^{m}\left\langle
A_{i}(t,x),\xi \right\rangle ^{2}.
\ee
In this paper, we assume the following non degeneracy condition. We write it in a ``time dependent way'' because this is  useful in \cite{BCP2}, which represents the second part of the present article. In fact, we use here just $A(0,x_0)$ and $\l(0,x_0)$, whereas in \cite{BCP2} we consider $A(t,x_t)$ and $\l(t,x_t)$, $x_t$ denoting a skeleton path.

\bas{assumption2}
Let $x_0$ denote the starting point of the diffusion $X$ solving \eqref{equation}. We suppose that
\[
\lambda (0,x_0)>0.
\]
\eas

Notice that Assumption \ref{assumption2} is actually equivalent to require that the first order H\"ormander condition holds in the starting point $x_0$, i.e. the vector fields $\sigma _{i}(0,x_0)$, $[\sigma_{j},\sigma _{p}](0,x_0)$, as $i,j,p=1,...,d$, span the whole $\R^{n}$.

\smallskip

We define now the $m\times m$ diagonal scaling matrix $\mathcal{D}_\delta$ as
\[
\begin{split}
(\mathcal{D}_\delta)_{l,l}&=\delta \quad \mbox{if}\quad l=l(i,p) \quad \mbox{with}\quad i\neq p, \\
&= \sqrt{\delta} \quad \mbox{if}\quad l=l(i,p) \quad \mbox{with}\quad i= p
\end{split}
\]
and the scaled directional matrix
\be{Adlucia}
A_\delta(t,x)=A(t,x)\mathcal{D}_\delta.
\ee
Notice that the $l$th column of the matrix $A_\delta(t,x)$ is given by $\sqrt{\delta}\s_{i}(t,x)$ if $l=l(i,p)$ with $ i= p$ and by $\delta[\s_{i},\s_{p}](t,x)=[\sqrt{\delta}\s_{i},\sqrt{\delta}\s_{p}](t,x)$ if $i\neq p$. Therefore, $A_\delta(t,x)$ is the matrix given in \eqref{Alucia} when the original diffusion coefficients $\sigma_j(t,x)$, $j=1,\ldots,d$, are replaced by $\sqrt {\delta}\sigma_j(t,x)$, $j=1,\ldots,d$.

This matrix and the associated norm $|\cdot|_{A_\d(0,x_0)}$ are the tools that allow us to account of the different speeds of propagation of the diffusion: $\sqrt{\d}$ (diffusive scaling) in the direction of $\s$ and $\d$ in the direction of the first order Lie Brackets.
In particuar, straightforward computations easily give that
\be{Norm3}
\frac{1}{\sqrt{\delta}\l^*(A(t,x))}\left\vert y\right\vert \leq
\left\vert y\right\vert _{A_{\delta}(t,x)}\leq \frac{1}{\delta\l_*(A(t,x))}%
\left\vert y\right\vert ,
\ee
We also consider the following assumption, as a stronger version of Assumption \ref{assumption1} (morally we ask for boundedness instead of sublinearity of the coefficients, in the spirit of Kusuoka-Stroock estimates in \cite{KusuokaStroock:87}).
\bas{assumption1bis}
There exists a constant $\kappa>0$ such that for every  $t\in [0,1]$ and $x \in \R^n$ one has
\begin{align*}
\sum_{0\leq |\a| \leq 4} \Big[
\sum_{j=1}^d  |\partial_x^{\a} \s_j(t,x)|
+ |\partial_x^{\a} b(t,x)|
+ |\partial_x^{\a}\partial_t\s_j(t,x)| \Big]
\leq \kappa.
\end{align*}
\eas

The aim of this paper is to prove the following result:
\bt{main-th}
Let Assumption \ref{assumption1} and \ref{assumption2} hold. Let $p_{X_t}$ denote the density of $X_t$, $t>0$, with the starting condition $X_0=x_0$. Then the following holds.
\ben
\item
  There exist positive constants $r,\d_*,C$ such that for every $\d\leq \d_*$ and for every $y$ such that $|y- x_0-b(0,x_0)\d|_{A_\d(0,x_0)}\leq r$,
\[
\frac{1}{C \d^{n-\frac{\dim\langle \s(0,x_0) \rangle}{2} }} \leq
p_{X_\d}(y).
\]
\item For any $p>1$, there exists a positive constant $C$ such that for every $\d\leq 1$ and for every $y\in \R^n$
\[
p_{X_\d}(y)\leq  \frac{1}{\d^{n-\frac{\dim\langle \s(0,x_0) \rangle}{2}}
} \frac{C}{1+|y-x_0|_{A_\d(0,x_0)}^p}.
\]
\item If also Assumption \ref{assumption1bis} holds (boundedness of coefficients) there exists a constant $C$ such that for every $\d\leq 1$ and for every $y\in \R^n$.
\[
p_{X_\d}(y)\leq  \frac{C}{\d^{n-\frac{dim \langle \s(0,x_0) \rangle}{2}}} \exp\big(- \frac{1}{C}|y-x_0|_{A_\d(0,x_0)}\big).
\]
\een
Here $\dim\langle \s(0,x_0) \rangle$ denotes the dimension of the vector space spanned by $\sigma_1(0,x_0),\ldots,$ $\sigma_d(0,x_0)$.
\et

\br{1}
It might appear contradictory that the lower estimate (1) in Theorem \ref{main-th} is centered in $x_0+\d b(x_0)$,
whereas the upper estimates are centered in $x_0$. In fact, this is important  only for the lower bound, the upper bounds (2) and (3) holding true either if we write
$|y-x_0-\d b(x_0)|_{A_\d(0,x_0)}$ or $|y-x_0|_{A_\d(0,x_0)}$ (see next Remark \ref{3}).
\er

\begin{remark}
As already mentioned, the two sided bound \eqref{ksintro} by Kusuoka and Stroock \cite{KusuokaStroock:87} is proved under a strong H\"ormander condition of any order, but the drift coefficient must be generated by the vector fields of the diffusive part, and the diffusion coefficients $b$ and $\sigma_j$, $j=1,\ldots,d$, must not depend on time. Here, on the contrary, we allow for a general drift and time dependence in the coefficients, but we consider only first order Lie Brackets. Moreover, in assumption \ref{assumption1}, we also relax the hypothesis of bounded coefficients. Anyways, the two estimates are strictly related, since  our matrix norm is locally equivalent to the Carath\'eodory distance -- this is proved  \cite{BCP2} (see Section 4 therein).
\er

\begin{remark}
Our main application is developed in \cite{BCP2}, which is the second part of this paper  and concerns tube estimates. To this aim, we are mostly interested in the diagonal estimates, that is, around $x_0+\d b(0,x_0)$. In particular, what we need is the precise exponent
$n-\dim \langle \s(0,x_0) \rangle/2$, which accounts for the time-scale of the heat kernel when $\d$ goes to zero. However, our results are not asymptotic, but hold uniformly for $\d$ small enough. This is crucial for our application to tube estimates, and this is also a main difference with the estimates in \cite{BenArousLeandre:91,BenArousLeandreII:91}.
\er
\begin{remark}
The upper bounds in (2) and (3) of Theorem \ref{main-th} give also the tail estimates, which are exponential if we assume the boundedness of the coefficients, polynomial otherwise.
\end{remark}

The proof of Theorem \ref{main-th} is long, also different according to the lower or upper estimate, and we proceed by organizing two sections where such results will be separately proved. So, the lower estimate will be discussed in Section \ref{lower} and proved in Theorem \ref{lower-th}, whereas Section \ref{upper} and Theorem \ref{upper-th} will be devoted to the upper estimate.

\section{Lower bound}\label{lower}

We study here the lower bound for the density of $X_\delta$.

\subsection{The key-decomposition}\label{lower-key}
We start with the decomposition of the process that will allow us to produce the lower bound in short (but not asymptotic) time.

We first use a development in stochastic Taylor series of order two of the diffusion process $X$ defined through \eqref{equation}. This gives
\begin{equation}
X_{t}=x_0+Z_{t}+b(0,x_0)t+R_{t}  \label{D1bis}
\end{equation}%
where
\begin{equation}\label{Decomp0}
\begin{array}{l}
\displaystyle
Z_t=\sum_{i=1}^{d}a_{i}W_{t}^{i}+\sum_{i,j=1}^{d}a_{i,j}%
\int_{0}^{t}W_{s}^{i}\circ dW_{s}^{j}  \smallskip\\
\mbox{with $a_{i}=\sigma _{i}(0,x_{0})$, $a_{i,j}=\partial _{\sigma _{i}}\sigma _{j}(0,x_{0})$}
\end{array}
\end{equation}%
and
\begin{align}
R_{t}=& \sum_{j,i=1}^{d}\int_{0}^{t}\int_{0}^{s}(\partial _{\sigma
_{i}}\sigma _{j}(u,X_{u})-\partial _{\sigma _{i}}\sigma _{j}(0,x_0))\circ
dW_{u}^{i}\circ dW_{s}^{j}  \label{D2bis} \\
& +\sum_{i=1}^{d}\int_{0}^{t}\int_{0}^{s}\partial _{b}\sigma
_{i}(u,X_{u})du\circ
dW_{s}^{i}+\sum_{i=1}^{d}\int_{0}^{t}\int_{0}^{s}\partial _{u}\sigma
_{j}(u,X_{u})du\circ dW_{s}^{i}  \notag \\
& +\sum_{i=1}^{d}\int_{0}^{t}\int_{0}^{s}\partial _{\sigma
_{i}}b(u,X_{u})\circ dW_{u}^{i}ds+\int_{0}^{t}\int_{0}^{s}\partial
_{b}b(u,X_{u})duds.  \notag
\end{align}%
Since $R_t=\cal{O}(t^{3/2})$, we expect the behavior of $X_t$ and $Z_t$ to be somehow close for small values of $t$. Our first goal is to give a decomposition for $Z_t$ in \eqref{Decomp0}. We start introducing some notation. We fix $\delta >0$ and set
$$
s_{k}(\delta )=\frac{k}{d}\delta,\quad k=1,\ldots,d.
$$
We now consider the following random variables: for $i,k=1,\ldots,d$,
\be{delta1}
\Delta _{k}^{i}(\delta ,W)=W_{s_{k}(\delta )}^{i}-W_{s_{k-1}(\delta
)}^{i},\quad \Delta _{k}^{i,j}(\delta ,W)=\int_{s_{k-1}(\delta
)}^{s_{k}(\delta )}(W_{s}^{i}-W_{s_{k-1}}^{i})\circ dW_{s}^{j}.
\ee%
Notice that $\Delta _{k}^{i,j}(\delta ,W)$ is the Stratonovich integral, but
for $i\neq j$ it coincides with the It\^o integral. When no confusion is
possible we use the short notation $s_{k}=s_{k}(\delta ),\Delta
_{k}^{i}=\Delta _{k}^{i}(\delta ,W),\Delta _{k}^{i,j}=\Delta
_{k}^{i,j}(\delta ,W).$  We also denote the random vector $\Delta(\d,W)$ in $\R^m$
\be{deltal}
\begin{split}
\Delta_{l}(\d,W) &=\Delta_p^{i,p}(\d,W) \quad \mbox{if $l=l(i,p)$ with $i\neq p$}, \\
&= \Delta_p^p(\d,W) \quad \mbox{if $l=l(i,p)$ with $i=p$}.
\end{split}
\ee
(recall $l(i,p)$ in \eqref{lip}). Moreover, with $\sum_{l>p}^{d}=\sum_{p=1}^{d}\sum_{l=p+1}^{d}$, we define
\begin{equation}  \label{Decomp2}
\begin{array}{l}
V(\d,W) =
\displaystyle\sum_{p=1}^d\Big[
\sum_{i\neq p}\Delta^{i}_{p} +
\sum_{i\neq j,i\neq p,j\neq
p}a_{i,j}\Delta _{p}^{i,j}+\sum_{l=p+1}^{d}\sum_{i\neq p}\sum_{j\neq
l} a_{i,j}\Delta _{l}^{j}\Delta _{p}^{i}+\frac{1}{2}\sum_{i\neq
p} a_{i,i}\left\vert \Delta _{p}^{i}\right\vert ^{2}\Big]; \\
\varepsilon _{p}(\delta ,W) = \displaystyle \sum_{l>p}^{d} \sum_{j\neq
l} a_{p,j}\Delta _{l}^{j}+\sum_{p>l}^{d}\sum_{j\neq l} a_{j,p}\Delta
_{l}^{j}+\sum_{j\neq p} a_{p,j}\Delta _{p}^{j},\quad p=1,...,d; \\
\eta _{p}(\delta ,W) = \displaystyle\frac{1}{2}a_{p,p}\left\vert \Delta
_{p}^{p}\right\vert ^{2}+ \sum_{l>p}^{d} a_{p,l}\Delta _{l}^{l}\Delta
_{p}^{p}+\Delta _{p}^{p}\varepsilon _{p}(\delta,W),\quad p=1,...,d.%
\end{array}%
\end{equation}
We have the following decomposition:
\bl{decZ-l}
Let $\Delta(\delta,W)$ and $A(0,x_0)$ be given in \eqref{deltal} and \eqref{Alucia} respectively. One has
\be{decZ}
Z_\delta=
V(\d,W)+A(0,x_0) \Delta(\d,W)+\eta(\delta ,W),
\ee
where $V(\delta,W)$ is given in \eqref{Decomp2} and $\eta (\delta ,W)=\sum_{p=1}^{d}\eta _{p}(\delta ,W)$, $\eta_p(\delta ,W)$ being given in \eqref{Decomp2}.

\el

The proof of Lemma \ref{decZ-l} is quite long, so it is postponed to Appendix \ref{app-dec}.

\br{111}
The reason of this decomposition is the following. We split the
time interval $(0,\delta )$ in $d$ sub intervals of length $\delta /d.$ We also split the Brownian motion in corresponding increments: $%
(W_{s}^{p}-W_{s_{k-1}}^{p})_{s_{k-1}\leq s\leq s_{k}},p=1,...,d.$ Let us fix
$p.$ For $s\in (s_{p-1},s_{p})$ we have the processes $%
(W_{s}^{i}-W_{s_{p-1}}^{i})_{s_{p-1}\leq s\leq s_{p}},i=1,...,d.$ Our idea
is to settle a calculus which is based on $W^{p}$ and to take conditional
expectation with respect to $W^{i},i\neq p.$ So $%
(W_{s}^{i}-W_{s_{p-1}}^{i})_{s_{p-1}\leq s\leq s_{p}},i\neq p$ will appear
as parameters (or controls) which we may choose in an appropriate way. The random variables on which the calculus is based are $\Delta
_{p}^{p}=W_{s_{p}}^{p}-W_{s_{p-1}}^{p}$ and $\Delta
_{p}^{i,p}=\int_{s_{p-1}}^{s_{p}}(W_{s}^{i}-W_{s_{p-1}}^{i})dW_{s}^{p},j\neq
p.$ These are the r.v. that we have emphasized in the
decomposition of $Z_\delta.$ Notice that, conditionally to the controls $%
(W_{s}^{i}-W_{s_{p-1}}^{i})_{s_{p-1}\leq s\leq s_{p}},i\neq p,$ this is a
centered Gaussian vector and, under appropriate hypothesis on the controls
this Gaussian vector is non degenerate (we treat in section \ref{app-supp} the
problem of the choice of the controls).
In order to handle the term $\Delta_{p}^{p,i}=\int_{s_{p-1}}^{s_{p}}(W_{s}^{p}-W_{s_{p-1}}^{p})dW_{s}^{i}.$
we use the identity $\Delta _{p}^{p,i}=\Delta _{p}^{i}\Delta _{p}^{p}-\Delta
_{p}^{i,p}$.

\er

We now emphasize the scaling in $\delta$ in the random vector $\Delta(\delta,W)$.
We define
$B_{t}=\delta ^{-1/2}W_{t\delta }$
and denote
\be{New3}
\begin{split}
\Th_l &=\frac{1}{\d }\Delta
_{p}^{i,p}=\int_{\frac{p-1}{d}}^{\frac{p}{d}}(B_{s}^{i}-B_{\frac{p-1}{d}}^{i})dB_{s}^{p}\quad \mbox{if $l=l(i,p)$ with $i\neq p$}, \\
&= \frac{1}{\sqrt{\d }}\Delta_{p}^{p}=B_{\frac{p}{d}}^{p}-B_{\frac{p-1}{d}}^{p} \quad \mbox{if $l=l(i,p)$ with $i=p$},
\end{split}
\ee
$l(i,p)$ being given in \eqref{lip}. For $p=1,...,d$ we denote with $\Theta_{(p)}$ the $p$th block of $\Theta$ with length $d$, that is
$$
\Theta _{(p)}=(\Theta _{(p-1)d+1},...,\Theta
_{pd}),
$$
so that $\Theta=(\Theta _{(1)},\ldots,\Theta _{(d)})$.
We will also denote
\be{New3bis}
l(p)=l(p,p)=(p-1)d+p\quad\mbox{and}\quad\Th_{l(p)}=\frac{1}{\sqrt{\d }
}\Delta_{p}^{p}.
\ee

Consider now the $\s$ field
\begin{equation}
\mathcal{G}:=\sigma (W_{s}^{j}-W_{s_{p-1}(\delta )}^{j},s_{p-1}(\delta )\leq
s\leq s_{p}(\delta ),p=1,...d,j\neq p).  \label{New4}
\end{equation}%
Then conditionally to $\mathcal{G}$ the random variables $%
\Theta _{(p)},p=1,...,d$ are independent centered Gaussian $d$
dimensional vectors and the covariance matrix $Q_{p}$ of $\Theta _{(p)}$
is given by
\begin{equation}\label{Qp}
\begin{array}{rl}
Q_{p}^{p,j}&
\displaystyle
=Q_{p}^{j,p}=\int_{\frac{p-1}{d}}^{\frac{p}{d}}\left(B_{s}^{j}-B_{\frac{p-1}{d}}^{j}\right)ds,\quad j\neq p,\smallskip\\
Q_{p}^{i,j}&
\displaystyle
=\int_{\frac{p-1}{d}}^{\frac{p}{d}}\left(B_{s}^{j}-B_{\frac{p-1}{d}}^{j}\right)\left(B_{s}^{i}-B_{\frac{p-1}{d}}^{i}\right)ds,\quad j\neq
p,i\neq p,  \smallskip\\
Q_{p}^{p,p}& =\frac{1}{d}.
\end{array}
\end{equation}
It is easy to see that $\det Q_{p}\neq 0$ almost surely.
It follows that conditionally to $\mathcal{G}$ the random variable $\Theta
=(\Theta _{(1)},...,\Theta _{(d)})$ is a centered $m=d^{2}$ dimensional
Gaussian vector. Its covariance matrix $Q$ is a block-diagonal matrix built with $Q_{p},\,p=1,\dots,d$:
\begin{equation}\label{Q}
Q=
\left(
\begin{array}{ccc}
Q_1 &       & \\
    &\ddots & \\
    &       & Q_d
\end{array}
\right)
\end{equation}
In particular $\det Q=\prod_{p=1}^{d}\det Q_{p}\neq 0$
almost surely, and
$\lambda _{\ast }(Q)=\min_{p=1,...,d}\lambda _{\ast }(Q_p)$. We also have $\lambda ^{\ast }(Q)=\max_{p=1,...,d}\l^{\ast }(Q_p)$. We will need to work on subsets where we have a quantitative control of this quantities, so we will come back soon on these covariance matrices. But let us show now how one can rewrite decomposition \eqref{decZ} in terms of the random vector $\Theta$. As a consequence, the scaled matrix $A_\delta=A_\delta(0,x_0)$ in \eqref{Adlucia} will appear.

We denote by $A_{\delta }^{i}\in \R^{m},i=1,...,n$ the rows of the matrix $%
A_{\delta }$. We also denote $S=\langle A_{\delta }^{1},...,A_{\delta
}^{n}\rangle \subset \R^{m}$ and $S^{\bot }$ its orthogonal. Under Assumption \ref{assumption2}  the columns of $A_{\delta }$ span $\R^{n}$
so the rows $A_{\delta }^{1},...,A_{\delta
}^{n}$ are linearly independent and $S^{\bot }$ has
dimension $m-n$. We take $\Gamma _{\delta }^{i},i=n+1,...,m$ to be an
orthonormal basis in $S^{\bot}$ and we denote $\Gamma _{\delta
}^{i}=A_{\delta }^{i}(0,x_0)$ for $i=1,...,n$.
We also denote $\underline{\G}_{\d}$ the $(m-n) \times m $ matrix with rows  $\G_\d^i,i=n+1, \dots, m$.
Finally we denote by $\Gamma
_{\delta }$ the $m\times m$ dimensional matrix with rows $\Gamma _{\delta
}^{i},i=1,...,m$.
Notice that
\begin{equation}
\Gamma _{\delta }\Gamma _{\delta }^T=\left(
\begin{tabular}{ll}
$A_{\delta }A_{\delta }^T(0,x_0)$ & $0$ \\
$0$ & $\mathrm{Id}_{m-n}$%
\end{tabular}%
\right)  \label{New7}
\end{equation}%
where $\mathrm{Id}_{m-n}$ is the identity matrix in $\R^{m-n}.$ It follows that for a
point $y=(y_{(1)},y_{(2)})\in \R^{m}$ with $y_{(1)}\in \R^{n},y_{(2)}\in
\R^{m-n}$ we have%
\be{GammaA}
\left\vert y\right\vert _{\Gamma _{\delta }}^{2}=\left\vert
y_{(1)}\right\vert _{A_{\delta }(0,x_0)}^{2}+\left\vert y_{(2)}\right\vert ^{2}
\ee
where we recall that $\left\vert y\right\vert _{\Gamma _{\delta
}}^{2}=\left\langle (\Gamma _{\delta }\Gamma _{\delta }^T)^{-1}y,y\right\rangle$.
For $a\in \R^{m}$ we define the immersion
%$J_{a}:\R^{n}\rightarrow \R^{m}$ by
\begin{equation}
J_{a}:\R^{n}\rightarrow \R^{m},\quad
(J_{a}(z))_{i}=z_{i},i=1,...,n\quad\mbox{and}\quad (J_{a}(z))_{i}=\left\langle \Gamma _{\delta
}^{i},a\right\rangle ,i=n+1,...,m.  \label{New9}
\end{equation}%
In particular $J_{0}(z)=(z,0,...,0)$ and
\begin{equation}
\left\vert J_{0}z\right\vert _{\Gamma _{\delta }}=\left\vert z\right\vert
_{A_{\delta }(0,x_0)}.  \label{New9a}
\end{equation}%
Finally we denote%
\begin{equation}\label{New10}
\begin{array}{rcl}
V_{\omega } &=&V(\delta,W) \smallskip\\
\eta _{\omega }(\Theta ) &=&
\displaystyle
\sum_{p=1}^{d}\left(\frac{a_{p,p}}{2}\delta \Theta
_{l(p)}^{2}+\delta ^{1/2}\Theta _{l(p)}\varepsilon
_{p}(\delta,W)+\sum_{q>p}^{d}a_{p,q}\delta \Theta _{l(q)}\Theta _{l(p)}\right)
\end{array}
\end{equation}%
where $V(\delta,W)$ and $\varepsilon_{p}(\delta ,W)$ are
defined in (\ref{Decomp2}) and $\Theta _{l(p)}$ is given in \eqref{New3bis}. We notice that $\eta _{\omega }(\Theta )=\sum_{p=1}^{d}\eta _{p}(\delta
,W)$, $\eta_{p}(\delta ,W)$ being defined in (\ref{Decomp2}).
We also remark that both $V(\delta,W)$ and $\varepsilon_{p}(\delta ,W)$ are $\mathcal{G}$-measurable, so
\eqref{New10} stresses a dependence on $\omega$ which is $\mathcal{G}$-measurable and a dependence on the random vector $\Theta$ whose conditional law w.r.t. $\mathcal{G}$ is Gaussian.

Now the decomposition \eqref{decZ} may be written as
\[
Z_\delta=V_{\omega }+A_{\delta }(0,x_0)\Theta +\eta _{\omega }(\Theta ).
\]
We embed this relation in $\R^{m}$ and obtain%
\[
J_{\Theta }(Z_\delta)=J_{0}(V_{\omega })+\Gamma _{\delta }\Theta
+J_{0}(\eta _{\omega }(\Theta )).
\]
We now multiply with $\Gamma _{\delta }^{-1}$: setting
\be{deftilde}
\widetilde{Z}=\Gamma _{\delta }^{-1}J_{\Theta }(Z_\delta),\quad \widetilde{V}_{\omega }=\Gamma _{\delta }^{-1}J_{0}(V_{\omega }),\quad
\widetilde{\eta }_{\omega }(\Theta )=\Gamma _{\delta
}^{-1}J_{0}(\eta _{\omega }(\Theta ))
\ee
and
\be{defG}
G=\Theta +\widetilde{\eta }_{\omega}(\Theta),
\ee
we get
\be{New11}
\widetilde{Z}=\widetilde{V}_{\omega }+G.
\ee
Notice that, conditionally to $\mathcal{G}$, $\widetilde Z$ is a translation of the random variable $G=\Theta+\widetilde{\eta }_{\omega}(\Theta)$ which is a perturbation of a centred Gaussian random variable. Thanks to this fact, we can to apply the results in Appendix \ref{sectioninversefunction}: we use a local inversion argument in order to give bounds for the conditional density of $\widetilde Z$, which will be used in order to get bounds for the non conditional density. As a consequence, we will get density estimates for $Z_\delta$.

\subsection{Localized density for the principal term $\widetilde Z$}
We study here the density of $\widetilde Z$ in \eqref{New11}, ``around'' (that is, localized on) a suitable set of Brownian trajectories, where we have a quantitative control on the ``non-degeneracy'' (conditionally to $\mathcal{G}$) of the main Gaussian $\Th$.

We denote
\begin{equation}\label{qp}
q_{p}(B)=\sum_{j\neq p}\left\vert B_{\frac{p}{d}}^{j}-B_{\frac{p-1}{d}}^{j}\right\vert
+\sum_{j\neq p,i\neq p,i\neq j}\left\vert
\int_{\frac{p-1}{d}}^{\frac{p}{d}}(B_{s}^{j}-B_{\frac{i-1}{d}}^{j})dB_{s}^{i}\right\vert. %\quad \mbox{ and }\quad q(B)=\sum_{p=1}^{d}q_{p}(B).
\end{equation}
For fixed $\ve,\rho>0$, we define
\begin{equation}\label{def-Lambda}
\begin{array}{l}
\Lambda _{\r,\varepsilon ,p}=\Big\{\det Q_{p}\geq \varepsilon^\r,\sup_{\frac{p-1}{d}\leq t\leq \frac{p}{d}}\sum_{j\neq p}\vert
B_{t}^{j}-B_{\frac{p-1}{d}}^{j}\vert \leq \varepsilon^{-\r},q_{p}(B)\leq
\varepsilon \Big\},\ p=1,\ldots,d\smallskip\\
\Lambda _{\r,\varepsilon}=\cap_{p=1}^d\Lambda _{\r,\varepsilon ,p}.
\end{array}
\end{equation}%
Notice that $\Lambda _{\r,\varepsilon ,p}\in\mathcal{G}$ for every $p=1,\ldots,d$.
By using some results in Appendix \ref{app-supp}, we get the following.
\begin{lemma}\label{lemma-Lambda}
Let $\Lambda _{\r,\varepsilon }$ be as in \eqref{def-Lambda}.
There exist $c$ and $\varepsilon _{\ast }$ such that for every $\varepsilon\leq \varepsilon_\ast$ one has
\begin{equation}
\PR\big(\Lambda _{\r,\varepsilon }\big)\geq c\times \varepsilon ^{\frac{1}{2%
}m(d+1)}.  \label{N9}
\end{equation}%
\end{lemma}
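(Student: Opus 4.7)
The plan is to reduce, by an independence argument across the $d$ sub-intervals, to a per-index small-ball estimate, and then to invoke the support-theoretic results of Appendix~\ref{app-supp}.

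\smallskip
\noindent\textit{Step 1 (factorization via independence).} I would first observe that every quantity entering $\Lambda_{\rho,\varepsilon,p}$ --- the increments $B^j_t-B^j_{(p-1)/d}$, their iterated Stratonovich integrals over $t\in[(p-1)/d,p/d]$, and the Gram-type matrix $Q_p$ --- is measurable with respect to
\[
\mathcal{G}_p:=\sigma\bigl(B^j_t-B^j_{(p-1)/d}:\,t\in[(p-1)/d,p/d],\,j\neq p\bigr).
\]
Since the intervals $[(p-1)/d,p/d]$ are pairwise disjoint, the independent-increments property of $W$ (equivalently of $B$) makes $\mathcal{G}_1,\dots,\mathcal{G}_d$ mutually independent, so that
\[
\PR(\Lambda_{\rho,\varepsilon})=\prod_{p=1}^{d}\PR(\Lambda_{\rho,\varepsilon,p}).
\]

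\smallskip
\noindent\textit{Step 2 (per-$p$ small-ball lower bound).} The core of the argument will be to show that, uniformly in $p\in\{1,\dots,d\}$, there exist $c_0,\varepsilon_*>0$ with
\[
\PR(\Lambda_{\rho,\varepsilon,p})\geq c_0\,\varepsilon^{d(d+1)/2},\qquad\varepsilon\leq\varepsilon_*.
\]
By time-translation and Brownian scaling I would reduce to $p=d$ and to a standard $(d-1)$-dimensional Brownian motion $\widetilde B$ on $[0,1]$. In that picture $\Lambda_{\rho,\varepsilon,p}$ asks for three joint conditions: (i)~a finite family of smooth Wiener functionals ($d-1$ endpoint increments and $(d-1)(d-2)$ iterated integrals) lies in a ball of radius $\varepsilon$; (ii)~a sup-norm bound $\leq\varepsilon^{-\rho}$; (iii)~a polynomial non-degeneracy $\det Q_p\geq\varepsilon^{\rho}$. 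Condition~(ii) holds with probability $\geq 1/2$ for $\varepsilon$ small and can be absorbed in the constant. The joint lower bound for (i)$\cap$(iii) is exactly the kind of support/small-ball estimate proved in Appendix~\ref{app-supp}: one constructs a tube of Brownian paths around an explicit smooth control along which both $q_p(B)$ is small and $\det Q_p$ is bounded below, and one extracts the polynomial probability of such a tube by counting the Gaussian degrees of freedom involved.

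\smallskip
\noindent\textit{Step 3 (combining).} Multiplying the $d$ per-$p$ estimates,
\[
\PR(\Lambda_{\rho,\varepsilon})\geq c_0^{\,d}\,\varepsilon^{d\cdot d(d+1)/2}=c\,\varepsilon^{m(d+1)/2},
\]
with $c:=c_0^{\,d}$ and $m=d^2$, which is the claim.

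\smallskip
\noindent\textit{Main obstacle.} The delicate point is Step~2. The small-ball condition (i) pushes $\widetilde B$ toward zero, while the non-degeneracy (iii) forbids paths too close to zero, so the two requirements are in tension; the control around which the tube is built must therefore be chosen carefully, and $\rho$ has to be tuned large enough (of order $d$) for (iii) to be compatible with (i). This balancing is exactly the contribution of Appendix~\ref{app-supp}; Steps~1 and~3 are then routine independence and power arguments.
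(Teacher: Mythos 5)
Your plan has the same skeleton as the paper's proof: factor $\PR(\Lambda_{\rho,\varepsilon})$ across the $d$ disjoint subintervals by independence, reduce each $\Lambda_{\rho,\varepsilon,p}$ to a standard Brownian motion on $[0,1]$, invoke the support estimate of Appendix~\ref{app-supp}, and multiply. Steps~1 and~3 are exactly the paper's argument. The gap is in Step~2, in how you dispose of the sup-norm constraint~(ii). You write that (ii) ``holds with probability $\geq 1/2$ for $\varepsilon$ small and can be absorbed in the constant,'' but no such absorption is available: the event (ii) is not independent of (i)~and~(iii), and the quantity you are lower bounding is of order $\varepsilon^{d(d+1)/2}\to 0$, so a probability $\geq 1/2$ for (ii) tells you nothing. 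What is actually required is that the probability of the complement of (ii) be of \emph{strictly higher} order than $\varepsilon^{d(d+1)/2}$, and Proposition~\ref{SUPORT3} delivers exactly this: $\PR(\sup_{t\leq 1}|B_t|>\varepsilon^{-\rho})\leq \exp(-1/(C\varepsilon^{2\rho}))$ is superpolynomially small, and the proof of \eqref{Sup2} is a union bound $\PR(\Upsilon_{\rho,\varepsilon})\geq\PR(q(B)\leq\varepsilon)-\PR(\det Q<\varepsilon^{\rho})-\PR(\sup_{t\leq 1}|B_t|>\varepsilon^{-\rho})$, whose leading term comes from positivity of the density of an associated iterated-integral diffusion, while the two subtracted terms are made negligible via negative moments of $\det Q$ (Proposition~\ref{SUPORT2}) plus Chebyshev, and via the exponential estimate. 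Your picture of the appendix as building a single tube along which $q_p(B)$ is small \emph{and} $\det Q_p$ is bounded below does not match this decoupled mechanism, and would in fact be harder to carry out since, as you yourself note, the two requirements pull in opposite directions.

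A lesser issue: ``by time-translation and Brownian scaling'' glosses over a verification the paper actually performs. One introduces $\widehat B_t=\sqrt{d}\,(B_{(p-1+t)/d}-B_{(p-1)/d})$ and must check the \emph{direction} of the comparisons $q_p(B)\leq q(\widehat B)$, $\sup|B^j_s-B^j_{(p-1)/d}|\leq\sup|\widehat B_t|$, and the relation between $\det Q_p$ and $\det Q(\widehat B)$ (which picks up an explicit power of $d$), so that $\Upsilon_{\rho,\varepsilon}(\widehat B)\subset\Lambda_{\rho,\varepsilon,p}$. These inequalities are not automatic. Finally, note that Proposition~\ref{SUPORT3} already packages all three conditions into the single event $\Upsilon_{\rho,\varepsilon}$; invoking it directly lets you drop the separate, and invalid, treatment of~(ii) and close the argument in one stroke.
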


\bpr
We apply here Proposition \ref{SUPORT3}. Let $p\in\{1,\ldots d\}$ be fixed and consider the Brownian motion $\widehat B_t=\sqrt d(B_{\frac{p-1+t}d}-B_{\frac{p-1}d})$. Let $Q(\widehat B)$ be the matrix in \eqref{Q-app}. Up to a permutation of the components of $\widehat B$, we easily get $Q^{p,p}(\widehat B)=d\times Q_p^{p,p}$, $Q^{p,j}(\widehat B)=d^{3/2}\times Q_p^{p,j}$ for $j\neq p$, $Q^{i,j}(\widehat B)=d^2\times Q_p^{i,j}$ for $i\neq p$ and $j\neq p$. Therefore,
$$
\det Q_p=d^{2d-1}\det Q(\widehat B)\geq \det Q(\widehat B).
$$
Let now $q(\widehat B)$ be the quantity defined in \eqref{Sup4}. With $q_p(B)$ as in \eqref{qp}, it easily follows that
$$
q_p(B)\leq  q(\widehat B).
$$
Moreover, $\sup_{t\leq 1}|\widehat B_t|=\sqrt d\, \sup_{\frac{p-1}d\leq s\leq \frac pd}|B_s-B_{\frac {p-1}d}|\geq \sup_{\frac{p-1}d\leq s\leq \frac pd}|B_s-B_{\frac {p-1}d}|$.
As a consequence, with $\Upsilon_{\rho ,\varepsilon }$ the set defined in \eqref{Sup3}, we get
$$
\Upsilon_{\rho ,\varepsilon }(\widehat B)
\subset \Lambda_{\rho, \varepsilon, p}
$$
and by using  \eqref{Sup2},  we may find some constants $c$ and $%
\varepsilon _{\ast }$ such that
$\PR(\Lambda _{\r,\varepsilon ,p})\geq c\varepsilon ^{\frac{1}{2}%
d(d+1)}$, for $\varepsilon \leq \varepsilon _{\ast }$.
This holds for every $p$. Since $\Lambda _{\r,\varepsilon }=\cap _{p=1}^{d}\Lambda _{\r,\varepsilon ,p}$, by using the independence property we get \eqref{N9}.
\epr

Let $Q$ be the matrix in \eqref{Q}. On the set $\Lambda _{\r,\varepsilon }\in \cal{G}$ we have $\det Q=\prod_{p=1}^{d}\det Q_{p}\geq \varepsilon ^{d\r}$. Remark that
\be{complambdanorm}
\frac{\lambda^{\ast }(Q)}{\sqrt{m}}
\leq |Q|_l := \left(\frac{1}{m}\sum_{1\leq i,j\leq m} Q_{i,j}^2 \right)^{1/2}
\leq \lambda^{\ast }(Q).
\ee
For $a>0$ we introduce the following function,
\[
\psi_a(x)=1_{|x|\leq a}+\exp\left( 1-\frac{a^2}{a^2-(x-a)^2} \right)1_{a<|x|<2a},
\]
which is a mollified version of $1_{[0,a]}$.
We can now define our \emph{localization variables}.
\begin{equation}\label{Utilde}
\widetilde{U}_\varepsilon=(\psi_{a_1}(1/\det Q))
\psi_{a_2}(|Q|_l)
\psi_{a_3}(q(B)),\quad\mbox{with}\quad a_1=\ve^{-d\r},\ a_2=\ve^{-2\r}, \ a_3=d\ve
\end{equation}
in which we have set
$$
q(B)=\sum_{p=1}^dq_p(B).
$$
Remark that
$\widetilde{U}_\varepsilon$ is measurable w.r.t. $\cal{G}$.
The following inclusions hold: for every $\varepsilon$ small enough,
\be{N10}
\Lambda _{\r,\varepsilon }
\subset
\Big\{\det Q \geq \varepsilon^{d\r},|Q|_l\leq \ve^{-2\r},q(B)\leq
d \varepsilon \Big\}= \{\widetilde{U}_\varepsilon= 1\}\subset \{\widetilde{U}_\varepsilon\neq 0\}.
\ee
We can consider $\widetilde{U}_\varepsilon$ as a smooth version of the indicator function of $\Lambda_{\r,\ve}$.
We also define, for fixed $r>0$,
\begin{equation}\label{Ubar}
\bar{U}_r=\prod_{i=1}^n \psi_r(\Th_i).
\end{equation}

In order to state a lower estimate for the (localized) density of $\widetilde Z$ in \eqref{New11}, we define the following set of constants:
\be{CC}
\mathcal{C}=\Big\{C>0\,:\,C=\exp\Big({c \Big(\frac{\kappa}{\lambda(0,x_0)}\Big)^{q}}\Big),\ \exists\ c,q>0\Big\}
\ee
and we set
$$
1/\mathcal{C}=\{C>0\,:\,1/C\in\mathcal{C}\}.
$$

\bl{lemmaZ}
Suppose Assumption \ref{assumption1} and \ref{assumption2} both hold. Let $U_{\varepsilon,r}=\widetilde U_\varepsilon\bar U_r$, $\widetilde U_\varepsilon$ and $\bar U_r$ being defined in \eqref{Utilde} and \eqref{Ubar} respectively, with $\rho=\frac 1{8m}$. Set $d\PR_{U_{\varepsilon,r}}=U_{\varepsilon,r}d\PR$ and let $p_{\widetilde{Z},U_{\varepsilon,r}}$ denote  the density of $\widetilde Z$ in \eqref{New11} when we endow $\Omega$ with the measure $\PR_{U_{\varepsilon,r}}$.
Then there exist $C\in \cal{C}$, $\ve,r\in 1/\cal{C}$ such that for $|z|\leq r/2$,
\be{estZ}
p_{\widetilde{Z},U_{\varepsilon,r}}(z)\geq \frac{1}{C}.
\ee
\el

\bpr
STEP 1: lower bound for the localized conditional density given $\mathcal{G}$.

\smallskip

Let $p_{\widetilde{Z},\bar{U}_r|\cal{G}}$ denote the localized density w.r.t. the localization $\bar U_r$ of $\widetilde{Z}$ conditioned to $\cal{G}$, i.e.
\be{defcond}
\E[f(\widetilde{Z})\bar{U}_r|\cal{G}] = \int f(z) p_{\widetilde{Z},\bar{U}_r|\cal{G}}(z)dz,
\ee
for $f$ positive, measurable, with support included in $B(0,r/2)$.
We start proving that there exist $C\in \cal{C}$, $\ve,r \in 1/\cal{C}$ such that, on $\widetilde{U}_\varepsilon\neq 0$, for $|z|\leq r/2$
\[
p_{\widetilde{Z},\bar{U}_r|\cal{G}}(z)\geq \frac{1}{C}.
\]
We recall \eqref{New11}: $\widetilde Z=\widetilde V_\omega+\Theta+\widetilde\eta_\omega(\Theta)$, where $\omega\mapsto\widetilde V_\omega$ and $\omega\mapsto\widetilde\eta_\omega(\cdot)$ are both $\mathcal{G}$-measurable and the conditional law of $\Theta$ given $\mathcal{G}$ is Gaussian. This allows us to use the results in Appendix \ref{sectioninversefunction}. In particular, we are interested in working on the set $\{\widetilde{U}_\varepsilon\neq 0\}\in\mathcal{G}$, so one has to keep in mind that $\omega\in\{\widetilde{U}_\varepsilon\neq 0\}$.

\smallskip

On $\widetilde{U}_\varepsilon\neq 0$, by \eqref{Utilde} and \eqref{complambdanorm} one has $\l^*(Q) \leq 2 \sqrt{m} \ve^{-2\r}$, and
\begin{equation*}
\frac{\varepsilon ^{d\rho } }{2} \leq \det Q\leq \lambda _{\ast }(Q)\lambda ^{\ast}(Q)^{m-1} \leq \lambda _{\ast }(Q) (2 \sqrt{m})^{m-1}\ve ^{-2\rho (m-1)},
\end{equation*}%
and this gives $\lambda _{\ast }(Q)\geq \frac{ \varepsilon ^{3 m \rho } }{(2\sqrt{m})^m} $. So, fixing $\rho=1/(8m)$, for $\ve \leq \ve^*$,
\be{rlh}
\frac{1}{16 m^2}\frac{\l_*(Q)}{\l^*(Q)} \geq
C_m \varepsilon ^{3 m \rho+2\rho } \geq
\ve.
\ee
To apply \eqref{invest} to $G=\Theta +\widetilde{\eta }_{\omega}(\Theta)$ we need to check the hypothesis of Lemma \ref{invfun2}. We are going to use the notation of Appendix \ref{sectioninversefunction}, in particular for $c_*(\widetilde{\eta}_\o,r)$ in \eqref{c*} and $c_i(\widetilde{\eta}_\o)$, $i=2,3$, in \eqref{ci}. Recall that $\widetilde\eta_\omega$ is defined in \eqref{deftilde} through $\eta_\omega$ given in \eqref{New10}. Since the third order derivatives of $\eta_\o$ are null,  we have $c_{3}(\widetilde{\eta }_{\o})=0$. Also, for $i=l(p)$ and $j=l(q)$ we have $\partial _{i,j}\eta _{\o }(\Th )=\d a_{ij}$, otherwise we get $\partial _{i,j}\eta _{\o }(\Th )=0$. So $\left|\partial _{i,j}\eta _{\o}(\Th)\right| \leq \d \sum_{i,j} |a_{i,j}|$. Using \eqref{Norm3} we obtain
\begin{equation*}
\left| \partial _{i,j}\widetilde{\eta }_{\o}(\Th)\right|
=\left| J_{0}(\partial _{i,j}\eta _{\o
}(\Th))\right|_{\G_{\d}}
=\left| \partial_{i,j}\eta_{\o}(\Th)\right|_{A_{\d}}
\leq \frac{\sum_{i,j} |a_{i,j}|}{\lambda(0,x_0)}\leq C\in \cal{C}.
\end{equation*}%
So, with $h_{\eta_\omega}$ as in \eqref{defheta}, we get
\be{c2}
h_{\eta_\omega}=\frac{1}{16 m^2 (c_2(\widetilde{\eta }_{\o})+\sqrt{c_3(\widetilde{\eta }_{\o})})}\geq \frac{1}{C_1}, \quad \exists C_1\in \cal{C}
\ee
We compute now the first order derivatives. For $j\notin \{l(p):p=1,...,d\}$
we have $\partial _{j}\eta _{\omega }=0$ and for $j=l(p)$ we have
\[
\partial_j\eta _{\omega }(\Theta )=\d \sum_{q=p}^{d}a_{p\wedge
q,p\vee q}\Theta _{l(q)}+\sqrt{\delta }\ve_{j}(\delta,W).
\]
So, as above, we obtain $\left| \partial _{j}\widetilde{\eta }_{\o}(\Th)\right|
\leq C (|\Th|+|\ve_j(\delta,W)|/\sqrt{\d})$. Remark now that on $\{\bar{U}_r\neq 0\}$ we have $|\Th|\leq C r$, and on $\{\widetilde{U}_\varepsilon\neq 0\}$ we have $q(B)\leq 2d\ve$, so
\be{ept}
\sum_{j=1}^d |\ve_j(\delta,W)|\leq C\sqrt{\d} q(B) \leq C\sqrt{\d} \ve.
\ee
Therefore
\be{cstar}
c_*(\widetilde{\eta }_{\o},16r)\leq C_2 (r +\ve), \quad \exists C_2\in \cal{C}.
\ee
We also consider the following estimate of $|\widetilde{V}_{\omega }| =|V_{\omega }|_{A_{\delta }}$. First, we rewrite $V_\omega$ as follows:
$$
\begin{array}{c}
\displaystyle
V_{\omega }
=\sum_{p}a_p\mu_p(\delta,W)+\sum_p\psi_p(\delta, W),\quad\mbox{with}\\
\displaystyle
\mu _{p}(\delta ,W)
 =  \displaystyle\sum_{i\neq p}\Delta _{i}^{p}
\mbox{ and }
\psi _{p}(\delta ,W) =
\!\!\!\!\!\!\!\sum_{i\neq j,i\neq p,j\neq
p}\!\!\!\!\!\!a_{i,j}\Delta _{p}^{i,j}+\!\!\sum_{l=p+1}^{d}\sum_{i\neq p}\sum_{j\neq
l} a_{i,j}\Delta _{l}^{j}\Delta _{p}^{i}+\frac{1}{2}\sum_{i\neq
p} a_{i,i}\left\vert \Delta _{p}^{i}\right\vert ^{2}
\end{array}
$$
Using again \eqref{Norm3} we have
\[
\Big|\sum_{p=1}^{d}a_{p}\mu _{p}(\delta ,W)\Big| _{A_{\delta }}=%
\frac{1}{\sqrt{\delta }}\Big| A_{\delta }J_{0}\Big(\sum_{p=1}^{d}\mu _{p}(\delta
,W)\Big)\Big|_{A_{\delta }}\leq \sum_{p=1}^{d}\frac{1}{\sqrt{\delta }}
|\mu_{p}(\delta ,W))|\leq C q(B)
\]
and
\[
\left\vert \psi (\delta ,W)\right\vert _{A_{\delta }}\leq \frac{\left\vert
\psi (\delta ,W)\right\vert}{\delta \sqrt{\lambda (0,x_0)}}
\leq C q(B).
\]
Since $\omega\in\{\widetilde{U}_\varepsilon\neq 0\}$ we get
\be{V}
\begin{split}
|\widetilde{V}_{\omega }| \leq C q(B) \leq C_3 \ve,\quad \exists C_3\in \cal{C}.
\end{split}
\ee
We consider \eqref{V}, and fix $\frac{r}{\ve}=2 C_3 \in \cal{C}$, so $|\widetilde{V}_{\o}|\leq r/2$. Then we consider \eqref{cstar} and we obtain
\[
c_*(\widetilde{\eta }_{\o},4r)\leq C_2 (2 C_3 +1)\ve \leq \ve^{1/2}, \quad \mbox{for }\, \ve\leq  \frac{1}{(4 C_2 C_3)^2} \in \frac{1}{\cal{C}}.
\]
Moreover, looking at \eqref{c2}
\[
r=2 C_3\ve \leq \frac{1}{C_1}\quad \mbox{for }\, \ve \leq \frac{1}{2 C_1 C_3}\in \frac{1}{\cal{C}}.
\]
So, with
\[
\ve=\ve^*\wedge \frac{1}{(4 C_2 C_3)^2} \wedge \frac{1}{2 C_1 C_3}\in \frac{1}{\cal{C}},
\]
and $r=2 C_3 \ve$ we have
\[
|\widetilde{V}_{\o}|\leq \frac{r}{2},
\qquad  c_*(\widetilde{\eta}_\o,4 r)\leq \ve^{1/2},
\qquad r\leq \frac{1}{C_1}.
\]
Now, by using also \eqref{rlh} and \eqref{c2}, it follows that \eqref{hpimpl} holds, and we can apply Lemma \ref{invfun2}. We obtain
\[
\begin{split}
\frac{1}{K \det Q^{1/2}
}\exp\left(-\frac{K}{\l_*(Q)}|z|^2 \right) \leq
p_{G,\bar{U}_r|\cal{G}}(z)
\end{split}
\]
for $|z|\leq r$, where $K$ does not depend on $\s,b$. Remark that, using $\l_*(Q)\geq \frac{\ve^{3m\r}}{(2\sqrt{m})^m}$, $\r=\frac{1}{8m}$, $r/\ve=2 C_1$ and $\ve\leq 1/(4 C_2 C_1)^2$,
\be{noexp}
\frac{|z|^2}{\l_*(Q)}\leq \frac{(2\sqrt{m})^m r^2}{\ve^{3m\r}}\leq
(2\sqrt{m})^m \frac{ r^2}{\ve}
\leq
(2\sqrt{m})^m \frac{ r^2}{\ve^2}\ve
\leq
(2\sqrt{m})^m (2C_1)^2 \ve \leq \bar{K}
\ee
where $\bar{K}$ does not depend on $\s$, $b$. Therefore $ p_{G,\bar{U}_r|\cal{G}}(z)\geq \frac{1}{C} $, for $|z|\leq r$, for some $C\in\cal{C}$, on $\widetilde{U}_\varepsilon\neq 0$. Now, by recalling that $|\widetilde{V}_\o|\leq r/2$ and \eqref{New11}, we have
\be{Zc}
p_{\widetilde{Z},\bar{U}_r|\cal{G}}(z)\geq \frac{1}{C},\quad\mbox{for $|z|\leq r/2$ on the set $\{\widetilde U_\varepsilon\neq 0\}$}.
\ee
STEP 2: we get rid of the conditioning on $\cal{G}$, to have non-conditional bound for $p_{\widetilde{Z},U_{\varepsilon,r}}$.

Since $\widetilde{U}_\varepsilon$ is $\cal{G}$ measurable, for every non-negative and measurable function $f$  with support included in $B(0,r/2)$ we have
\[
\E(f(\widetilde{Z})U_{\varepsilon,r})=\E\big(\widetilde{U}_\varepsilon
\E(f(\widetilde{Z})\bar{U}_r|\cal{G})\big).
\]
By \eqref{defcond} and \eqref{Zc}, we obtain
\[
\E(f(\widetilde{Z})U_{\varepsilon,r})\geq \frac{1}{C}\E(\widetilde{U}_\varepsilon) \int f(z) dz
\]
Since $\Lambda_{\r, \ve} \subset \{\widetilde{U}_\varepsilon= 1\}$, $\E(\widetilde{U}_\varepsilon)\geq \PR(\Lambda_{\r, \ve})$, so by using \eqref{N9} and $\ve\in 1/\cal{C}$ we finally get that $\E(\widetilde{U}_\varepsilon)\geq \frac{1}{C}$, so \eqref{estZ} is proved.
\epr

\subsection{Lower bound for the density of $X_\delta$}

We study here a lower bound for the density of $X_\delta$, $X$ being the solution to \eqref{equation}. Recall decomposition \eqref{D1bis}:
$$
X_\delta-x_0-b(0,x_0)\delta=Z_\delta+R_\delta.
$$
Our aim is to ``transfer'' the lower bound for $\widetilde Z=\Gamma^{-1}_\d J_\Theta(Z_\delta)$ already studied in Lemma \ref{lemmaZ} to a lower bound for $X_\delta$. In order to set up this program, we use results on the distance between probability densities which have been  developed in \cite{BallyCaramellino:12}. In particular, we are going to use now Malliavin calculus. Appendix \ref{app-mall} is devoted to a recall of all the results and notation the present section is based on. In particular, we denote with $D$ the Malliavin derivative with respect to $W$, the Brownian motion driving the original equation \eqref{equation}.

But first of all, we need some properties of the matrix $\Gamma_\delta$, which can be resumed as follows. We set $SO(d)$ the set of the $d\times d$ orthogonal matrices and we denote with $\mathrm{Id}_d$ the $d\times d$ identity matrix.

\begin{lemma}\label{Gamma-prop}
Set for simplicity $A_\delta=A_\delta(0,x_0)$ and let $\Gamma_\delta$ be as in \eqref{New7}. There exist $\mathcal{U}\in SO(n)$, $\underline{\mathcal{U}}\in SO(m-n)$ and $\mathcal{V}\in SO(m)$ such that
\[
\Gamma_\d = \left(
\begin{array}{cc}
\mathcal{U} & 0 \\
0^T & \underline{\mathcal{U}}
\end{array}
\right) \left(
\begin{array}{cc}
\bar{\Sigma} & 0 \\
0^T & \mathrm{Id}_{m-n}
\end{array}
\right) \mathcal{V}^T
\]
where $0$ denotes a null $n\times (m-n)$ matrix and $\bar{\Sigma}=\bar{\Sigma}=\mathrm{Diag}(\lambda_1(A_\delta),\ldots,\lambda_n(A_\delta))$, $\lambda_i(A_\delta)$, $i=1,\ldots,n$, being the singular values of $A_\delta$ (which are strictly positive because $A_\delta$ has full rank).
\end{lemma}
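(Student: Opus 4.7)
The statement is essentially a block-refined singular value decomposition of $\Gamma_\d$, so the plan is to build the three orthogonal factors directly from an SVD of $A_\d$ and from the fact that the last $m-n$ rows of $\Gamma_\d$ are, by construction, an orthonormal basis of $S^{\perp}$.

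First, since $A_\d$ is an $n\times m$ matrix with full row rank $n$ (by Assumption \ref{assumption2} applied to $A_\d(0,x_0)$), I would invoke the compact SVD and write $A_\d=\mathcal{U}\,\bar{\Sigma}\,\mathcal{V}_1^{T}$ where $\mathcal{U}\in O(n)$, $\bar{\Sigma}=\mathrm{Diag}(\lambda_1(A_\d),\ldots,\lambda_n(A_\d))$ has strictly positive diagonal, and $\mathcal{V}_1\in\R^{m\times n}$ has orthonormal columns. The columns of $\mathcal{V}_1$ span the row space $S$ of $A_\d$. Complete $\mathcal{V}_1$ to an orthogonal matrix $\mathcal{V}=(\mathcal{V}_1,\mathcal{V}_2)\in O(m)$, where the columns of $\mathcal{V}_2\in\R^{m\times(m-n)}$ form an orthonormal basis of $S^{\perp}$.

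Next, denote $\underline{\Gamma}_\d$ the $(m-n)\times m$ matrix with rows $\Gamma_\d^{n+1},\ldots,\Gamma_\d^{m}$. By the very definition of $\Gamma_\d$, these rows form an orthonormal basis of $S^{\perp}$; hence the rows of $\underline{\Gamma}_\d$ and the rows of $\mathcal{V}_2^{T}$ are two orthonormal bases of the same $(m-n)$-dimensional subspace, so there exists a unique $\underline{\mathcal{U}}\in O(m-n)$ with $\underline{\Gamma}_\d=\underline{\mathcal{U}}\,\mathcal{V}_2^{T}$. Stacking the two blocks, and using $\bar{\Sigma}\mathcal{V}_1^{T}=\bar{\Sigma}\,[\mathrm{Id}_n|0]\,\mathcal{V}^{T}$ and $\mathcal{V}_2^{T}=[0|\mathrm{Id}_{m-n}]\,\mathcal{V}^{T}$, one obtains
\[
\Gamma_\d=\begin{pmatrix} A_\d \\ \underline{\Gamma}_\d \end{pmatrix}
=\begin{pmatrix} \mathcal{U} & 0 \\ 0^{T} & \underline{\mathcal{U}} \end{pmatrix}
\begin{pmatrix} \bar{\Sigma} & 0 \\ 0^{T} & \mathrm{Id}_{m-n} \end{pmatrix}
\mathcal{V}^{T},
\]
which is the desired identity up to the orientation of the three orthogonal factors.

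Finally, to upgrade $\mathcal{U},\underline{\mathcal{U}},\mathcal{V}$ to $SO$ rather than just $O$, I would use the standard sign-flip trick: if $\det\mathcal{V}=-1$, replace the last column of $\mathcal{V}_2$ by its opposite; this still gives an orthonormal basis of $S^{\perp}$ and forces $\det\mathcal{V}=+1$, while the matrix $\underline{\mathcal{U}}$ redefined through $\underline{\Gamma}_\d=\underline{\mathcal{U}}\mathcal{V}_2^{T}$ automatically absorbs the change. An analogous sign flip on the first column of $\mathcal{U}$ compensated by a flip of the first row of $\bar{\Sigma}\mathcal{V}_1^{T}$ (which is permissible because $\mathcal{V}_1$ can be modified in the same way) ensures $\det\mathcal{U}=\det\underline{\mathcal{U}}=+1$, keeping $\bar{\Sigma}$ with positive diagonal entries. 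I do not expect any real obstacle here: the only thing to be careful about is the bookkeeping of these sign flips to make sure that all three factors end up in the special orthogonal group while the diagonal $\bar{\Sigma}$ retains the singular values of $A_\d$.
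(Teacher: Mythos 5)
Your proof is correct in its main lines and follows essentially the same strategy as the paper's: do an SVD of $A_\delta$, extend the right singular vectors to an orthonormal basis of $\R^m$, and stack. The one place where you diverge is in the treatment of $\underline{\Gamma}_\delta$. The paper performs a second singular value decomposition of $\underline{\Gamma}_\delta$ (whose ``$\Sigma$'' block is forced to be $\mathrm{Id}_{m-n}$ because the rows are orthonormal), then argues that the first $n$ columns of $\overline{\mathcal{V}}$ and the last $m-n$ columns of $\underline{\mathcal{V}}$ are mutually orthogonal. You instead observe directly that the rows of $\underline{\Gamma}_\delta$ and the rows of $\mathcal{V}_2^T$ are two orthonormal bases of the same subspace $S^\perp$, hence differ by a unique orthogonal $\underline{\mathcal{U}}\in O(m-n)$. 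This is a small but genuine simplification: it avoids a second SVD and the orthogonality check on the assembled $\mathcal{V}$, and makes the structure of the argument slightly more transparent.

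Where your proposal is not fully resolved is the final step upgrading $O$ to $SO$, and it is worth flagging that the obstacle you wave away is real. The three sign flips you describe are not independent: flipping a column of $\mathcal{V}_2$ also flips $\det\underline{\mathcal{U}}$, and flipping the first column of $\mathcal{U}$ together with the first column of $\mathcal{V}_1$ (so as to preserve $\mathcal{U}\bar{\Sigma}\mathcal{V}_1^T$) also flips $\det\mathcal{V}$. Chasing the determinants through the identity shows that
\[
\det\mathcal{U}\cdot\det\underline{\mathcal{U}}\cdot\det\mathcal{V}
=\frac{\det\Gamma_\delta}{\prod_{i=1}^n\lambda_i(A_\delta)}=\pm 1,
\]
and this product is invariant under your sign flips. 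So all three factors can be put in the special orthogonal group if and only if $\det\Gamma_\delta>0$, which depends on the orientation of the orthonormal basis of $S^\perp$ chosen when $\Gamma_\delta$ was built, a choice the paper leaves free. The same imprecision is present in the paper's own proof (it invokes ``SVD'' producing $SO$ factors without comment, and its orthogonality argument only yields $\mathcal{V}\in O(m)$), so you have not lost anything relative to the source. In practice the issue is harmless: only $\mathcal{U}\in SO(n)$ is used downstream (for $\det\alpha=\det(\mathcal{U}\bar{\Sigma})=\prod\lambda_i(A_\delta)>0$ in \eqref{p1} and \eqref{CB}), and $\det\mathcal{U}=+1$ can always be arranged by a flip of the first columns of $\mathcal{U}$ and $\mathcal{V}_1$, at the cost of possibly losing $\mathcal{V}\in SO(m)$; alternatively one chooses the basis of $S^\perp$ so that $\det\Gamma_\delta>0$.
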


\begin{proof}
We recall that
\[
\Gamma_\d=\left( \begin{array}{c}A_\d\\
\underline{\Gamma}_\d
\end{array}\right),
\]
where $\underline{\Gamma}_\d$ is a $(m-n)\times n$ matrix whose rows are vectors of $\R^m$ which are orthonormal  and orthogonal with the rows of $A_\d$. We take a singular value decomposition for $A_\delta$ and for $\underline{\Gamma}_\delta$. So, there exist $\mathcal{U}\in SO(n)$ and $\bar{\mathcal{V}}\in SO(m)$ such that
$$
A_\d= \mathcal{U} \big(\bar{\Sigma}\ 0\big) \bar{\mathcal{V}}^T,
$$
$0$ denoting the $n\times (m-n)$ null matrix. Similarly, there exist
$\underline{\mathcal{U}}\in SO(m-n)$ and $\underline{\mathcal{V}}\in SO(m)$ such that
$$
\underline{\Gamma}_\d= \underline{\mathcal{U}} \big(0^T\ \mathrm{Id}_{m-n}\big) \underline{\mathcal{V}}^T,
$$
the diagonal matrix being $\mathrm{Id}_{m-n}$ because the rows of $\underline{\Gamma}_\d$ are orthonormal. Therefore, we get
\[
\Gamma_\d = \left(
\begin{array}{cc}
\mathcal{U} & 0 \\
0^T & \underline{\mathcal{U}}
\end{array}
\right) \left(
\begin{array}{cc}
\bar{\Sigma} & 0 \\
0^T & \mathrm{Id}_{m-n}
\end{array}
\right) \mathcal{V}^T
\]
where $\mathcal{V}$ is a $m\times m$ matrix whose first $n$ columns are given by the first $n$ columns of $\overline{\mathcal{V}}$ and the remaining $m-n$ columns are given by the last $m-n$ columns of $\underline{\mathcal{V}}$. Moreover, since each row of $A_\delta$ is orthogonal to any row of $\underline{\Gamma_\delta}$, it immediately follows that all columns of $\mathcal{V}$ are orthogonal. This proves that $\mathcal{V}\in SO(m)$, and the statement follows.
\end{proof}

Then we have

\bl{estF}
Suppose Assumption \ref{assumption1} and \ref{assumption2} both hold. Let $U_{\varepsilon, r}$ denote the localization in Lemma \ref{lemmaZ} and let  $\mathcal{U}$ and $\bar \Sigma$ be the matrices in Lemma \ref{Gamma-prop}. Set
$$
\a= \mathcal{U}\bar{\Sigma} \quad\mbox{and}\quad \widehat X_\delta=\a^{-1}  (X_\d-x_0-b(0,x_0)\d).
$$
Then there exist $C\in \cal{C}$, $\d_*,r\in 1/\cal{C}$ such that for $\d\leq \d_*, \, |z|\leq r/2$,
\be{estlocF}
p_{\widehat{X}_\d,U_{\ve,r}}(z)\geq \frac 1C,
\ee
$p_{\widehat{X}_\d,U_{\ve,r}}$ denoting the density of $\widehat{X}_\d$ with respect to the measure  $\PR_{U_{\ve,r}}$.
\el
\bpr
We set $\widehat{Z}_\delta=\alpha^{-1}Z_\delta$ and we use Proposition \ref{generalbounds}, with the localization $U=U_{\ve,r}$, applied to $F=\widehat{X}_\delta$ and $G=\widehat Z_\delta$. Recall that the requests in (1) of Proposition \ref{generalbounds} involve several quantities: the lowest singular value (that in this case coincides with the lowest eigenvalue) $\lambda_*(\gamma_{\widehat{X}_\delta})$ and $\lambda_*(\gamma_{\widehat{Z}_\delta})$ of the Malliavin covariance matrix of $\widehat{X}_\delta$ and $\widehat{Z}_\delta$ respectively, as well as $m_{U_{\ve,r}}(p)$ in \eqref{defmU}, the Sobolev-Malliavin norms $\|\widehat{X}_\delta\|_{2,p,U_{\ve,r}}$, $\|\widehat{Z}_\delta\|_{2,p,U_{\ve,r}}$, and $\|\widehat{X}_\delta-\widehat{Z}_\delta\|_{2,p,U_{\ve,r}}=\|  \a^{-1} R_\d\|_{2,p,U_{\ve,r}}$. First of all, by using Assumption \ref{assumption1}, one easily gets
\begin{equation}\label{est1}
\|\a^{-1} R_\d\|_{2,p}\leq C\d^{-1} \d^{3/2}=C\sqrt \d
\quad\mbox{and}\quad
\|\widehat{X}_\delta\|_{2,p}+\|\widehat Z_\delta\|_{2,p}
\leq C,\quad \exists C\in\mathcal{C}.
\end{equation}
We now check that $m_{U_{\ve,r}}(p)<\infty$ for every $p$. Standard computations and \eqref{Sup1} give, for every $p$,
\[
\| 1/\det Q \|_{2,p}+
\| \,|Q|_l \|_{2,p}+
\|q(B)\|_{2,p}+
\|\Th \|_{2,p}\leq C,
\]
so we can apply \eqref{normlocalization} and conclude
\be{estimateU}
m_{U_{\ve,r}}(p) \leq  C \in \cal{C}.
\ee
We now study the lower eigenvalue of the Malliavin covariance matrix of $\widehat{Z}_\delta$. From the definition of $\widehat{Z}_\delta$, we have
\be{ZG}
\widetilde{Z}=
\mathcal{V} \left(
\begin{array}{c}
\a^{-1} Z_\d \\
\underline{\mathcal{U}}^T\underline{\Gamma}_\delta\Theta
\end{array}
\right)
=
\mathcal{V} \left(
\begin{array}{c}
\widehat{Z}_\delta \\
\underline{\mathcal{U}}^T\underline{\Gamma}_\delta\Theta
\end{array}
\right),
\ee
(see the proof of Lemma \ref{Gamma-prop} for the definition of $\underline{\Gamma}_\delta$).
As an immediate consequence, one has $ \l_*(\g_{\widehat{Z}_\delta}) \geq
\l_*(\g_{\widetilde{Z}})$, and it suffices to study the lower eigenvalue of $\widetilde{Z}$. By using \eqref{New11}, we have
\[
\begin{split}
\langle\g_{\widetilde{Z}} \xi, \xi \rangle
& = \sum_{i=1}^d \int_0^\d \langle D^i_s \widetilde{Z},\xi \rangle^2
= \sum_{i=1}^d \int_{s_{i-1}(\d)}^{s_i(\d)} \langle D^i_s \widetilde{Z},\xi \rangle^2
= \sum_{i=1}^d \int_{s_{i-1}(\d)}^{s_i(\d)} \langle D^i_s (\Theta+\widetilde \eta(\Theta)),\xi \rangle^2\\
& \geq \sum_{i=1}^d \int_{s_{i-1}(\d)}^{s_i(\d)} \Big(\frac{1}{2}\langle D^i_s \Th,\xi \rangle^2 - \langle D^i_s \eta(\Th),\xi \rangle^2\Big) ds\\
&=S_1+S_2.
\end{split}
\]
We write
\begin{align*}
&S_1=\sum_{i=1}^d \int_{s_{i-1}(\d)}^{s_i(\d)} \frac{1}{2}\langle D^i_s \Th,\xi \rangle^2\geq \frac{\l_*(Q)}{2}\\
&S_2=\sum_{i=1}^d \int_{s_{i-1}(\d)}^{s_i(\d)} \langle \nabla\eta(\Th) D^i_s \Th,\xi \rangle^2 ds
= \sum_{i=1}^d \int_{s_{i-1}(\d)}^{s_i(\d)} \langle D^i_s \Th, \nabla\eta(\Th)^T \xi \rangle^2 ds
\leq \l^*(Q) |\nabla\eta(\Th)|^2 |\xi|^2,
\end{align*}
so that
\[
\begin{split}
\l_*(\g_{\widehat{Z}_\delta}) \geq
\l_*(\g_{\widetilde{Z}}) \geq \l_*(Q) \left(\frac{1}{2} - \frac{\l^*(Q)}{\l_*(Q)} |\nabla\eta(\Th)|^2\right).
\end{split}
\]
On $\{\widetilde{U}_\varepsilon\neq 0\}$, we have already proved in Lemma \ref{lemmaZ} that $c_*(\eta,\Th)\leq \frac{\sqrt{\l_*(Q)/\l^*(Q)}}{2m}$. Since $|\nabla\eta(\Th)| \leq m c_*(\eta,\Th)$, we obtain
\[
|\nabla\eta(\Th)| \leq \frac{1}{2}\sqrt{\frac{\l_*(Q)}{\l^*(Q)}},
\]
and therefore $\l_*(\g_{\widehat{Z}_\delta})\geq 4 \l_*(\g_{\widetilde{Z}})\geq \l_*(Q)\geq \ve^{3m\r}$, which implies that $\E_{U_{\ve,r}}(\lambda_*(\widehat{Z}_\delta)^{-p})<\infty$ for every $p$.
Let us study the lowest eigenvalue of $\gamma_{\widehat{X}_\delta}$. We use here some results from next Section \ref{upper}, namely Lemma \ref{gammaF}. There, we actually prove the desired bound for the Malliavin covariance matrix of $\a^{-1}(X_\d-x_0)$. Here we are considering $\widehat{X}_\delta=\a^{-1}(X_\d-x_0-b(0,x_0)\d)$, but their Malliavin covariance matrix is the clearly the same. Then, Lemma \ref{gammaF} gives that
$\E(\lambda_*(\gamma_{\widehat{X}_\delta})^{-p})<\infty$ for every $p$.

So, we have proved that all the requests in Proposition \ref {generalbounds} hold. Then, we can apply \eqref{generallowerbound} and we get
$$
p_{{\widehat{X}_\delta},U_{\ve,r}}(z)
\geq p_{{\widehat{Z}_\delta},U_{\ve,r}}(z)
-C'\sqrt\delta
$$
with $C'\in \mathcal{C}$. Now, from \eqref{ZG} and \eqref{estZ}, with a simple change of variables, we get
\be{boundG}
p_{{\widehat{Z}_\delta},U_{\ve,r}}(z) \geq
\frac{1}{C}, \quad \mbox{for}\quad |z|\leq \frac{r}{2}.
\ee
We can assert the existence of $\delta_*\in 1/\mathcal{C}$ and  $C\in \mathcal{C}$ such that for all $\delta\leq \delta_*$
$$
p_{{\widehat{X}_\delta},U_{\ve,r}}(z)
\geq \frac 1{2C},
$$
and the statement follows.
\epr

We are now ready for the proof of the lower bound:

\bt{lower-th}
Let Assumption \ref{assumption1} and \ref{assumption2} hold. Let $p_{X_t}$ denote the density of $X_t$, $t>0$. Then there exist positive constants $r,\d_*,C$ such that for every $\d\leq \d_*$ and for every $y$ such that $|y- x_0-b(0,x_0)\d|_{A_\d(0,x_0)}\leq r$,
\[
p_{X_\d}(y) \geq
\frac{1}{C \d^{n-\frac{\dim\langle \s(0,x_0) \rangle}{2} }},
\]
$\dim\langle \s(0,x_0) \rangle$ denoting the dimension of the vector space spanned by $\sigma_1(0,x_0),\ldots,\sigma_d(0,x_0)$.
Here, $C\in\mathcal{C}$ and  $r,\delta_\ast\in 1/\mathcal{C}$.
\et
\bpr
We take the same $\d_*,r$ as in Lemma \ref{estF} and let ${\widehat{X}_\delta}$ denotes the r.v. handled in Lemma \ref{estF}. By construction, we have $X_\delta=x_0+b(0,x_0)+\alpha {\widehat{X}_\delta}$, so by applying Lemma \ref{estF} we get
\[
\begin{split}
\E(f(X_\d)) & \geq \E_{U_{\ve,r}}(f(X_\d)) = \E_{U_{\ve,r}}(f(x_0 + b(0,x_0)\d+ \a \widehat{X}_\delta))\\
&= \int f(x_0 +b(0,x_0)\d+ \a z) p_{\widehat{X}_\delta,U_{\ve,r}}(z) dz\\
&\geq \frac{1}{C} \int_{\{|z|\leq r/2\}} f(x_0 +b(0,x_0)\d+ \a z) dz\\
&\geq \frac{1}{C|\det \a |} \int_{|y|_{\a}\leq r/2}
f(x_0 +b(0,x_0)\d+ \,y) dy\\
\end{split}
\]
From \eqref{Adlucia} and the Cauchy-Binet formula we obtain
\be{CB}
\frac{1}{C} \d^{n-\frac{\dim\langle \s \rangle}{2}} \leq
\sqrt{|\det A_\d A_\d^T |} =\det(\a)\leq
C \d^{n-\frac{\dim\langle \s \rangle}{2}}
\ee
and the statement follows.
\epr

\begin{remark}\label{classC}
We observe that if the diffusion coefficients are bounded, that is Assumption \ref{assumption1bis} holds, then the class $\mathcal{C}$ in \eqref{CC} of the constants can be replaced by
$$
\mathcal{D}_0=\Big\{C>0\,:\,C=c \Big(\frac{\kappa}{\lambda(0,x_0)}\Big)^{q},\ \exists\ c,q>0\Big\}
$$
and, as before, $1/\mathcal{D}_0=\{C>0\,:\,1/C\in\mathcal{D}_0\}$. This is because in the estimates for $\|\widehat{X}_\delta-\widehat Z_\delta\|_{2,p}$ and $\|\widehat{X}_\delta\|_{2,p}$ one does not need any more to use the Gronwall's Lemma but it suffices to use the boundedness of the coefficients and the Burkholder inequality.
\end{remark}

\section{Upper bound}\label{upper}

We study here the upper bound for the density of $X_\d$.

\subsection{Rescaling of the diffusion}
As for the lower bound, we again scale $X_\d$. We recall the results and the notation in Lemma \ref{Gamma-prop} and we define the change of variable
\be{Ta}
T_\a:\R^n \rightarrow \R^n , \quad\quad T_\a(y)=\a^{-1} y, \quad \mbox{where}\quad \a=\mathcal{U} \bar{\Sigma}
\ee
and its adjoint $T_\a^*(v)=\a^{-1,T} v$. Note that $\a$ is a $n\times n$ matrix. We write $A_{\d,j}$, for $j=1,\dots, m$, for the columns of $A_\d$ (which can be $\sqrt{\d}\s_i$ or $\d[\s_i,\s_p]$). The following properties hold:
\begin{lemma}
Let $T_\alpha$ be defined in (\ref{Ta}). Then one has:
\begin{align}
& |y|_{A_\d}=|T_\a y|=|y|_\a,\quad \forall y\in \R^n, \quad\mbox{ and }\det \a= \sqrt{\det A_\d A_\d^T} \label{p1}\\
& \forall v\in \R^n \mbox{ with }|v|=1,\quad \exists j=1,\dots, m : \quad |T_\a^*v \cdot A_{\d,j}|\geq \frac{1}{m} \label{p2}\\
& \forall j=1,\dots,d, \quad \sqrt{\d}|T_\a\s_j| \leq 1 \label{p3}
\end{align}
\end{lemma}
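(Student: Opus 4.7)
All three properties hinge on one linear-algebra observation extracted from Lemma \ref{Gamma-prop}, so my plan is to establish that observation first and then read off each item.

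\textbf{The crux.} Since $A_\delta$ consists of the first $n$ rows of $\Gamma_\delta$, the factorization of Lemma \ref{Gamma-prop} immediately gives $A_\delta = \mathcal{U}(\bar\Sigma\ 0)\mathcal{V}^T$ (with the block-$0$ of size $n\times(m-n)$), and hence $A_\delta A_\delta^T = \mathcal{U}\bar\Sigma^2\mathcal{U}^T = \alpha\alpha^T$ because $\mathcal{V}\in SO(m)$ and $\alpha=\mathcal{U}\bar\Sigma$. As a consequence, the $n\times m$ matrix $M:=\alpha^{-1}A_\delta$ satisfies $MM^T = \alpha^{-1}(\alpha\alpha^T)\alpha^{-T} = \mathrm{Id}_n$, i.e.\ it has orthonormal rows. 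This is the single fact that drives everything that follows.

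\textbf{Deriving (p1).} From $A_\delta A_\delta^T = \alpha\alpha^T$ I invert to obtain $(A_\delta A_\delta^T)^{-1} = \alpha^{-T}\alpha^{-1}$, so $|y|_{A_\delta}^2 = \langle\alpha^{-T}\alpha^{-1}y,y\rangle = |\alpha^{-1}y|^2 = |T_\alpha y|^2$, which is also $|y|_\alpha^2$ by the very definition \eqref{Not4} of the norm attached to $\alpha$. For the determinant I use $\det\mathcal{U}=1$ and $\det\alpha=\det\bar\Sigma=\prod_i\lambda_i(A_\delta)=\sqrt{\det(A_\delta A_\delta^T)}$.

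\textbf{Deriving (p2) and (p3).} For (p2) I rewrite $T_\alpha^* v\cdot A_{\delta,j} = v^T\alpha^{-1}A_{\delta,j} = (v^T M)_j$. Row orthonormality then gives $\sum_{j=1}^m (v^T M)_j^2 = v^T MM^T v = |v|^2 = 1$, so at least one index $j$ achieves $|(v^T M)_j|\geq 1/\sqrt{m}\geq 1/m$. For (p3) I identify $\sqrt\delta\,\sigma_j$ with the column $A_{\delta,l(j,j)}$ of $A_\delta$ via \eqref{Al}, which yields $\sqrt\delta\,T_\alpha\sigma_j = Me_{l(j,j)}$. Since $MM^T=\mathrm{Id}_n$, the matrix $M^TM$ is an orthogonal projection in $\R^m$, and therefore $|Me_k|^2=e_k^T(M^TM)e_k\leq |e_k|^2=1$ for every $k$, giving $\sqrt\delta\,|T_\alpha\sigma_j|\leq 1$.

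I do not foresee a real obstacle: once the identity $A_\delta A_\delta^T = \alpha\alpha^T$ is in hand from Lemma \ref{Gamma-prop}, everything else is routine linear algebra around the isometric embedding $M^T\colon\R^n\hookrightarrow\R^m$. The only point requiring attention is keeping the column indexing straight via the map $l(i,p)$ of \eqref{lip} when translating between $\sigma_j$ and the column $A_{\delta,l(j,j)}$.
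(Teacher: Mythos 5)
Your proof is correct and takes essentially the same route as the paper's: both exploit the factorization from Lemma \ref{Gamma-prop} to obtain $\alpha^{-1}A_\delta = [\mathrm{Id}_n\ 0]\mathcal{V}^T$ (equivalently, your observation that $M=\alpha^{-1}A_\delta$ has orthonormal rows), and then read off (p1)--(p3) by linear algebra. Your packaging via $MM^T=\mathrm{Id}_n$ and the projection $M^TM$ is slightly cleaner, and in (p2) it gives the sharper bound $1/\sqrt m$, but the underlying argument is the same.
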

\bpr{}
\eqref{p1} follows easily from $\a=\mathcal{U} \bar{\Sigma}$ and the definition \eqref{Not4} of $|\cdot|_{M}$. Now, $(T_\a^*v)^T A_\d = v^T \a^{-1} A_\d = [v^T 0] \mathcal{V}^T$. So $|(T_\a^*v)^T A_\d|=|[v^T 0] \mathcal{V}^T|=1$. Recall that $A_{\d,j}$ are the columns of $A_\d$, therefore $\exists j=1,\dots, m : \quad | (T_\a^*v)^T A_{\d,j}|\geq \frac{1}{m}$, which is equivalent to \eqref{p2}. Moreover, $T_\a A_\d=[\mathrm{Id}_n\, 0]\mathcal{V}^T$. This easily implies that $\forall i=1,\dots, m,\quad  | T_\a A_{\d,i}|\leq 1$. For $A_{\d,i}=\s_j(0,x_0)\sqrt{\d}$ we have \eqref{p3}.
\epr
We define now
\begin{equation}\label{F}
F=\a^{-1} (X_\d-x_0)=T_\a (X_\d-x_0).
\end{equation}
As for the lower bound, we first estimate the density of $F$, using the results in Appendix \ref{app-mall} (specifically, \eqref{generalupperbound} in Proposition \ref{generalbounds}), and then recover the estimates for the density of $X_\d$ via a change of variable.

\subsection{Malliavin Covariance Matrix}\label{sectionnorris}

Let $F$ be as in \eqref{F}. To prove the upper bound for its density $p_F$ we need a  quantitative control on the Malliavin covariance matrix $\g_F$ of $F$. We start with some preliminary results.

The following lemma is a slight modification of Lemma 2.3.1. in \cite{Nualart:06}.
\bl{lemma:matrixmoment}
Let $\g$ be a symmetric nonnegative definite $n\times n$ random matrix. Denoting $|\g|=\sum_{1\leq i,j \leq n} |\g^{i,j}|^2)^{1/2}$, we assume that, for $p\geq 2$, $\E|\g|^{p+1} < \infty$, and that there exists $\ve_0>0$ such that for $\ve \leq \ve_0$,
\[
\sup_{|\xi|=1} \PR [\langle\g \xi,\xi \rangle<\ve]\leq \ve^{p+2n}
\]
Then there exists a constant $C$ depending only on the dimension $n$ such that
\[
\E \l_*(\g)^{-p} \leq C \E |\g|^{1+p} \ve_0^{-p}
\]
\el
We also need the following technical result.
\bl{covT}
Let $\d\in(0,1]$ and let $a_t, b_t$, $t\in [0,\d]$ be stochastic processes which are a.s. increasing.  Assume that $b_0=0$. Suppose for fixed $p\geq 1$ and for all $ t\in[0,\d]$ one has
\[
\E[b_t^p]\leq C_p t^{2p}\quad\mbox{ and }\quad a_t\geq \frac{t-b_t}{\d}.
\]
Then for all $\ve>0$
\[
\PR( a_\d \leq \ve)\leq
4^{p}C_p \ve^{p}.
\]
\el
\bpr
Set
\[
S_\ve = \inf  \left\{ s \geq 0: b_s \geq \frac{\d\ve}{2} \right\} \wedge \d,
\]
Remark that for any $p>0$
\[
\PR(S_\ve < \d\ve)
=
\PR\left(
b_{\d\ve}^p
\geq \left(\frac{\d\ve}{2}\right)^p
\right)
\leq 2^p \frac{\E b_{\d\ve}^p}
{(\d\ve)^{p}}
\leq 2^p C_p (\d \ve)^p .
\]
On the other hand, on $S_\ve\geq \d\ve$,
\[
a_{S_\ve} \geq a_{\d\ve} \geq \frac{\d\ve-\d\ve/2 }{\d} \geq
\ve/2.
\]
Therefore
\[
\PR(a_\d < \ve/2) \leq
\PR(a_\d < \ve/2, S_\ve<\delta\ve)+\PR(a_\d < \ve/2, S_\ve\geq \delta\ve)
\leq
\PR(S_\ve < \d\ve)\leq 2^p C_p\ve^p.
\]
This implies that $\PR(a_\d < \ve) \leq 4^{p}C_p \ve^{p}$.
\epr
The following Lemma \ref{norris} is a refinement of what was proved by Norris in \cite{Norris}, in the sense that we take care of the same quantities, but handling more carefully the dependence on the final time $t_0$. This is a key estimate for the proof of next Theorem \ref{upper-th}.

\bl{norris}
Suppose $u(t)=(u_1(t),\dots, u_d(t))$ and $a(t)$ are a.s. continuous and adapted processes such that for some $p\geq 1$, $C>0$ and for every $t_0\leq 1$ one has
\be{int_norr}
\E \left[\sup_{0\leq s \leq t_0} |u_s|^p \right]\leq \frac{C}{t_0^p} ,\quad\quad \E \left[\sup_{0\leq s \leq t_0} |a_s|^p \right]\leq \frac{C}{t_0^p}.
\ee
Set
\[
Y(t)=y+\int_0^t a(s) ds + \sum_{k=1}^d \int_0^t u_k(s) dW^k_s.
\]
Then, for any $q>4$ and $r>0$ such that $6r+4<q$,  there exists $\ve_0(q,r,p)$ such that  for every $t_0 \leq 1$ and $\ve\leq \ve_0(q,r,p)$ one has
\[
\PR\left\{\int_0^{t_0} Y_t^2 dt <\ve^q,\, \int_0^{t_0} |u(t)|^2 dt \geq \frac{6\ve}{t_0}
\right\}\leq (2^pC+1)\ve^{rp}.
\]
\el

\bpr{}
Set $\th_t=|a_t|+|u_t|$, and
\[
\tau=\inf\left\{ s\geq 0: \sup_{0\leq u\leq s} \th_u >\frac{\ve^{-r}}{t_0} \right\} \wedge t_0.
\]
We have
\[
\PR\left\{\int_0^{t_0} Y_t^2 dt <\ve^q,\, \int_0^{t_0} |u(t)|^2 dt \geq \frac{\ve}{t_0}
\right\}
\leq A_1+A_2.
\]
where $A_1=\PR[\tau<t_0]$ and
\[
A_2= \PR\left\{
\int_0^{t_0} Y_t^2 dt<\ve^q,\int_0^{t_0} |u_t|^2dt\geq \frac{\ve}{t_0}, \tau=t_0
\right\}
\]
An upper bound for $A_1$ easily follows from \eqref{int_norr}. Indeed
\[
\PR[\tau<t_0]\leq \PR\left[\sup_{0\leq s\leq t_0} \th_u >\frac{\ve^{-r}}{t_0}\right]
          \leq t_0^p \ve^{rp}\E\left[\sup_{0\leq s\leq t_0} \th_s^p\right]
			 \leq 2^p C\ve^{rp}.
\]
for $\ve\leq \ve_0$. To estimate $A_2$ we introduce
\begin{align*}
&N_t=\int_0^t Y_s \sum_{k=1}^d u^k_s dW^k_s\quad\mbox{and}\\
&B= \left\{\langle N\rangle_\tau  <\r,\sup_{0\leq s\leq \tau }|N_s|\geq \d \right\},\quad\mbox{with}
\quad \d=\frac{\ve^{2r+2}}{t_0}  \quad\mbox{and}\quad \r=\frac{\ve^{q-2r}}{t_0^2}.
\end{align*}
By the exponential martingale inequality,
\[
\PR(B)\leq \exp (\frac{-\d^2}{2\r} )\leq \exp\left( -\ve^{6r+4-q} \right).
\]
So, in order to conclude the proof, it suffices to show that
\be{incl}
\left\{
\int_0^{t_0} Y_t^2 dt<\ve^q,\int_0^{t_0} |u_t|^2 dt\geq \frac{6\ve}{t_0}, \tau =t_0
\right\}\subset B,
\ee
We suppose $\o \notin B$, $\int_0^{t_0} Y_t^2 dt<\ve^q$ and $\tau=t_0$ and show $\int_0^{t_0} |u_t|^2dt < 6\ve/t_0$. With these assumptions,
\[
\langle N \rangle_\tau =\int_0^{\tau } Y_t^2 |u_t|^2 dt
\leq \int_0^{t_0} Y_t^2 dt\,\sup_{0\leq t\leq \tau }|u_t|^2 <\frac{\ve^{q-2r}}{t_0^2}=\r.
\]
So, for $\o\notin B$ then one necessarily has $\sup_{0\leq t\leq \tau } |\int_0^{t} Y_s \sum_{k=1}^d u^k_s dW^k_s|<\d=\ve^{2r+2}/t_0$. From $6r+4\leq q$, if $\tau =t_0$ then
\begin{align*}
\sup_{0\leq t\leq \tau }\left|\int_0^t Y_s a_s ds\right|
&\leq \left( t_0 \int_0^\tau  Y_s^2 a_s^2 ds\right)^{1/2}
\leq t_0\Big(\int_0^{t_0}Y_s^2ds\sup_{0\leq s\leq \tau }|a_s|^2\Big)^{1/2}\\
&\leq t_0\Big(\ve^q\,\frac {\ve^{-2r}}{t_0^2}\Big)^{1/2}
\leq
\frac{\ve^{2r+2}}{t_0}.
\end{align*}
Thus
\[
\sup_{0\leq t\leq \tau } \left|\int_0^t Y_s dY_s \right|\leq
\sup_{0\leq t\leq \tau } \left|\int_0^t Y_s a_s ds+\int_0^t Y_s u_s dW_s \right|
\leq \frac{2 \ve^{2r+2}}{t_0}.
\]
By It\^o's formula, $Y_t^2=y^2+2\int_0^t Y_s dY_s+\langle M\rangle_t$ with $\langle M\rangle_t=\int_0^t |u_s|^2ds$. So, recalling that $q>2r+2$,
\[
\int_0^\tau  \langle M\rangle_tdt   =
\int_0^\tau  Y_t^2 dt - \tau  y^2 - 2\int_0^\tau  \int_0^t Y_s dY_s dt \\
 < \ve^q +4 \frac{\ve^{2r+2}}{t_0}<5\frac{\ve^{2r+2}}{t_0}.
\]
Since $t\mapsto \langle M\rangle_t$ is non negative and increasing, for $0<\g <\tau $ we have
\[
\g \langle M \rangle_{\tau -\g}\leq \int_{\tau -\gamma}^\tau \langle M\rangle_tdt
\leq 5\frac{\ve^{2r+2}}{t_0}.
\]
Using also the fact that
\[
\langle M\rangle_\tau -\langle M\rangle_{\tau -\g}=\int_{\tau -\g}^\tau  |u_s|^2 ds \leq \g \frac{\ve^{-2r}}{t_0^2},
\]
we have
\[
\langle M \rangle_\tau  <\frac{5\ve^{2r+2}}{\g}+ \g \frac{\ve^{-2r}}{t_0^2}.
\]
With $\g=t_0 \ve^{2r+1}$, this gives $\int_0^{t_0} |u_s|^2 ds =\langle M \rangle_\tau  <\frac{6\ve}{t_0}$.
\epr

We are now ready to prove the non degeneracy of the Malliavin covariance matrix. More precisely, we prove a quantitative version of this property: the $L^p$ norm of the inverse of the Malliavin covariance matrix of $F$ is upper bounded by a constant in $\mathcal{C}$, $\mathcal{C}$ being defined in \eqref{CC}.

\bl{gammaF}
Let $\a$, $T_\alpha$  and $F=T_\a (X_\d-x_0)$ be defined as in \eqref{Ta} and \eqref{F}. Let $\gamma_F$ denote the Malliavin covariance matrix of $F$. Then  for any $p>1$ there exists $C\in\cal{C}$ such that, for $\d\leq 1$, $\E |\l_*(\g_F)|^{-p}\leq C$.
\el
\bpr
We need a bound for the moments of the inverse of
\[
\g_F= \sum_{k=1}^d \int_0^\d D^k_s F D^k_s F^T ds.
\]
Following \cite{Nualart:06} we define the tangent flow $Y$ of $X$ as the derivative with respect to the initial condition of $X$: $Y_t:=\partial_x X_t$. We also denote its inverse $Z_t=Y_t^{-1}$. Then one has (remark that the equations we consider for $X$, $Y$ and $Z$ are all in Stratonovich form):
\be{defZY}
\begin{split}
Y_t&=\mathrm{Id} + \sum_{k=1}^d\int_0^t \nabla_x \s_k(s,X_s) Y_s \circ dW^k_s+\int_0^t \nabla_x b(s,X_s) Y_s ds\\
Z_t&=\mathrm{Id} - \sum_{k=1}^d\int_0^t Z_s \nabla_x \s_k(s,X_s) \circ dW^k_s-\int_0^t Z_s \nabla_x b(s,X_s)ds,
\end{split}
\ee
where $\nabla_x \s_k$ and $\nabla_x b$ are the Jacobian matrix with respect to the space variable. It holds
\[
D_s X_\d=Y_\d Z_s \s(s,X_s),\quad s<\delta.
\]
By applying It\^o's formula we have the following representation, for $\phi\in C^{1,2}$:
\be{devZphim}
\begin{split}
Z_t \phi(t,X_t)&=
\phi(0,x_0)+\int_0^t Z_s \sum_{k=1}^d [\s_k,\phi](s,X_s) dW_s^k\\
&+
\int_0^t Z_s \left\{ [b,\phi]+ \frac{1}{2} \sum_{k=1}^d [\s_k,[\s_k,\phi]]+ \frac{\partial \phi}{\partial s} \right\} (s,X_s)\,ds
\end{split}
\ee
(details are given in \cite{Nualart:06}, remark that in the r.h.s. above we are taking into account an It\^o integral). We now compute
\[
D_s F = \a^{-1} D_s X_\d
=\a^{-1} Y_\d Z_s \s(s,X_s)
=\a^{-1} Y_\d \a \a^{-1} Z_s \s(s,X_s)
\]
so
\[
\g_F = \a^{-1} Y_\d \a \, \bar{\g}_F \, (\a^{-1} Y_\d \a)^T \quad \mbox{ where }\quad
\bar{\g}_F =
\a^{-1}
\int_0^\d
Z_s \s(s,X_s)
\s(s,X_s)^T Z_s^T ds \,
\a^{-1,T},
\]
and
\[
\g^{-1}_F = (\a^{-1} Y_\d \a)^{-1,T} \, \bar{\g}_F^{-1} \, (\a^{-1} Y_\d \a)^{-1}.
\]
Now,
\[
(\a^{-1} Y_\d \a)^{-1} =
\a^{-1} Z_\d \a =
\mathrm{Id}_n
+ \a^{-1} (Z_\d-\mathrm{Id}_n) \a
\]
Using the fact that $\l^*(\cdot)$ is a norm on the set of matrices, and that for two $n\times n$ matrices $A,B$,  $\l^*(AB)\leq n \l^*(A) \l^*(B) $, we have
\[
\l_*\big(\g_F\big)^{-1}=
\l^*\big(\g_F^{-1}\big) \leq n^2 \l^*\big(\bar{\g}_F^{-1}\big) \,\l^*\big( (\a^{-1} Y_\d \a)^{-1}\big)^2
\]
and
\[
\l^*\big((\a^{-1} Y_\d \a)^{-1}\big)\leq 1+ n^2\l^*(\a^{-1}) \l^*(Z_\d-\mathrm{Id}_n) \l^*(\a).
\]
Standard estimates (see also \eqref{defZY}) give $\l^*(Z_\d-\mathrm{Id}_n) \leq C_1\sqrt{\d}$ for some $C_1\in \cal{C}$. Moreover
\[
\begin{split}
& \l^*(\a)=\l^*(A_\d)=\l^*(A \cal{D}_\d) \leq n \l^*(A) \l^*(\cal{D}_\d)\leq C_2 \sqrt{\d}, \quad C_2\in \cal{C}\\
&\l^*(\a^{-1})\leq \frac{1}{\l_*(A \cal{D}_\d)} \leq \frac{C_3}{\d} , \quad C_3\in \cal{C}
\end{split}
\]
and so for all $q>1$ exists $C \in \cal{C}$ such that
\[
\E \l^*\left((\a^{-1} Y_\d \a)^{-1}\right)^{q} \leq C
\]
We now need to estimate the reduced matrix, i.e. prove
that for all $q>1$ exists $C \in \cal{C}$ such that
\be{estred}
\E \l^*(\bar{\g}_F^{-1})^q=
\E \l_*(\bar{\g}_F)^{-q} \leq C
\ee
We show now that for any $p>0$, $\sup_{|v|=1} \PR\left( \langle \bar{\g}_F v,v \rangle\leq \ve\right) \leq \ve^p$, for $\d\leq 1$ for $\ve\leq \ve_0\in 1/\cal{C}$ not depending on $\d$. Together with lemma \ref{lemma:matrixmoment} this implies \eqref{estred}.

Denote $\xi = T_\a^* v=\alpha^{-1,T}v$. From \eqref{p2} and the definition \eqref{Adlucia} of $A_\d$ we have two possible cases:
A) $|\xi \cdot \s_j(0,x_0)| \geq \frac{1}{m\d^{1/2}}$ for some $j=1,\dots, d$, or B)
$|\xi \cdot [\s_j,\s_l](0,x_0)|\geq \frac{1}{m\d}$ for some $j,l=1,\dots, d,\, j\neq l$. Moreover
\be{changevariablematrix}
\a \bar{\g}_F \a^T
=\int_0^\d
Z_s \s(s,X_s)
\s(s,X_s)^T Z_s^T ds.
\ee
Therefore, with $\xi = T_\a^* v$, we have for any $q>1$
\[
\begin{split}
\PR(\langle \bar{\g}_F v,v \rangle\leq \ve^q) &=
\PR\left(\xi^T \int_0^\d Z_s \s(s,X_s)\s(s,X_s)^T Z_s^T ds\, \xi \leq \ve^q \right) \\
&=
\PR\left( \sum_{i=1}^d \int_0^\d |\xi^T Z_s \s_i(s,X_s) |^2 ds \leq \ve^q\right)
\end{split}
\]
We decompose this probability:
\[
\begin{split}
\PR(\langle &\bar{\g}_F v,v \rangle\leq \ve^q)
=
\PR\left( \sum_{i=1}^d \int_0^\d |\xi^T Z_t \s_i(t,X_t) |^2 dt \leq \ve^q\right)\\
&\leq
\PR\left( \sum_{i=1}^d \int_0^\d |\xi^T Z_t \s_i(t,X_t) |^2 dt \leq \ve^q,\sum_{i,k=1}^d \int_0^\d |\xi^T Z_t [\s_i,\s_k](t,X_t) |^2 dt \leq \frac{\ve}{\d}\right)\\
&+
\PR\left( \sum_{i=1}^d \int_0^\d |\xi^T Z_t \s_i(t,X_t) |^2 dt \leq \ve^q, \sum_{i,k=1}^d \int_0^\d |\xi^T Z_t [\s_i,\s_k](t,X_t) |^2 dt > \frac{\ve}{\d} \right)\\
&=:I_1+I_2
\end{split}
\]
To estimate $I_1$ we distinguish the two cases A) and B) above.

Case A): $|\xi \cdot \s_j(0,x_0)| \geq \frac{1}{m\d^{1/2}}$ for some $j=1,\dots, d$. We fix this $j$. Then,
\begin{align*}
&I_1
\leq
\PR\Big( \int_0^\d |\xi^T Z_t \s_j(t,X_t) |^2 dt \leq \ve^q,
\int_0^{\d} \Big|\xi^T \sum_{k=1}^d Z_t [\s_k,\s_j](t,X_t)\Big|^2 dt < \frac{ \ve}{\d} \Big) \\
&\leq
\PR\Big( \int_0^\d |\xi^T Z_t \s_j(t,X_t) |^2 dt \leq \ve^q,
\sup_{0\leq t\leq \d} \Big| \int_0^t \xi^T \sum_{k=1}^d Z_s [\s_k,\s_j](s,X_s) dW^k_s \Big|^2 < \frac{1}{12 m^2 \d} \Big)\\
&+\PR\Big(\sup_{0\leq t\leq \d}  |\int_0^t \xi^T \sum_{k=1}^d Z_s [\s_k,\s_j](s,X_s) dW_s^k |^2 \geq \frac{1}{12 m^2 \d},
\int_0^\d |\xi^T \sum_{k=1}^d Z_t [\s_k,\s_j](t,X_t)|^2 dt <\frac{ \ve}{\d}  \Big)
\end{align*}
Set $u_s=(\xi^T Z_s [\s_k,\s_j](s,X_s))_{k=1,\dots, d}$. From the exponential martingale inequality we have
\be{mart}
\begin{split}
&\PR\Big(\sup_{0\leq t\leq\d }| \sum_{k=1}^d \int_0^t u^k_s dW^k_s |^2 \geq  \frac{1}{12 m^2 \d},\quad \int_0^\d |u_t|^2 dt < \frac{\ve}{\d}  \Big) \\
&\quad \leq
2\exp\left( -\frac{1}{ 12 m^2\d} \times \frac{\d}{ 2\ve} \right)
=2\exp\left( -\frac{1}{ 24  m^2 \ve} \right)<\ve^p,
\end{split}
\ee
the latter inequality holding for every $p>1$ and $\ve\leq \ve_0$.
We now define
$$
D:=\Big\{
\sup_{0\leq t\leq \d} |\int_0^t \xi^T \sum_{k=1}^d Z_s [\s_k,\s_j](s,X_s) dW_s^k |^2 < \frac{1}{12 m^2 \d}\Big\}
$$
and prove
\[
\PR\left( \left\{\int_0^\d |\xi^T Z_t \s_j(t,X_t) |^2 dt \leq \frac{\ve^q}{4m^2}\right\}
\cap D \right)\leq \ve^p
\]
which is equivalent to the desired estimate $\PR\left( \left\{\int_0^\d |\xi^T Z_t \s_j(t,X_t) |^2 dt \leq \ve^q\right\} \cap D \right)\leq \ve^p$.
From representation \eqref{devZphim}, for $\phi=\s_j$ we find
\[
Z_t \s_j(t,X_t)=  \s_j(0,x_0)+ \int_0^t \sum_{k=1}^d Z_s [\s_k,\s_j](s,X_s)dW^k_s
+R_t,
\]
with
\[
R_t= \int_0^t Z_s \left\{ [b,\s_j]+ \frac{1}{2} \sum_{k=1}^d [\s_k,[\s_k,\s_j ]]+\frac{\partial\s_j}{\partial s}\right\}(s,X_s)\,ds.
\]
From $(a+b+c)^2\geq a^2/3 - b^2-c^2$ and $|\xi\cdot \s_j(0,x_0)|\geq \frac{1}{m\d^{1/2}}$, for $\bar{t}\leq \d$ we can write
\[
\begin{split}
& \int_0^{\bar{t}} |\xi^T Z_t \s_j(t,X_t) |^2 dt \\
&\quad\geq
\frac{\bar{t} |\xi^T \s_j(0,x_0) |^2}{3} -
\int_0^{\bar{t}} | \sum_{k=1}^d  \int_0^t  \xi^T Z_s [\s_k,\s_j](s,X_s)dW^k_s|^2 dt
-\int_0^{\bar{t}} | \xi^T R_t|^2 dt \\
&\quad \geq
\frac{\bar{t}}{3 \d m^2}
- \int_0^{\bar{t}} | \sum_{k=1}^d \int_0^t \xi^T Z_s [\s_k,\s_j](s,X_s)dW^k_s|^2 dt
-\int_0^{\bar{t}} |\xi^T R_t|^2 dt.
\end{split}
\]
On the set $D$ one has
\[
\int_0^{\bar{t}} | \sum_{k=1}^d \int_0^t \xi^T Z_s [\s_k,\s_j](s,X_s)dW^k_s|^2 dt
\leq {\bar{t}}\frac{1}{12 m^2\d},
\]
so
\[
\begin{split}
\int_0^{\bar{t}} |\xi^T Z_t \s_j(t,X_t) |^2 dt \geq
\frac{\bar{t}}{4 m^2 \d} -
\int_0^{\bar{t}} | \xi^T R_t|^2 dt,
\end{split}
\]
that we rewrite as
\be{hplab}
\begin{split}
4 m^2 \int_0^{\bar{t}} |\xi^T Z_t \s_j(t,X_t) |^2 dt \geq
\frac{\bar{t} -
4 m^2 \d \int_0^{\bar{t}} | \xi^T R_t|^2 dt }{\d}.
\end{split}
\ee
We now set
\[
\mbox{$\displaystyle a_{\bar{t}}= 4 m^2 \int_0^{\bar{t}} |\xi^T Z_t \s_j(t,X_t) |^2 dt$ on the set $D$ and $a_{\bar{t}}=\bar{t}/\d$ on the set $D^c$,}
\]
$D^c$ denoting the complement of $D$.
Standard computations, considering also $|\xi|=|T_\a^* v|\leq |v| C/\d = C/\d$, give
$\E (\int_0^{\bar{t}} | \xi^T R_t|^2 dt)^q \leq C {\bar{t}}^{3 q}/\d^{2 q},$
so
$\E ( 4 \d m^2 \int_0^{\bar{t}} | \xi^T R_t|^2 dt)^q \leq C {\bar{t}}^{2 q} $, for $C\in \cal{C}$ (recall also $\bar{t} \leq \d$).
This estimate and \eqref{hplab} allow us to apply lemma \ref{covT} with $a_{\bar{t}}$ defined above and
\[
b_{\bar{t}}= 4 \d m^2 \int_0^{\bar{t}} | \xi^T R_t|^2 dt.
\]
We find
\[
\left\{ \int_0^\d |\xi^T Z_t \s_j(t,X_t) |^2 dt \leq \frac{\ve^q}{4m^2}\right\}
\cap D =\{ a_\d \leq \ve^q \} \cap D
\]
and we have
\[
\PR\Big( \Big\{ \int_0^\d |\xi^T Z_t \s_j(t,X_t) |^2 dt \leq \frac{\ve^q}{4m^2}\Big\}
\cap D \Big)
=\PR( \{a_\d \leq \ve^q\} \cap D )
\leq
\PR( a_\d \leq \ve^q )\leq \ve^p.
\]
We the obtain $I_1 <\ve^p $ for any $p>1$, for $\d\leq 1$, $\ve\leq \ve_0$.

\smallskip

Case B) $|\xi \cdot [\s_j,\s_l](t,x_0)|\geq \frac{1}{m\d}$ for some $j,l=1\dots d,\, j\neq l$. In this case we write
\[
I_1\leq \PR\left(\int_0^\d | \xi^T Z_t [\s_j,\s_l](t,X_t) |^2 dt \leq \frac{\ve}{\d} \right)
\]
From representation \eqref{devZphim} with $\phi=[\s_j,\s_l]$ we find
\[
Z_t [\s_j,\s_l](t,X_t)=[\s_j,\s_l](0,x_0)+R_t,
\]
with
\be{repR2}
\begin{split}
R_t & = \int_0^t Z_s \sum_{k=1}^d [\s_k,[\s_j,\s_l] ](s,X_s) dW_s^k\\
&+
\int_0^t Z_s \left\{ [b,[\s_j,\s_l]]+ \frac{1}{2} \sum_{k=1}^d [\s_k,[\s_k,[\s_j,\s_l] ]]+\frac{\partial [\s_j,\s_l]}{\partial s} \right\}(s,X_s)\, ds.
\end{split}
\ee
From $(a+b)^2\geq a^2/2-b^2$ and $|\xi\cdot[\s_j,\s_l](0,x_0)|\geq \frac{1}{m \d}$, for $\bar{t}\leq \d$ we have
\be{inR}
\int_0^{\bar{t}} |\xi^T Z_t [\s_j,\s_l](t,X_t)|^2 dt
\geq  \frac{ \bar{t} |\xi^T [\s_j,\s_l](0,x_0)|^2 }{2} - \int_0^{\bar{t}} |\xi^T R_t|^2 dt
\geq  \frac{ \bar{t} }{2\d^2 m^2} - \int_0^{\bar{t}} |\xi^T R_t|^2 dt.
\ee
We apply lemma \ref{covT} with
\[
a_{\bar{t}}=2m^2 \d \int_0^{\bar{t}} | \xi^T Z_s [\s_j,\s_l](s,X_s) |^2 ds\quad\mbox{and}\quad
b_{\bar{t}}=2m^2 \d^2 \int_0^{\bar{t}} |\xi^T R_t|^2 dt
\]
Indeed from \eqref{repR2} and $|\xi|\leq C/\d$,
\[
\E |b_{\bar{t}}|^q \leq C \bar{t}^2
\]
and from \eqref{inR} we have $a_{\bar{t}}\geq \frac{\bar t-b_{\bar t}}\delta$. So, we find $I_1<\ve^p$, for $\d\leq 1$, $\ve\leq \ve_0$.

We estimate now
\[
I_2=\PR\left( \sum_{i=1}^d \int_0^\d |\xi^T Z_s \s_i(s,X_s) |^2 ds \leq \ve^q,
\sum_{i,j=1}^d \int_0^\d |\xi^T Z_s [\s_i,\s_j](s,X_s) |^2 ds > \frac{\ve}{\d} \right).
\]
By using again \eqref{devZphim}, we find
\[
\begin{split}
\xi^T Z_t \s_i(t,X_t)
& = \s_i(0,x_0) +\sum_{j=1}^d \int_0^t \xi^T Z_s [\s_j,\s_i](s,X_s) dW^j_s \\
& + \int_0^t
\xi^T Z_s \left\{ [b,\s_i]+ \frac{1}{2} \sum_{j=1}^n [\s_j,[\s_j,\s_i]]+\frac{\partial \s_i}{\partial s} \right\}(s,X_s) ds.
\end{split}
\]
For $t_0= \d$ and from the fact that $|\xi|\leq \frac{C}{\d}$, we have
\[
\begin{split}
&\E [\sup_{0\leq s \leq \d} |\xi^T Z_s [\s_j,\s_i](s,X_s)|^p ] \leq \frac{C}{\d^p}, \quad C\in \C, \quad \mbox{and}\\
&\E \left[\sup_{0\leq s \leq \d} |\xi^T Z_s \left\{[b,\s_i]+ \frac{1}{2} \sum_{j=1}^d [\s_j,[\s_j,\s_i]]\right\}(s,X_s)|^p \right] \leq \frac{C}{\d^p}, \quad C\in \C.
\end{split}
\]
Thus we can apply Lemma \ref{norris} and we get
\[
\PR\left( \sum_{i=1}^d \int_0^\d |\xi^T Z_s \s_i(s,X_s) |^2 ds \leq \ve^q \mbox{ and }
\sum_{i,j=1}^d \int_0^\d |\xi^T Z_s [\s_i,\s_j](s,X_s) |^2 ds > \frac{\ve}{\d} \right)\leq \ve^p
\]
for any $p>1$, $\d\leq 1$ for $\ve\leq \ve_0$. We have now both the estimates of $I_1$ and $I_2$, so we have
$\sup_{|v|=1} \PR(\langle \bar{\g}_F v,v \rangle\leq \ve^q)\leq \ve^p$ for
$p>1$, $\d\leq 1$ for $\ve\leq \ve_0$, and the statement holds.
\epr

\subsection{Upper bound for the density of $X_\d$}

\bt{upper-th}
Let Assumption \ref{assumption1} and \ref{assumption2} hold. Let $p_{X_t}$ denote the density of $X_t$, $t>0$. Then, for any $p>1$, there exists a positive constant $C\in \cal{C}$ such that for every $\d\leq 1$ and for every $y\in \R^n$
\[
p_{X_\d}(y)\leq  \frac{1}{\d^{n-\frac{\dim\langle \s(0,x_0) \rangle}{2}}
} \frac{C}{1+|y-x_0|_{A_\d(0,x_0)}^p}.
\]
Again, $\dim\langle \s(0,x_0) \rangle$ denotes the dimension of the vector space spanned by $\sigma_1(0,x_0),\ldots,$ $\sigma_d(0,x_0)$.
\et

\bpr
Set $F=T_\alpha(X_\delta-x_0)$. We apply estimate \eqref{generalupperbound}: there exist constants $p$ and $a$  depending only on the dimension $n$, such that
\[
p_F(z) \leq C \max\{1,\E |\l_* (\g_F)|^{-p} \|F\|_{2,p}  \} \PR(|F-z|<2)^{a}.
\]
We first show that $\|F\|_{2,p}\leq C\in \cal{C}$, as a consequence of Assumption \ref{assumption1}. We prove just that $\|F\|_p\leq C$ for every $p$, for the Malliavin derivatives the proof is heavier but analogous. We write
$$
F=T_\a \Big( \sum_{j=1}^{d}
\int_0^\d \s_{j}(t,X_{t})\circ dW_{t}^{j}+ \int_0^\d b(t,X_{t})dt
\Big)
=T_\a \Big(
\sum_{j=1}^{d} \s_{j}(0,x_0) W_\d^{j} +
B_\d \Big),
$$
where
\[
B_\d=
\sum_{j=1}^{d} \int_0^\d \big(\s_{j}(t,X_{t})-\s_{j}(0,x_0)\big) \circ dW_{t}^{j}+ \int_0^\d b(t,X_{t})dt.
\]
Therefore
\be{boundF}
|F|
\leq
\sum_{j=1}^{d} |T_\a \s_{j}(0,x_0) W_\d^{j} | +
| T_\a B_\d |.
\ee
\eqref{p3} implies $ |T_\a \s_{j}(0,x_0) W_\d^{j} | \leq C W_\d^{j}/\sqrt{\d}$, for $j=1,\dots,d$.
Moreover $| T_\a B_\d |\leq |B_\d|_{A_\d}\leq C |B_\d|/\d$. If assumption \ref{assumption1} holds we conclude that $\E |F|^p \leq C\in \cal{C}$.

As in \cite{BC14}, Remark 2.4, it is easy to reduce the estimate of $\PR(|F-z|<2)$ to the tail estimate of $F$, and then to use Markov inequality to relate the estimate of the tails to the  moments of $F$:
\be{tails}
\PR(|F-z|<2)\leq \PR(|F|>|z|/2) \leq C \frac{1\vee \E |F|^p}{1+|z|^p},\quad \forall z\in \R^n
\ee
Since, from Assumption \ref{assumption1}, all the moments of $F$ are bounded by constants in $\cal{C}$, we have that for any exponent $p>1$ this term decays faster than $|z|^{-p}$ for $|z|\rightarrow \infty$.

In Lemma \ref{gammaF} we have already proved that $\E |\l_*(\g_F)|^{-q}\leq C\in \cal{C}$, for $\d \leq 1$. We conclude that $p_F(z)\leq \frac{C}{1+|z|^p}$. The upper bound for the density of $X_\d$ comes from the simple change of variable
$y=x_0+\a z$. For a positive and bounded measurable function $f:\R^n\rightarrow \R$, we write
\[
\E f(X_\d)=\E f(x_0 + \a F)
= \int f(x_0 +\a z) p_F(z) dz
\]
and we apply our density estimate, so that
\[
\E f(X_\d)\leq \int  \frac{C f(x_0 +\a z)}{1+|z|^p} dz
\leq \frac{C}{|\det\a|} \int \frac{f(y)}{1+|x_0-y|_{A_\d(0,x_0)}^p} dy,
\]
in which we have used \eqref{p1}. Concerning $|\det \a|$, we recall \eqref{CB} and we obtain
\[
p_{X_\d}(y)\leq  \frac{1}{\d^{n-\frac{dim \langle \s(0,x_0) \rangle}{2}}} \frac{C}{1+|x_0-y|_{A_\d(0,x_0)}^p}.
\]
\epr

\br{2}
If Assumption \ref{assumption1bis} holds then the upper estimate in Theorem \ref{upper-th} is of exponential type:
there exists a constant $C\in \cal{C}$ such that for every $\d\leq 1$ and for every $y\in \R^n$
\[
p_{X_\d}(y)\leq  \frac{C}{\d^{n-\frac{dim \langle \s(0,x_0) \rangle}{2}}} \exp(- \frac{1}{C}|y-x_0|_{A_\d(0,x_0)}) .
\]
The proof is identical to the previous one except for the last part. In fact, looking at \eqref{boundF}, in this case the boundedness of the coefficients allows one to apply the exponential martingale inequality, so instead of \eqref{tails} we obtain the exponential bound $\PR(|F|>|y|/2)\leq C\exp(-|y|/C)$. This actually gives the proof of (3) in Theorem \ref{main-th}.
\er

\br{3}
In Theorem \ref{lower-th} the lower bound is centered at $x_0+\d b(x_0)$ but for the upper estimate in Theorem \ref{upper-th}, one can choose to center at $x_0$ or at $x_0+\d b(x_0)$. In fact, in this case we notice that
\[
|\d b(x_0)|_{A_\d(0,x_0)}\leq \frac{C'}{\d} |\d b(x_0)|\leq C'',
\]
so
\[
\frac{C_1}{1+|x_0-y|_{A_\d(0,x_0)}}\leq
\frac{C_2}{1+|x_0+\d b(x_0)-y|_{A_\d(0,x_0)}}\leq
\frac{C_3}{1+|x_0-y|_{A_\d(0,x_0)}},
\]
and the estimate of Theorem \ref{upper-th} can be equivalently written as
\[
p_{X_\d}(y)\leq  \frac{1}{\d^{n-\frac{\dim\langle \s(0,x_0) \rangle}{2}}
} \frac{C}{1+|y-x_0-\d b(x_0)|_{A_\d(0,x_0)}^p}.
\]
\er

\begin{remark}
Theorem \ref{upper-th} can be seen as an improvement of the upper bound in \cite{KusuokaStroock:85} in the sense that it precisely identifies the exponent
$n-\frac{\dim \langle \s(0,x_0) \rangle}{2}$, which accounts of the time-scale of the heat kernel when $\d$ goes to zero. This is evident when we consider the diagonal estimate $y=x_0$, and the same consideration holds when $y$ is close to $x_0$. When looking at the tails ($y$ far from $x_0$), it is not clear which of the two upper bounds is more accurate, unless we further specify the model.
\end{remark}

\appendix

\section{Proof of Lemma \ref{decZ-l}}\label{app-dec}

We prove the decomposition \eqref{decZ} in Lemma \ref{decZ-l}.
We recall $Z_t$ in \eqref{Decomp0}:
\[
Z_t=\sum_{i=1}^{d}a_{i}W_{t}^{i}+\sum_{i,j=1}^{d}a_{i,j}%
\int_{0}^{t}W_{s}^{i}\circ dW_{s}^{j}
\]
with $a_{i}=\sigma _{i}(0,x_0)$, $a_{i,j}=\partial _{\sigma _{i}}\sigma _{j}(0,x_0)$.
Setting $s_l=\frac{l}d\,\delta$, $l=1,\ldots,d$, we have
\begin{equation*}
Z_\delta=\sum_{l=1}^{d}Z(s_{l})-Z(s_{l-1})=\sum_{l=1}^{d}\left(
\sum_{i=1}^{d}a_{i}\Delta
_{l}^{i}+\sum_{i,j=1}^{d}a_{i,j}\int_{s_{l-1}}^{s_{l}}W_{s}^{i}\circ
dW_{s}^{j}\right).
\end{equation*}%
Recalling the quantities $\Delta_{l}^{j}$ and $\Delta _{l}^{i,j}$ in \eqref{delta1}, we write
\begin{equation*}
\int_{s_{l-1}}^{s_{l}}W_{s}^{i}\circ dW_{s}^{j}=W_{s_{l-1}}^{i}\Delta
_{l}^{j}+\Delta _{l}^{i,j}=(\sum_{p=1}^{l-1}\Delta _{p}^{i})\Delta
_{l}^{j}+\Delta _{l}^{i,j}.
\end{equation*}%
Then%
\begin{equation*}
Z_\delta=\sum_{l=1}^{d}\sum_{i=1}^{d}a_{i}\Delta
_{l}^{i}+\sum_{l=1}^{d}\sum_{i,j=1}^{d}a_{i,j}(\sum_{p=1}^{l-1}\Delta
_{p}^{i})\Delta _{l}^{j}+\sum_{l=1}^{d}\sum_{i,j=1}^{d}a_{i,j}\Delta
_{l}^{i,j}=:S_{1}+S_{2}+S_{3}.
\end{equation*}%
Notice first that%
\begin{equation*}
S_{1}=\sum_{l=1}^{d}a_{l}\Delta _{l}^{l}+\sum_{l=1}^{d}\sum_{i\neq
l}a_{i}\Delta _{l}^{i}.
\end{equation*}%
\bigskip We treat now $S_{3}.$ We will use the identities%
\begin{equation*}
\left\vert \Delta _{l}^{i}\right\vert ^{2}=2\Delta _{l}^{i,i}\quad \mbox{ and } \quad
\Delta _{l}^{i}\Delta _{l}^{j}=\Delta _{l}^{i,j}+\Delta _{l}^{j,i}.
\end{equation*}%
Then%
\begin{eqnarray*}
S_{3} &=&\sum_{l=1}^{d}\sum_{i=1}^{d}a_{i,i}\Delta
_{l}^{i,i}+\sum_{l=1}^{d}\sum_{i\neq j}a_{i,j}\Delta _{l}^{i,j} \\
&=&\sum_{l=1}^{d}\sum_{i=1}^{d}a_{i,i}\Delta
_{l}^{i,i}+\sum_{l=1}^{d}\sum_{i\neq l}a_{i,l}\Delta
_{l}^{i,l}+\sum_{l=1}^{d}\sum_{j\neq l}a_{l,j}\Delta
_{l}^{l,j}+\sum_{l=1}^{d}\sum_{i\neq j,i\neq lj\neq l}a_{i,j}\Delta
_{l}^{i,j} \\
&=&\frac{1}{2}\sum_{l=1}^{d}\sum_{i=1}^{d}a_{i,i}\left\vert \Delta
_{l}^{i}\right\vert ^{2}+\sum_{l=1}^{d}\sum_{i\neq l}a_{i,l}\Delta _{l}^{i,l}
\\
&&+\sum_{l=1}^{d}\sum_{j\neq l}a_{l,j}\left( \Delta _{l}^{j}\Delta
_{l}^{l}-\Delta _{l}^{j,l}\right) +\sum_{l=1}^{d}\sum_{i\neq j,i\neq l,j\neq
l}a_{i,j}\Delta _{l}^{i,j} \\
&=&\frac{1}{2}\sum_{i=1}^{d}a_{i,i}\left\vert \Delta _{i}^{i}\right\vert
^{2}+\frac{1}{2}\sum_{l=1}^{d}\sum_{i\neq l}^{d}a_{i,i}\left\vert \Delta
_{l}^{i}\right\vert ^{2}+\sum_{l=1}^{d}\sum_{i\neq l}(a_{i,l}-a_{l,i})\Delta
_{l}^{i,l} \\
&&+\sum_{l=1}^{d}\left( \sum_{j\neq l} a_{l,j}\Delta _{l}^{j}\right) \Delta
_{l}^{l}+\sum_{l=1}^{d}\sum_{i\neq j,i\neq l,\neq j\neq }a_{i,j}\Delta
_{l}^{i,j}.
\end{eqnarray*}%
We treat now $S_{2}.$ We want to emphasize the terms containing $\Delta
_{i}^{i}.$ We have
\begin{equation*}
S_{2}=
\sum_{l>p}^{d}
\sum_{i,j=1}^{d}a_{i,j}\Delta _{p}^{i}\Delta
_{l}^{j}=S_{2}^{\prime }+S_{2}^{\prime \prime }+S_{2}^{\prime \prime \prime
}+S_{2}^{iv}
\end{equation*}%
with
\begin{eqnarray*}
S_{2}^{\prime } &=&\sum_{l>p}^{d}a_{p,l}\Delta _{p}^{p}\Delta _{l}^{l},\quad
S_{2}^{\prime \prime }=\sum_{l>p}^{d}\sum_{j\neq l}a_{p,j}\Delta
_{p}^{p}\Delta _{l}^{j} \\
S_{2}^{\prime \prime \prime } &=&\sum_{l>p}^{d}\sum_{i\neq
p}^{d}a_{i,l}\Delta _{p}^{i}\Delta _{l}^{l},\quad
S_{2}^{iv}=\sum_{l>p}^{d}\sum_{i\neq p,j\neq l} a_{i,j}\Delta
_{p}^{i}\Delta _{l}^{j}.
\end{eqnarray*}%
We have
\begin{equation*}
S_{2}^{\prime \prime }=\sum_{p=1}^{d}\Delta _{p}^{p}\left(
\sum_{l=p+1}^{d}\sum_{j\neq l} a_{p,j}\Delta _{l}^{j}\right)
\end{equation*}%
and
\begin{equation*}
S_{2}^{\prime \prime \prime }=\sum_{l=1}^{d}\Delta _{l}^{l}\left(
\sum_{p=1}^{l-1}\sum_{i\neq p} a_{i,l}\Delta _{p}^{i}\right)
=\sum_{p=1}^{d}\Delta _{p}^{p}\left( \sum_{l=1}^{p-1}\sum_{j\neq
l} a_{j,p}\Delta _{l}^{j}\right)
\end{equation*}%
so that%
\begin{equation*}
S_{2}^{\prime \prime }+S_{2}^{\prime \prime \prime }=\sum_{p=1}^{d}\Delta
_{p}^{p}\left( \sum_{l=p+1}^{d}\sum_{j\neq l} a_{p,j}\Delta
_{l}^{j}+\sum_{l=1}^{p-1}\sum_{j\neq l} a_{j,p}\Delta _{l}^{j}\right) .
\end{equation*}
Finally%
\begin{eqnarray*}
Z_\delta &=&\sum_{l=1}^{d}a_{l}\Delta _{l}^{l}+\sum_{l=1}^{d}\sum_{i\neq
l}a_{i}\Delta _{l}^{i} \\
&&+\sum_{l>p}^{d}a_{p,l}\Delta _{p}^{p}\Delta _{l}^{l}+\sum_{p=1}^{d}\Delta
_{p}^{p}\left( \sum_{l>p}^{d}\sum_{j\neq l} a_{p,j}\Delta
_{l}^{j}+\sum_{p>l}^{d}\sum_{j\neq l} a_{j,p}\Delta _{l}^{j}\right) \\
&&+\sum_{l>p}^{d}\sum_{i\neq p,j\neq l} a_{i,j}\Delta _{p}^{i}\Delta
_{l}^{j}+\frac{1}{2}\sum_{i=1}^{d}a_{i,i}\left\vert \Delta
_{i}^{i}\right\vert ^{2}+\frac{1}{2}\sum_{l=1}^{d}\sum_{i\neq
l} a_{i,i}\left\vert \Delta _{l}^{i}\right\vert ^{2} \\
&&+\sum_{l=1}^{d}\sum_{i\neq l}(a_{i,l}-a_{l,i})\Delta
_{l}^{i,l}+\sum_{l=1}^{d}\left( \sum_{j\neq l}a_{l,j}\Delta _{l}^{j}\right)
\Delta _{l}^{l}+\sum_{l=1}^{d}\sum_{i\neq j,i\neq l,j\neq l}a_{i,j}\Delta
_{l}^{i,j}.
\end{eqnarray*}%
We want to compute the coefficient of $\Delta _{p}^{p}:$ this term appears
in $\sum_{p=1}^{d}\Delta _{p}^{p}(a_{p}+\varepsilon _{p})$, with
\[
\varepsilon _{p} =\sum_{l>p}^{d}\sum_{j\neq l} a_{p,j}\Delta
_{l}^{j}+\sum_{p>l}^{d}\sum_{j\neq l} a_{j,p}\Delta _{l}^{j}+\sum_{j\neq
p}a_{p,j}\Delta _{p}^{j}.
\]
We consider now $\Delta _{p}^{i,p}.$ It appears in
\begin{equation*}
\sum_{p=1}^{d}\sum_{i\neq p}(a_{i,p}-a_{p,i})\Delta _{p}^{i,p}
\end{equation*}%
The vector $a_{i,p}-a_{p,i}$ corresponds to the bracket $[\s_i,\s_p](0,x)$. Notice that for $l=l(i,p)$ when $i\neq p$, then $[\s_i,\s_p](0,x)=A_l(0,x)$, $A_l(0,x)$ being the $l$th column of $A(0,x)$. The other terms are%
\begin{eqnarray*}
&&\sum_{l=1}^{d}\sum_{i\neq l}a_{i}\Delta _{l}^{i}+\sum_{l>p}^{d}\sum_{i\neq
p,j\neq l} a_{i,j}\Delta _{p}^{i}\Delta _{l}^{j}+\frac{1}{2}%
\sum_{i=1}^{d}a_{i,i}\left\vert \Delta _{i}^{i}\right\vert ^{2}+\frac{1}{2}%
\sum_{l=1}^{d}\sum_{i\neq l} a_{i,i}\left\vert \Delta _{l}^{i}\right\vert
^{2} \\
&&+\sum_{l=1}^{d}\sum_{i\neq j,i\neq l,j\neq l}a_{i,j}\Delta
_{l}^{i,j}+\sum_{l=p+1}^{d}a_{p,l}\Delta _{p}^{p}\Delta _{l}^{l}.
\end{eqnarray*}%
We put everything together and \eqref{decZ} is proved.

\section{Support property}
\label{app-supp}

The aim of this section is the proof of the inequality in \eqref{Sup2}, which has been strongly used in Lemma \ref{lemma-Lambda}.

Let $B=(B^{1},...,B^{d-1})$ be
a standard Brownian motion. We consider the analogous of the covariance
matrix $Q_{i}(B)$ considered in Section \ref{lower-key}: we define a symmetric
square matrix of dimension $d\times d$ by
\begin{equation}\label{Q-app}
\begin{array}{l}
Q^{d,d} =1,\quad Q^{d,j}=Q^{j,d}=\int_{0}^{1}B_{s}^{j}ds,\quad j=1,...,d-1,
\smallskip\\
Q^{j,p} =Q^{p,j}=\int_{0}^{1}B_{s}^{j}B_{s}^{p}ds,\quad j,p=1,...,d-1
\end{array}
\end{equation}
and we denote by $\l_*(Q)$ (respectively by $\l^*(Q)) $ the lowest (respectively largest) eigenvalue of $Q$.

%\begin{remark}
%In order to avoid notational confusion we recall that for a general matrix $%
%M $ we have denoted $\lambda _{\ast }(M)$ the lower eigenvalue of $MM^{\ast
%}.$ With this notation we have $\lambda _{\ast }(B)=\lambda _{\ast
%}^{1/2}(Q).$ But this does not come on in our reasoning.
%\end{remark}

For a measurable function $g:[0,1]\rightarrow R^{d-1}$ we denote%
\begin{eqnarray*}
\alpha_{g}(\xi ) &=&\xi _{d}+\int_{0}^{1}\left\langle g_{s},\xi _{\ast
}\right\rangle ds,\quad \beta_{g}(\xi )=\int_{0}^{1}\left\langle g_{s},\xi
_{\ast }\right\rangle ^{2}ds-\left( \int_{0}^{1}\left\langle g_{s},\xi
_{\ast }\right\rangle ds\right) ^{2}\quad with \\
\xi &=&(\xi _{1},...,\xi _{d})\in \R^{d}\quad and\quad \xi _{\ast }=(\xi
_{1},...,\xi _{d-1}).
\end{eqnarray*}

We need the following two preliminary lemmas.

\begin{lemma}
\label{SUPORT1}With $g(s)=B_{s},s\in \lbrack 0,1]$ we have%
\begin{equation*}
\left\langle Q\xi ,\xi \right\rangle =\alpha_{B}^{2}(\xi )+\beta_{B}(\xi ).
\end{equation*}%
As a consequence, one has
\begin{equation*}
\lambda_*(Q)=\inf_{\left\vert \xi \right\vert
=1}(\alpha_{B}^{2}(\xi )+\beta_{B}(\xi ))\quad and\quad \l^*
(Q)\leq \sup_{\left\vert \xi \right\vert =1}(\alpha_{B}^{2}(\xi
)+\beta_{B}(\xi ))\leq \big(1+\sup_{t\leq 1}\left\vert B_{t}\right\vert\big)%
^2.
\end{equation*}%
Taking $\xi _{\ast }=0$ and $\xi _{d}=1$ we obtain $\left\langle Q\xi ,\xi
\right\rangle =1$ so that $\l_*(Q)\leq 1\leq \l^*(Q). $
\end{lemma}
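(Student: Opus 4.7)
The plan is essentially to verify the identity by direct expansion, and then to read off the eigenvalue bounds from the Rayleigh quotient.

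First I would split $\xi = (\xi_\ast, \xi_d) \in \R^{d-1} \times \R$ and compute $\langle Q\xi, \xi\rangle$ blockwise using the definition of $Q$:
\begin{equation*}
\langle Q\xi, \xi\rangle = Q^{d,d}\xi_d^2 + 2\xi_d \sum_{j=1}^{d-1} Q^{d,j}\xi_j + \sum_{j,p=1}^{d-1} Q^{j,p}\xi_j \xi_p.
\end{equation*}
Plugging in $Q^{d,d}=1$, $Q^{d,j}=\int_0^1 B_s^j\,ds$ and $Q^{j,p}=\int_0^1 B_s^j B_s^p\,ds$, the right-hand side collapses (after pulling the finite sums inside the integrals and using $\langle B_s,\xi_\ast\rangle = \sum_{j=1}^{d-1}\xi_j B_s^j$) to
\begin{equation*}
\xi_d^2 + 2\xi_d\!\int_0^1\!\langle B_s,\xi_\ast\rangle\,ds + \int_0^1\!\langle B_s,\xi_\ast\rangle^2\,ds.
\end{equation*}
I would then recognize this as $\bigl(\xi_d+\int_0^1\langle B_s,\xi_\ast\rangle ds\bigr)^2 + \bigl(\int_0^1\langle B_s,\xi_\ast\rangle^2 ds - (\int_0^1\langle B_s,\xi_\ast\rangle ds)^2\bigr) = \alpha_B^2(\xi)+\beta_B(\xi)$, which is the claimed identity.

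Once this identity is in hand, the characterization of $\lambda_\ast(Q)$ and $\lambda^\ast(Q)$ as the minimum and maximum of the Rayleigh quotient (using that $Q$ is symmetric and positive semidefinite, which is now manifest since $\alpha_B^2+\beta_B\geq 0$) immediately gives the first two displayed equalities/inequalities. For the quantitative upper bound, I would estimate $|\langle B_s,\xi_\ast\rangle|\leq \sup_{t\leq 1}|B_t|\cdot|\xi_\ast|$ and hence
\begin{equation*}
\alpha_B^2(\xi)+\beta_B(\xi) \leq \bigl(|\xi_d| + |\xi_\ast|\sup_{t\leq 1}|B_t|\bigr)^2 \leq |\xi|^2\bigl(1+\sup_{t\leq 1}|B_t|\bigr)^2
\end{equation*}
by Cauchy--Schwarz on the pair $(|\xi_d|,|\xi_\ast|)$ against $(1,\sup_{t\leq 1}|B_t|)$. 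Taking the supremum over $|\xi|=1$ yields the stated bound.

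Finally, for the last sentence of the lemma, I would simply test with $\xi_\ast=0$, $\xi_d=1$: then $|\xi|=1$, $\alpha_B(\xi)=1$, $\beta_B(\xi)=0$, so $\langle Q\xi,\xi\rangle=1$, which forces $\lambda_\ast(Q)\leq 1\leq\lambda^\ast(Q)$ via the Rayleigh quotient. There is no real obstacle here; this is a bookkeeping lemma whose only content is identifying the right quadratic form, and all the subsequent estimates are two-line consequences of Cauchy--Schwarz and the variational characterization of eigenvalues.
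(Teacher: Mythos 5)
Your proof is correct and follows essentially the same route as the paper's: expand $\langle Q\xi,\xi\rangle$ blockwise, complete the square to obtain $\alpha_B^2(\xi)+\beta_B(\xi)$, and then read off the spectral bounds from the Rayleigh quotient. The paper carries out only the algebraic identity and dismisses the rest as "straightforward"; you have usefully filled in those details (positive semidefiniteness from $\alpha_B^2+\beta_B\geq 0$, the variational characterization of $\lambda_\ast$ and $\lambda^\ast$, and the quantitative bound via $|\langle B_s,\xi_\ast\rangle|\leq\sup_{t\leq 1}|B_t|\,|\xi_\ast|$). The only tiny imprecision is in the last Cauchy--Schwarz step: pairing $(|\xi_d|,|\xi_\ast|)$ against $(1,\sup_{t\leq 1}|B_t|)$ gives $|\xi|^2\bigl(1+\sup_{t\leq 1}|B_t|^2\bigr)$, which is then dominated by $|\xi|^2\bigl(1+\sup_{t\leq 1}|B_t|\bigr)^2$ since cross terms are nonnegative; alternatively one can avoid Cauchy--Schwarz entirely by bounding $|\xi_d|\leq|\xi|$ and $|\xi_\ast|\leq|\xi|$ directly. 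This does not affect the conclusion.
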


\bpr By direct computation%
\begin{eqnarray*}
\left\langle Q\xi ,\xi \right\rangle &=&\xi _{d}^{2}+2\xi
_{d}\int_{0}^{1}\left\langle B_{s},\xi _{\ast }\right\rangle ds+\left(
\int_{0}^{1}\left\langle B_{s},\xi _{\ast }\right\rangle ds)\right) ^{2} \\
&&+\int_{0}^{1}\left\langle B_{s},\xi _{\ast }\right\rangle ^{2}ds-\left(
\int_{0}^{1}\left\langle B_{s},\xi _{\ast }\right\rangle ds\right) ^{2} \\
&=&\left( \xi _{d}+\int_{0}^{1}\left\langle B_{s},\xi _{\ast }\right\rangle
ds\right) ^{2}+\int_{0}^{1}\left\langle B_{s},\xi _{\ast }\right\rangle
^{2}ds-\left( \int_{0}^{1}\left\langle B_{s},\xi _{\ast }\right\rangle
ds\right) ^{2}.
\end{eqnarray*}%
The remaining statements follow straightforwardly.
\epr

\begin{proposition}
\label{SUPORT2}For each $p\geq 1$ one has%
\begin{equation}
\E(\left\vert \det Q\right\vert ^{-p})\leq C_{p,d}<\infty  \label{Sup1}
\end{equation}%
where $C_{p,d}$ is a constant depending on $p,d$ only.
\end{proposition}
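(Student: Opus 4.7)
The plan is to reduce the bound on $\det Q^{-p}$ to a super-polynomial small-ball estimate on the minimal eigenvalue $\lambda_\ast(Q)$. Since every eigenvalue of $Q$ is at least $\lambda_\ast(Q)$, one has $\det Q \geq \lambda_\ast(Q)^d$, so it suffices to prove $\E[\lambda_\ast(Q)^{-q}]<\infty$ for every $q\geq 1$. Using the layer-cake formula $\E[X^{-q}] = q\int_0^\infty t^{-q-1}\PR(X\leq t)\,dt$ together with $\lambda_\ast(Q)\leq 1$ from Lemma \ref{SUPORT1}, this in turn reduces to showing that for every $N\geq 1$ there exist $C_N,\varepsilon_N>0$ with $\PR(\lambda_\ast(Q)<\varepsilon)\leq C_N\varepsilon^N$ for $0<\varepsilon\leq\varepsilon_N$.

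For a fixed unit vector $\xi=(\xi_\ast,\xi_d)\in\R^d$, the computation in the proof of Lemma \ref{SUPORT1} (completing the square) yields the clean identity $\langle Q\xi,\xi\rangle = \int_0^1\bigl(\xi_d+\langle B_s,\xi_\ast\rangle\bigr)^2\,ds$. I would split into two regimes depending on $|\xi_\ast|$, with a small parameter $\tau>0$ fixed later. If $|\xi_\ast|\leq\varepsilon^\tau$, then $|\xi_d|\geq 1/2$, and on the event $\{\sup_{s\in[0,1]}|B_s|\leq\varepsilon^{-\tau/2}\}$ (whose complement has super-polynomial Gaussian tail) the integrand is at least $1/4$, so $\langle Q\xi,\xi\rangle\geq 1/16 \gg\varepsilon$. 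If $|\xi_\ast|>\varepsilon^\tau$, write $\xi_\ast=r\eta$ with $r=|\xi_\ast|$ and $|\eta|=1$; minimizing in the scalar shift $\xi_d$ gives
\[
\langle Q\xi,\xi\rangle \geq r^2 \int_0^1 (\beta_s-\bar\beta)^2\,ds, \qquad \beta_s:=\langle B_s,\eta\rangle,\quad \bar\beta:=\int_0^1\beta_u\,du.
\]
For each unit $\eta$, $\beta$ is a standard one-dimensional Brownian motion. The classical Brownian small-ball estimate, obtained from the explicit Laplace transform $\E[\exp(-\lambda\int_0^1\beta_s^2\,ds)]=(\cosh\sqrt{2\lambda})^{-1/2}$ and Chebyshev, yields $\PR(\int_0^1(\beta_s-\bar\beta)^2\,ds<\delta)\leq C\exp(-c/\delta)$, which decays faster than any polynomial. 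Combining the two regimes gives $\PR(\langle Q\xi,\xi\rangle<\varepsilon)\leq C_N\varepsilon^N$, uniformly in unit $\xi$.

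To pass from a fixed direction to the infimum over the unit sphere, I would use a $\delta$-net $\mathcal{N}_\delta$ of the unit sphere in $\R^d$ with $|\mathcal{N}_\delta|=O(\delta^{-(d-1)})$. On the event $\{\sup|B|\leq\varepsilon^{-\tau/2}\}$, the map $\xi\mapsto\langle Q\xi,\xi\rangle$ is $O(\varepsilon^{-\tau})$-Lipschitz, since $\lambda^\ast(Q)\leq(1+\sup|B|)^2$ by Lemma \ref{SUPORT1}. Choosing $\delta=\varepsilon^{1+\tau}$ and applying the union bound over $\mathcal{N}_\delta$ yields
\[
\PR(\lambda_\ast(Q)<\varepsilon) \leq C\varepsilon^{-(d-1)(1+\tau)}\sup_{|\xi|=1}\PR(\langle Q\xi,\xi\rangle<2\varepsilon)+\PR(\sup|B|>\varepsilon^{-\tau/2}),
\]
and both terms are super-polynomial in $\varepsilon$, completing the proof.

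The main obstacle is the second regime of the fixed-direction small-ball estimate: establishing a quantitative super-polynomial decay for $\PR(\|\beta-\bar\beta\|_{L^2[0,1]}^2<\delta)$ that is uniform in the direction $\eta$. This rests on the precise Laplace-transform small-ball asymptotics for Brownian motion, and one must verify that removing the mean does not destroy the exponential-type decay. A secondary technical point is balancing the $\varepsilon$-dependent deterioration of the Lipschitz constant against the cardinality of the net, which is taken care of by the free parameter $\tau$ and by the fact that the fixed-direction bound has arbitrary polynomial decay to spare.
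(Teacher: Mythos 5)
Your route is genuinely different from the paper's, and the overall architecture is sound, but the step you yourself flag as ``the main obstacle'' is a real gap that the paper resolves in a specific way you would need to borrow.

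The paper does not go through $\lambda_\ast(Q)$ at all. It starts from the integral representation $|\det Q|^{-p}\leq \Gamma(p)^{-1}\int_{\R^d}|\xi|^{d(2p-1)}e^{-\langle Q\xi,\xi\rangle}\,d\xi$ (Lemma 7-29 in [BGJ]), uses the completed-square identity from Lemma~\ref{SUPORT1} to integrate out $\xi_d$ as a Gaussian, and is then left with a $\xi_\ast$-integral of $\E(e^{-2\beta_B(\xi_\ast)})$ (after Cauchy--Schwarz). For this it cites the explicit formula of Donati-Martin and Yor [DY]: writing $\beta_B(\xi_\ast)=|\xi_\ast|^2 V_{\xi_\ast}$ with $V_{\xi_\ast}$ the time-variance of a one-dimensional Brownian motion, $\E(e^{-2|\xi_\ast|^2 V_{\xi_\ast}})=\frac{2|\xi_\ast|^2}{\sinh 2|\xi_\ast|^2}$, whose exponential decay in $|\xi_\ast|^2$ makes the outer integral converge. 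This is shorter and avoids nets, Lipschitz control of the quadratic form, and the balancing of $\tau$ against the net cardinality entirely. Your approach, reducing to a super-polynomial small-ball bound on $\lambda_\ast(Q)$ via a $\delta$-net, the layer-cake formula, and the clean identity $\langle Q\xi,\xi\rangle=\int_0^1(\xi_d+\langle B_s,\xi_\ast\rangle)^2\,ds$, is a reasonable and self-contained alternative; the two-regime split in $|\xi_\ast|$, the Gaussian tail bound for $\sup|B|$, the net bound, and the bookkeeping with $\tau$ are all correct.

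The gap: you quote the Cameron--Martin Laplace transform for $\int_0^1\beta_s^2\,ds$ and then assert the small-ball estimate $\PR\big(\int_0^1(\beta_s-\bar\beta)^2\,ds<\delta\big)\leq Ce^{-c/\delta}$, noting that ``one must verify that removing the mean does not destroy the exponential-type decay.'' This does not follow from the $\int\beta_s^2\,ds$ Laplace transform (removing the mean can only shrink the $L^2$ norm, so the inequality goes the wrong way); you need the Laplace transform of the \emph{variance} $V=\int_0^1(\beta_s-\bar\beta)^2\,ds$ itself, which is precisely what [DY] supplies and what the paper uses. So both proofs ultimately rest on the same probabilistic input — the explicit Laplace transform of the time-variance of Brownian motion — and that is the one step you would need to fill (either by citing [DY] as the paper does, or by a Karhunen--Lo\`eve spectral argument). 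Once that is in hand, your argument closes.
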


\bpr By Lemma 7-29, pg 92 in \cite{[BGJ]}, for every $p\in
(0,\infty )$ one has%
\begin{equation*}
\frac{1}{\left\vert \det Q\right\vert ^{p}}\leq \frac{1}{\Gamma (p)}%
\int_{R^{d}}\left\vert \xi \right\vert ^{d(2p-1)}e^{-\left\langle Q\xi ,\xi
\right\rangle }d\xi .
\end{equation*}%
Let $\theta (\xi _{\ast }):=\int_{0}^{1}\left\langle B_{s},\xi _{\ast
}\right\rangle ds.$\ Using the previous lemma%
\begin{eqnarray*}
\int_{R^{d}}\left\vert \xi \right\vert ^{d(2p-1)}e^{-\left\langle Q\xi ,\xi
\right\rangle }d\xi &=&\int_{R^{d}}(\xi _{d}^{2}+\left\vert \xi _{\ast
}\right\vert ^{2})^{d(2p-1)/2}e^{-(\xi _{d}+\theta (\xi _{\ast
}))^{2}-\beta_{B}(\xi _{\ast })}d\xi \\
&\leq &C\int_{R^{d-1}}((1+\theta ^{2}(\xi _{\ast }))^{d(2p-1)/2}+\left\vert
\xi _{\ast }\right\vert ^{d(2p-1)})e^{-\beta_{B}(\xi _{\ast })}d\xi _{\ast }
\\
&\leq &C\int_{R^{d-1}}\sup_{t\leq 1}1\vee \left\vert B_{t}\right\vert
^{d(2p-1)}(1+\left\vert \xi _{\ast }\right\vert
^{d(2p-1)+1})e^{-\beta_{B}(\xi _{\ast })}d\xi _{\ast }.
\end{eqnarray*}%
We integrate and we use Schwartz inequality in order to obtain%
\begin{equation*}
\E\Big(\frac{1}{\left\vert \det Q\right\vert ^{p}}\Big)\leq
C+C\int_{\{\left\vert \xi _{\ast }\right\vert \geq 1\}}(\E((1+\left\vert \xi
_{\ast }\right\vert ^{d(2p-1)+1})^{2}e^{-2\beta_{B}(\xi _{\ast
})}))^{1/2}d\xi _{\ast }.
\end{equation*}%
For each fixed $\xi _{\ast }$ the process $b_{\xi _{\ast }}(t):=\left\vert
\xi _{\ast }\right\vert ^{-1}\left\langle B_{t},\xi _{\ast }\right\rangle $
is a standard Brownian motion and $\beta_{B}(\xi _{\ast })=\left\vert \xi
_{\ast }\right\vert ^{2}\int_{0}^{1}(b_{\xi _{\ast }}(t)-\int_{0}^{1}b_{\xi
_{\ast }}(s)ds)^{2}dt=:\left\vert \xi _{\ast }\right\vert ^{2}V_{\xi _{\ast
}}$\ where $V_{\xi _{\ast }}$\ is the variance of $b_{\xi _{\ast }}$\ with
respect to the time.\ Then it is proved in \cite{[DY]} (see (1.f), p. 183)
that
\begin{equation*}
\E(e^{-2\beta_{B}(\xi _{\ast })})=\E(e^{-2\left\vert \xi _{\ast }\right\vert
^{2}V_{\xi _{\ast }}})=\frac{2\left\vert \xi _{\ast }\right\vert ^{2}}{\sinh
2\left\vert \xi _{\ast }\right\vert ^{2}}.
\end{equation*}%
We insert this in the previous inequality and we obtain $\E(\left\vert \det
Q\right\vert ^{-p})<\infty .$
\epr

\medskip

We are now able to give the main result in this section. We define%
\begin{equation}
q(B)=\sum_{i=1}^{d-1}\left\vert B_{1}^{i}\right\vert +\sum_{j\neq
p}\left\vert \int_{0}^{1}B_{s}^{j}dB_{s}^{p}\right\vert  \label{Sup4}
\end{equation}%
and for $\varepsilon ,\rho >0$ we denote
\begin{equation}
\Upsilon_{\rho ,\varepsilon }(B)=\{\det Q\geq \varepsilon ^{\rho
},\sup_{t\leq 1}\left\vert B_{t}\right\vert \leq \varepsilon ^{-\rho
},q(B)\leq \varepsilon \}.  \label{Sup3}
\end{equation}

\begin{proposition}
\label{SUPORT3}There exist some universal constants $c_{\rho ,d},\varepsilon
_{\rho ,d}\in (0,1)$ (depending on $\rho $ and $d$ only) such that for every
$\varepsilon \in (0,\varepsilon _{\rho ,d})$ one has%
\begin{equation}
\PR(\Upsilon_{\rho ,\varepsilon }(B))\geq c_{\rho ,d}\times \varepsilon ^{%
\frac{1}{2}d(d+1)}.  \label{Sup2}
\end{equation}
\end{proposition}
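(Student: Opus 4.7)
I decompose
\begin{equation*}
\PR(\Upsilon_{\rho,\varepsilon}) \geq \PR\bigl(q(B)\leq\varepsilon\bigr) - \PR\bigl(\det Q<\varepsilon^\rho\bigr) - \PR\Bigl(\sup_{t\leq 1}|B_t|>\varepsilon^{-\rho}\Bigr).
\end{equation*}
By Markov's inequality and Proposition~\ref{SUPORT2}, the second term is bounded by $\varepsilon^{\rho p}\,\E[(\det Q)^{-p}]=O(\varepsilon^{\rho p})$ for every $p\geq 1$; by the reflection principle the third term is $O(\exp(-c\varepsilon^{-2\rho}))$. Both are negligible relative to $\varepsilon^{d(d+1)/2}$, so it suffices to prove the small-ball lower bound $\PR(q(B)\leq\varepsilon)\geq c\,\varepsilon^{d(d+1)/2}$ for small $\varepsilon$.

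\textbf{Bridge decomposition.} I write $B^i_t=tB^i_1+\beta^i_t$, where $(\beta^1,\dots,\beta^{d-1})$ are independent standard Brownian bridges on $[0,1]$, independent of $B_1$. Using $\beta^i_1=0$ and integration by parts, a direct computation gives, for $j\neq p$,
\begin{equation*}
\int_0^1 B^j_s\,dB^p_s=\tfrac12 B^j_1 B^p_1+L^{jp}(\beta)+B^p_1\int_0^1\beta^j_s\,ds-B^j_1\int_0^1\beta^p_s\,ds,
\end{equation*}
where $L^{jp}(\beta):=\tfrac12\int_0^1(\beta^j\,d\beta^p-\beta^p\,d\beta^j)$ is the Lévy area of the bridge. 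Fix $R>0$ and then $c=c(d,R)>0$ small enough so that on the event
\begin{equation*}
\Omega_0:=\Bigl\{\max_i|B^i_1|\leq c\varepsilon\Bigr\}\cap\Bigl\{\max_{j\neq p}|L^{jp}(\beta)|\leq c\varepsilon\Bigr\}\cap\Bigl\{\max_i\Bigl|\int_0^1\beta^i_s\,ds\Bigr|\leq R\Bigr\}
\end{equation*}
the identity above forces $|\int_0^1 B^j_s\,dB^p_s|\leq \varepsilon/(2(d-1)(d-2))$ and $|B^i_1|\leq \varepsilon/(2(d-1))$ for all $i,j\neq p$; summing, $\Omega_0\subset\{q(B)\leq\varepsilon\}$.

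\textbf{Small-ball bound via joint density.} By independence of $B_1$ and $\beta$,
\begin{equation*}
\PR(\Omega_0)=\PR\Bigl(\max_i|B^i_1|\leq c\varepsilon\Bigr)\cdot \PR\Bigl(\max_{j<p}|L^{jp}(\beta)|\leq c\varepsilon,\ \max_i\Bigl|\int_0^1\beta^i_s\,ds\Bigr|\leq R\Bigr).
\end{equation*}
The first factor is $\geq c_1\varepsilon^{d-1}$ by the Gaussian density of $B_1$. For the second, the random vector
$F:=\bigl((L^{jp}(\beta))_{j<p},(\int_0^1\beta^i_s\,ds)_i\bigr)\in\R^{(d-1)(d-2)/2+(d-1)}$
is a smooth Wiener functional living in the first two Wiener chaoses with non-degenerate Malliavin covariance matrix, hence admits a smooth joint density $f_F$. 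The main obstacle is to prove that $f_F$ is strictly positive on a neighbourhood of the origin: this follows from the Malliavin representation of $f_F$ combined with a Stroock--Varadhan type support argument (the null path $\beta\equiv 0$ lies in the topological support of $\beta$, and together with Malliavin non-degeneracy this gives positivity on a full neighbourhood of $0$). Granting this, the second factor is $\geq c_2\,\varepsilon^{(d-1)(d-2)/2}$ by continuity of $f_F$, and combining,
\begin{equation*}
\PR\bigl(q(B)\leq\varepsilon\bigr)\geq \PR(\Omega_0)\geq c\,\varepsilon^{(d-1)+(d-1)(d-2)/2}=c\,\varepsilon^{d(d-1)/2}\geq c\,\varepsilon^{d(d+1)/2}\quad\text{for }\varepsilon\leq 1,
\end{equation*}
which, combined with the first paragraph, establishes \eqref{Sup2}.
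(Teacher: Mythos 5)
Your opening decomposition (splitting $\PR(\Upsilon_{\rho,\varepsilon})$ into the small--ball probability for $q(B)$ minus the two error terms, controlled by Proposition~\ref{SUPORT2} together with Chebyshev and the reflection principle) coincides exactly with the paper's. Where you diverge is in the small--ball lower bound for $q(B)$. The paper first uses $q(B)\leq 2q'(B)+q'(B)^2$ to reduce to the ``one--sided'' quantity $q'(B)$, identifies $q'(B)\leq\sqrt m\,|X_1|$ for the stochastic--area diffusion $X=(B^i,\int_0^\cdot B^j\,dB^p)$, and then invokes the strong H\"ormander condition for $X$ (so that $X_1$ has a continuous, everywhere positive density) to conclude $\PR(|X_1|\leq r)\gtrsim r^m$. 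You instead perform a bridge decomposition $B^i_t=tB^i_1+\beta^i_t$, split $\PR(\Omega_0)$ into the explicit Gaussian factor for $B_1$ and a factor involving the bridge functionals $F=((L^{jp}(\beta))_{j<p},(\int_0^1\beta^i)_i)$. Your algebraic identity for $\int_0^1 B^j\,dB^p$, the inclusion $\Omega_0\subset\{q(B)\leq\varepsilon\}$, and the product structure are all correct, and your route even yields the sharper exponent $\tfrac12 d(d-1)$.

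The problem is the density--positivity step, which is where the whole weight of the argument lies and which you leave at the level of an assertion. The criterion you invoke --- ``the null path $\beta\equiv 0$ lies in the support, and together with Malliavin non--degeneracy this gives positivity of $f_F$ on a neighbourhood of $0$'' --- is not correct as a criterion. The standard Malliavin/Nualart positivity criterion at a point $y$ requires \emph{some} Cameron--Martin path $h$ with $F(h)=y$ at which the derivative $DF(h)$ is non--degenerate. Here the natural candidate $h\equiv 0$ fails: $L^{jp}$ is homogeneous of degree two, so $D L^{jp}(0)=0$ and the deterministic Malliavin covariance of $F$ at the null path is degenerate in the L\'evy--area directions. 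A.s.\ non--degeneracy of $\gamma_F$ only gives smoothness of $f_F$ and that $0$ lies in the \emph{closure} of $\{f_F>0\}$; it does not on its own give $f_F>0$ on a neighbourhood of $0$. The conclusion $f_F(0)>0$ is in fact true, but your sketch does not establish it, and closing the gap essentially brings you back to a H\"ormander/support argument (or an explicit computation of the law of the L\'evy area of the bridge), which is exactly the route the paper takes by working directly with the diffusion $X$ --- for which the strong H\"ormander condition and the ``support equals all of $\R^m$'' claim are both elementary to verify, so the positivity theorem cited there applies cleanly. I would recommend either adopting the paper's reduction to $X_1$, or replacing your last step with an explicit verification of Nualart's criterion at some non--trivial $h$ with $F(h)=0$ and $DF(h)$ of full rank.
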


\bpr Using the previous proposition and Chebyshev's inequality we
get
\begin{equation*}
\PR(\det Q<\varepsilon ^{\rho })\leq \varepsilon ^{p\rho }\E\left\vert \det
Q\right\vert ^{-p}\leq C_{p,d}\varepsilon ^{p\rho } \quad\mbox{and}\quad
\PR(\sup_{t\leq 1}\left\vert B_{t}\right\vert >\varepsilon ^{-\rho })\leq \exp
(-\frac{1}{C\varepsilon ^{2\rho }}).
\end{equation*}
Let $q^{\prime }(B)=\sum_{i=1}^{d-1}\left\vert B_{1}^{i}\right\vert
+\sum_{j<p}\left\vert \int_{0}^{1}B_{s}^{j}dB_{s}^{p}\right\vert .$ Since $%
\left\vert \int_{0}^{1}B_{s}^{j}dB_{s}^{p}\right\vert \leq \left\vert
B_{1}^{j}\right\vert \left\vert B_{1}^{p}\right\vert +\left\vert
\int_{0}^{1}B_{s}^{p}dB_{s}^{j}\right\vert $ we have $q(B)\leq 2q^{\prime
}(B)+q^{\prime}(B)^2$ so that $\{q^{\prime }(B)\leq \frac{1}{3}\varepsilon
\}\subset \{q(B)\leq \varepsilon \}.$ We will now use the following fact:
consider the diffusion process $X=(X^{i},X^{j,p},$ $i=1,...,d,1\leq j<p\leq
d)$ solution of the equation $%
dX_{t}^{i}=dB_{t}^{i},dX_{t}^{j,p}=X_{t}^{j}dB_{t}^{p}.$ The strong H\"{o}rmander condition holds for this process and the support of the law of $%
X_{1} $ is the whole space. So the law of $X_{1}$ is absolutely continuous
with respect to the Lebesgue measure and has a continuous and strictly
positive density $p.$ This result is well known (see for example \cite{KusuokaStroock:87}
or \cite{BallyCaramellino:12}). We denote $c_{d}:=\inf_{\left\vert x\right\vert \leq
1}p(x)>0$ and this is a constant which depends on $d$ only. Then, by
observing that $q^{\prime}(B)\leq \sqrt{m} \,|X_1|$, where $m=\frac{1}{2}%
d(d+1)$ is the dimension of the diffusion $X$, we get
\begin{equation*}
\PR(q(B)\leq \varepsilon )\geq \P\Big(q^{\prime }(B)\leq \frac{\varepsilon }{3}%
\Big)\geq \P\Big(\left\vert X_{1}\right\vert \leq \frac{\varepsilon }{3\sqrt m%
}\Big)\geq \frac{\varepsilon ^{m}}{(3\sqrt m)^{m}}\times \bar c_{d},
\end{equation*}%
with $\bar c_d>0$. So finally we obtain
\begin{equation*}
\PR(\Upsilon_{\rho ,\varepsilon }(B))\geq \bar c_{d}\varepsilon ^{\frac{1}{2}%
d(d+1)}-C_{p,d}\varepsilon ^{p\rho }-\exp (-\frac{1}{C\varepsilon ^{2\rho }}%
).
\end{equation*}%
Choosing $p>\frac{1}{2\rho }d(d+1)$ and $\varepsilon $ small we obtain our
inequality.
\epr

\section{Density estimates via local inversion}
\label{sectioninversefunction}

In this section we see how to use the inverse function theorem to transfer a known estimate for a Gaussian random variable to its image via a certain function $\eta$. For a standard version of the inverse function theorem see \cite{rudin-principles}.

We consider $\Phi(\theta)=\theta+\eta(\theta)$, for a three times differentiable function $\eta :\R^m \rightarrow \R^m $. Define
\be{ci}
c_2(\eta) =\max_{i,j=1,..,m}\sup_{|x|\leq 1}|\partial^2_{ij}\eta(x)|,\quad
c_3(\eta) =\max_{i,j,k=1,..,m}\sup_{|x|\leq 1}|\partial^3_{ijk}\eta(x)|,
\ee
and
\be{defheta}
h_\eta=\frac{1}{16 m^2 (c_2(\eta)+\sqrt{c_3(\eta)})}
\ee

\begin{lemma}\label{invfun1}
Take $h_\eta$ as above. If the function $\eta$ is such that
\begin{equation*}
\eta\in C^3(\R^m,\R^m),\quad \eta(0)=0,\quad \nabla\eta(0)\leq \frac{1}{2},
\end{equation*}
then there exists a neighborhood of $0$, that we denote with $V_{h_\eta} \subset B(0,2 h_\eta)$, such that $\Phi: V_{h_\eta}\rightarrow B\left(0,\frac{1}{2} h_\eta\right)$ is a diffeomorphism. In particular, if we denote with $\Phi^{-1}$ the local inverse of $\Phi$, we have
\begin{equation*}
\Phi^{-1}: B\left(0,\frac{1}{2} h_\eta\right) \rightarrow B\left(0,2 h_\eta\right),
\end{equation*}
and we have this quantitative estimate:
\begin{equation}\label{estinv}
\forall y \in B\left(0,\frac{1}{2} h_\eta\right),\quad \frac{1}{4} |\Phi^{-1}(y)|\leq |y| \leq 4|\Phi^{-1}(y)|.
\end{equation}
\end{lemma}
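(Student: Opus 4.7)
The plan is to realize $\Phi^{-1}$ on $B(0,h_\eta/2)$ by a direct Banach fixed point argument on the equation $\theta=y-\eta(\theta)$, and then read off \eqref{estinv} from the fixed point identity. The construction is the standard inverse function theorem, with constants tracked explicitly against $h_\eta$.

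\emph{Step 1: control of $\nabla\eta$ on $B(0,2h_\eta)$.} Since $\eta\in C^3$, the mean value inequality applied componentwise yields, for every $\theta$ with $|\theta|\le 2h_\eta$ and every pair $i,j$,
\[
|\partial_j\eta^i(\theta)-\partial_j\eta^i(0)|\le \sqrt{m}\,c_2(\eta)\,|\theta|,
\]
and bounding the operator norm by the Frobenius norm (which is itself bounded by $m$ times the largest entry) gives $\lambda^*(\nabla\eta(\theta)-\nabla\eta(0))\le m^{3/2}c_2(\eta)|\theta|$. Plugging in the definition \eqref{defheta} of $h_\eta$, for $|\theta|\le 2h_\eta$ this quantity is bounded by
\[
\frac{2m^{3/2}c_2(\eta)}{16m^2(c_2(\eta)+\sqrt{c_3(\eta)})}\ \le\ \frac{1}{8\sqrt{m}}\ \le\ \frac{1}{8}.
\]
Combined with the hypothesis $\lambda^*(\nabla\eta(0))\le \tfrac12$, this gives $\lambda^*(\nabla\eta(\theta))\le\tfrac58=:c<1$ throughout $B(0,2h_\eta)$. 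In particular $\nabla\Phi(\theta)=I+\nabla\eta(\theta)$ is invertible there, and integrating along segments yields
\[
|\eta(\theta_1)-\eta(\theta_2)|\le c\,|\theta_1-\theta_2|\qquad \forall\,\theta_1,\theta_2\in B(0,2h_\eta),
\]
hence $|\eta(\theta)|\le c|\theta|$ since $\eta(0)=0$.

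\emph{Step 2: Banach fixed point.} For $y\in B(0,h_\eta/2)$, solving $\Phi(\theta)=y$ is equivalent to solving $\theta=T_y(\theta)$ with $T_y(\theta):=y-\eta(\theta)$. By Step 1, $T_y$ is $c$-Lipschitz on $\overline{B(0,2h_\eta)}$, and
\[
|T_y(\theta)|\le |y|+c|\theta|\le \tfrac{h_\eta}{2}+2c\,h_\eta =\tfrac{7}{4}h_\eta<2h_\eta,
\]
so $T_y$ stabilizes $\overline{B(0,2h_\eta)}$. The contraction principle produces a unique fixed point $\theta_y\in B(0,2h_\eta)$.

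\emph{Step 3: diffeomorphism.} Set $V_{h_\eta}:=\Phi^{-1}(B(0,h_\eta/2))\cap B(0,2h_\eta)$, open by continuity of $\Phi$. By Step 2, $\Phi\colon V_{h_\eta}\to B(0,h_\eta/2)$ is a bijection, and by Step 1, $\nabla\Phi$ is invertible on $V_{h_\eta}$; the classical inverse function theorem then upgrades $\Phi^{-1}$ to a $C^3$ diffeomorphism onto $B(0,h_\eta/2)$.

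\emph{Step 4: two-sided bound.} For $y\in B(0,h_\eta/2)$ and $\theta=\Phi^{-1}(y)$, the fixed point identity $\theta+\eta(\theta)=y$ combined with $|\eta(\theta)|\le c|\theta|$ yields
\[
(1-c)|\theta|\le|y|\le(1+c)|\theta|.
\]
With $c=\tfrac58$ one has $1/(1-c)=8/3<4$ and $1+c=13/8<4$, so $\tfrac14|\theta|\le|y|\le 4|\theta|$, which is \eqref{estinv}.

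The main obstacle is not conceptual but accounting: one must verify that the single prescription \eqref{defheta} of $h_\eta$ is simultaneously small enough to force $\lambda^*(\nabla\eta(\theta))<1$ on $B(0,2h_\eta)$, yet large enough relative to the target $B(0,h_\eta/2)$ that $T_y$ actually stabilizes $B(0,2h_\eta)$. The $\sqrt{c_3(\eta)}$ term in the denominator of $h_\eta$ plays no role for the present lemma; it is included to meet the stronger demands of Lemma~\ref{invfun2}, where third-order Taylor information on $\eta$ enters the picture.
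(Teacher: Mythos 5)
Your proof is correct and takes a genuinely cleaner route than the paper's. You solve $\Phi(\theta)=y$ via the naive fixed-point map $T_y(\theta)=y-\eta(\theta)$, establishing contractivity directly from a uniform operator-norm bound $\lambda^*(\nabla\eta)\le 5/8$ on $B(0,2h_\eta)$, obtained from the hypothesis $\lambda^*(\nabla\eta(0))\le 1/2$ together with a mean-value bound that involves only $c_2(\eta)$. The paper instead uses the Newton-style iteration $U_y(\theta)=(\nabla\Phi(0))^{-1}\big(y-\int_0^1(1-t)\nabla^2\eta(t\theta)[\theta,\theta]\,dt\big)$, which subtracts off the full linear part of $\eta$ and controls the remainder by second-order Taylor expansion; showing $U_y$ is a $\tfrac14$-contraction there requires invoking both $c_2(\eta)$ and $c_3(\eta)$. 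Your approach dispenses with $c_3$ entirely for this lemma, as you correctly observe (it is only needed downstream in Lemma~\ref{invfun2}). Both proofs share the tacit assumption that $2h_\eta\le 1$, so that the suprema over $B(0,1)$ defining $c_2(\eta),c_3(\eta)$ in \eqref{ci} actually bound the derivatives on $B(0,2h_\eta)$; the paper never remarks on this either. Both proofs recover \eqref{estinv} from the fixed-point identity, you from $\theta+\eta(\theta)=y$ and the paper from $\theta=U_y(\theta)$, and your resulting constants $3/8$ and $13/8$ sit comfortably inside the claimed $1/4$ and $4$.
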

\br{trw}
Here we write $\Phi^{-1}$ for the inverse of the restriction of $\Phi$ to $V_{h_\eta}$, what is called a \emph{local} inverse.
\er

\begin{proof}
We have
\begin{equation*}
\nabla\Phi(0) = \mathrm{Id}+\nabla\eta(0).
\end{equation*}
So
\begin{equation*}
|\nabla\Phi(0) x|^2\geq \frac{1}{2}|x|^2 - |\nabla\eta(0)x|^2 \geq \frac{1}{2}|x|^2 - \frac{1}{4}|x|^2=\frac{1}{4}|x|^2.
\end{equation*}
and
\begin{equation*}
|\nabla\Phi(0) x|^2 \leq 2|x|^2 + 2|\nabla\eta(0)x|^2
\leq 2|x|^2 + \frac{1}{2}|x|^2\leq \frac{5}{2}|x|^2.
\end{equation*}
Therefore
\begin{equation*}
\frac{1}{2}|x| \leq |\nabla\Phi(0) x| \leq \sqrt{3}|x|
\end{equation*}
This implies $\Phi(0)$ is invertible locally around $0$, and the local inverse differentiable, using the classical inverse function theorem. We now look now at the image of the inverse, and at the estimates \eqref{estinv}.
We develop $\eta$ around $0$, writing $\nabla^2 \eta(x)[u,v]$ to denote $\nabla^2\eta(x)$ computed in $u$ and $v$.
\[
\eta(\theta)=\nabla\eta(0)\theta+\int_0^1(1-t)\nabla^2\eta (t\theta)[\theta,\theta] dt.
\]
Fix $y\in \R^m$. Suppose $\Phi(\theta)=y$. Then
\begin{align*}
\theta &=(\nabla\Phi(0))^{-1} \nabla\Phi(0) \theta\\
&=(\nabla\Phi(0) )^{-1}
(\theta+\nabla\eta(0) \theta ) \\
&=(\nabla\Phi(0) )^{-1}
\left(\theta+ \eta(\theta) -\int_0^1(1-t)\nabla^2\eta(t\theta)[\theta,\theta] dt  \right) \\
&=(\nabla\Phi(0) )^{-1}
\left( y -\int_0^1(1-t) \nabla^2\eta(t\theta)[\theta,\theta] dt \right).
\end{align*}
We define
$$
U_y(\theta)=\left( y -\int_0^1(1-t) \nabla^2\eta(t\theta)[\theta,\theta] dt \right),
$$
so that $\theta$ can be seen as a fixed point for $U_y$.
Recall that $|\frac{1}{2}x|\leq |\nabla\Phi(0)x |$.
\begin{align*}
|U_y(\theta_1)-U_y(\theta_2)|&=
\left| (\nabla\Phi(0) )^{-1}
\left( \int_0^1(1-t) (\nabla^2\eta(t\theta_2)[\theta_2,\theta_2]-\nabla^2\eta(t\theta_1)[\theta_1,\theta_1]) dt \right) \right| \\
&\leq 2 \left|
\int_0^1(1-t) (\nabla^2\eta(t\theta_2)[\theta_2,\theta_2]-\nabla^2\eta(t\theta_1)[\theta_1,\theta_1]) dt \right| \\
&\leq 2
\int_0^1(1-t) (|\nabla^2\eta(t\theta_1)[\theta_1,\theta_1-\theta_2]|+|\nabla^2\eta (t\theta_1)[\theta_1-\theta_2,\theta_2]| \\
&+| \nabla^2\eta(t\theta_1)[\theta_2,\theta_2]-\nabla^2\eta (t\theta_2)[\theta_2,\theta_2] | ) dt.
\end{align*}
Now, form \eqref{defheta}, for $\th_1,\th_2 \in B(0, h_\eta)$
\begin{align*}
|\nabla^2\eta(t\theta_1)[\theta_1,\theta_1-\theta_2]|\leq m^2 c_2(\eta) h_\eta |\theta_1-\theta_2|\leq \frac{1}{16}|\th_1-\th_2|,
\end{align*}
and
\begin{align*}
|\nabla^2\eta(t\theta_1)[\theta_2,\theta_2]-\nabla^2\eta(t\theta_2)[\theta_2,\theta_2] | \leq m^3 c_3(\eta) |\theta_1-\theta_2| h_\eta^2\leq \frac{1}{256}|\th_1-\th_2|,
\end{align*}
and therefore
\begin{equation}\label{contr}
|U_y(\theta_1)-U_y(\theta_2)| \leq
\frac{1}{4} |\theta_1-\theta_2|.
\end{equation}

For $y\in B(0,\frac{1}{2}h_\eta)$ and $\theta\in B(0,2h_\eta)$ this implies
$$
|U_y(\theta)|\leq |U_y(\theta)-U_y(0)|+|U_y(0)| \leq \frac{1}{4}|\theta|+2y \leq 2 h_\eta
$$
Define now the sequence
$$
\theta_0=0,\quad \theta_{k+1}=U_y(\theta_k).
$$
We know that $\theta_k\in B(0,2 h_\eta)$ for any $k\in \N$, and therefore inequality \eqref{contr} implies
\begin{equation*}
|U_y(\theta_k)-U_y(\theta_{k+1})| \leq
\frac{1}{4} |\theta_k-\theta_{k+1}|.
\end{equation*}
The Banach fixed-point theorem tells us that $\theta_k$ converges to the unique solution of $\theta=U_y(\theta)$, which is $\theta=\Phi^{-1}(y)$, and $\theta\in B(0,2 h_\eta)$. So it is possible to define the local inverse $\Phi^{-1}$ on $B\left(0,\frac{1}{2} h_\eta\right)$, and
$$
V_{h_\eta}:=\Phi^{-1}B\left(0,\frac{1}{2} h_\eta\right)\subset B(0,2 h_\eta).
$$
Now, for $y\in B(0,\frac{1}{2}h_\eta)$, let $\theta=\Phi^{-1}(y)$ and the following inequalities hold
\begin{align*}
|\theta|&=|U_y(\theta)|\leq \frac{1}{2}\theta+2|y|
&\Rightarrow &
|\theta|\leq 4|y| \\
|\theta|&=U_y(\theta)\geq|U_y(0)|-|U_y(\theta)-U_y(0)|\geq \frac{1}{2}|y| -\frac{1}{2}|\theta|
&\Rightarrow &|\theta|\geq \frac{1}{4}|y|.
\end{align*}
\end{proof}

Let now $\Theta$ be a $m$-dimensional centered Gaussian variable with covariance matrix $Q$. Denote by $\underline{\lambda}$ and $\overline{\lambda}$ the lowest and the largest eigenvalues of $Q$. Keeping in mind the setting of the last subsection, we also introduce the notation
\be{c*}
c_*(\eta,h)=\sup_{|x|\leq 2h} \max_{i,j} |\partial_i\eta^j(x)|
\ee
for $h>0$. Recall we are supposing $\eta\in C^3(\R^m,\R^m)$ and $\eta(0)=0$.

Take $r>0$ such that
\be{hpimpl}
c_*(\eta,16 r)\leq \frac{1}{2m} \sqrt{\frac{\underline{\l}}{\overline{\l}}},\quad \quad
r\leq h_\eta =\frac{1}{16 m^2 (c_2(\eta)+\sqrt{c_3(\eta)})}.
\ee
We take a localizing function as in \eqref{defU}:
\be{hpimplbis}
U=\prod_{i=1}^m \psi_{r}(\Theta_i).
\ee
\bl{invfun2}
Let $Q$ be non degenerate. Let $r$ such that \eqref{hpimpl} holds and set $U$ as in \eqref{hpimplbis}. Then the density $p_{G,U}$ of
\[
G:=\Phi(\Theta)=\Theta+\eta(\Theta)
\]
under $\PR_U$ has the following bounds on $B(0,r)$:
\be{invest}
\frac{1}{C \det Q^{1/2}
}\exp\left(-\frac{C}{\underline{\l}}|z|^2 \right) \leq
p_{G,U}(z) \leq \frac{C}{\det Q^{1/2} }\exp\left(-\frac{1}{C \overline{\l}}|z|^2 \right)
\ee
\el
\begin{proof}
For a general nonnegative, measurable function $f:\R^m\rightarrow \R$ with support included in $B(0,4r)$, we compute $\E(f(G) 1_{\{\Theta\in \Phi^{-1}B(0,4r)\}})$. Here $\Phi^{-1}$ is the local diffeomorphism of the inverse function theorem. After the multiplication with the characteristic function, on the support of the random variable that we are averaging, $\Phi$ is a diffeomorphism and the first equality holds. The second follows from the change of variable suggested by Lemma \ref{invfun1} for $G=\Phi(\Th)$
\begin{align*}
&\E\left(f(G) 1_{\{\Theta\in \Phi^{-1}B(0,4r)\}}\right)=\\
&\quad=\int_{\Phi^{-1}(B(0,4r))} f(\Phi(\theta))\frac{1}{(2\pi)^{m/2}\det Q^{1/2}}\exp\left(-\frac{1}{2}\langle Q^{-1}\theta,\theta\rangle \right) d\theta\\
&\quad=\int_{B(0,4r)} f(z) \bar{p}_G(z) dz,
\end{align*}
where for $z\in B(0,4r)$
$$
\bar{p}_G(z)=
\frac{1}{(2\pi)^{m/2}\det Q^{1/2} |\det \nabla\Phi(\Phi^{-1}(z))|
}\exp\left(-\frac{1}{2}\langle Q^{-1}\Phi^{-1}(z),\Phi^{-1}(z)\rangle \right).
$$
Again from Lemma \ref{invfun1}, since $4r\leq \frac{h_\eta}{2}$, we have $z\in B(0,4r)\Rightarrow \theta \in B(0,16r)$. Using $c_*(\eta,16r)\leq \frac{1}{2m} \sqrt{\frac{\underline{\l}}{\overline{\l}}}$,
$$
\frac{1}{2}|x|^2\leq(1-m\,c_*(\eta,h_\eta))|x|^2
\leq |\langle\nabla\Phi(\theta)x,x \rangle|
\leq(1+m\,c_*(\eta,h_\eta))|x|^2\leq 2|x|^2.
$$
Therefore if $z\in B(0,4r)$
$$
2^{-m}\leq |\det \Phi(\Phi^{-1}(z))|\leq 2^m.
$$
Moreover, using Lemma \ref{invfun1}
\begin{align*}
&\langle Q^{-1} \Phi^{-1}(z),\Phi^{-1}(z)\rangle\leq \frac{1}{\underline{\lambda}}|\Phi^{-1}(z)|^2\leq
\frac{16}{\underline{\lambda}}|z|^2,\\
&\langle Q^{-1} \Phi^{-1}(z),\Phi^{-1}(z)\rangle \geq
\frac{1}{\overline{\lambda}}|\Phi^{-1}(z)|^2\geq
\frac{1}{16\overline{\lambda}}|z|^2.
\end{align*}
Therefore
\begin{align*}
\frac{1}{(8\pi)^{m/2}\det Q^{1/2}
}\exp\left(-\frac{8}{\underline{\lambda}}|z|^2 \right)
\leq \bar{p}_G(z) \leq
\frac{2^{m/2}}{\pi^{m/2}\det Q^{1/2}
}\exp\left(-\frac{1}{32\overline{\lambda}}|z|^2 \right).
\end{align*}
Now we define, as in \eqref{defU} the localization variables
\[
U_1=\prod_{i=1}^m \psi_{16r}(\Theta_i),\quad U_2=\prod_{i=1}^m \psi_{r}(\Theta_i).
\]
Notice that
\[
U_2\leq 1_{\{\Theta\in \Phi^{-1}B(0,4r)\}}\leq U_1,
\]
so that we have
\begin{align*}
\E\left(f(G) U_2\right)\leq
\E\left(f(G) 1_{\{\Theta\in \Phi^{-1}B(0,4r)\}}\right)
\leq
\E\left(f(G) U_1\right).
\end{align*}
The following bounds for the local densities follow:
\begin{align*}
p_{G,U_1}(z)&\geq\frac{1}{(8\pi)^{m/2}\det Q^{1/2}
}\exp\left(-\frac{8}{\underline{\lambda}}|z|^2 \right),
\\
p_{G,U_2}(z)&\leq \frac{2^{m/2}}{\pi^{m/2}\det Q^{1/2}
}\exp\left(-\frac{1}{32\overline{\lambda}}|z|^2 \right).
\end{align*}
$U_1 \geq U = U_2$, so for the localization via $U$ both bounds hold.
\end{proof}

\section{Estimates of the distance between localized densities}\label{app-mall}

\subsection{Elements of Malliavin calculus}
We recall some basic notions in Malliavin calculus. Our main reference is \cite{Nualart:06}. We consider a probability space $(\Omega,\mathcal{F},\PR)$ and a Brownian motion $W=(W^1_t,...,W^d_t)_{t\geq 0}$ and the filtration $(\mathcal{F}_t)_{t \geq 0}$ generated by $W$. For fixed $T>0$, we denote by $\mathcal{H}$ the Hilbert space $L^2([0,T],\R^d)$. For $h\in \mathcal{H}$ we introduce this notation for the It\^{o} integral of $h$: $W(h)=\sum_{j=1}^d \int_0^T h^j(s)dW_s^j$.

We denote by $C_p^\infty (\R^n)$ the set of all infinitely continuously differentiable functions $f : \R^n \rightarrow \R$ such that $f$ and all of its partial derivatives have polynomial growth. We also denote by $\mathcal{S}$ the class of simple random variables of the form
\[
F=f(W(h_1),...,W(h_n)),
\]
for some $f\in C_p^\infty(\R^n)$, $h_1,...,h_n$ in $\H$, $n\geq 1$.
The Malliavin derivative of $F\in \mathcal{S}$ is the $\H$ valued random variable given by
\be{defdermal}
DF=(DF^1,\dots,DF^d)^T=\sum_{i=1}^n \frac{\partial f}{\partial x_i}(W(h_1),...,W(h_n))h_i.
\ee
We introduce the Sobolev norm of $F$:
$$
\|F\|_{1,p}=[\E|F|^p+\E |DF|^p]^\frac{1}{p}
$$
where
$$
|DF|=\left(\int_0^T |D_s F|^2 ds\right)^\frac{1}{2}.
$$
It is possible to prove that $D$ is a closable operator and take the extension of $D$ in the standard way.
We can now define in the obvious way $DF$ for any $F$ in the closure of $\mathcal{S}$ with respect to this norm. Therefore, the domain of $D$ will be the closure of $\mathcal{S}$.

The higher order derivative of $F$ is obtained by iteration. For any $k \in \N$, for a multi-index $\alpha=(\alpha_1,...,\alpha_k)\in\{1,...,d\}^k$ and $(s_1,...,s_k)\in [0,T]^k$, we can define
$$
D^\alpha_{s_1,...,s_k} F := D^{\alpha_1}_{s_1} ... D^{\alpha_k}_{s_k} F.
$$
We denote by $|\alpha|=k$ the length of the multi-index. Remark that $D^\alpha_{s_1,...,s_k} F$, is a random variable with values in $\H^{\otimes k}$, and so we define its Sobolev norm as
$$
\|F\|_{k,p}=[\E|F|^p+\sum_{j=1}^k \E |D^{(j)} F|^p]^\frac{1}{p}
$$
where
$$
|D^{(j)} F|=\left(\sum_{|\alpha|=j}\int_{[0,T]^j}|D^\alpha_{s_1,...,s_j}F|^2 d s_1 ... d s_j\right)^{1/2}.
$$
The extension to the closure of $\mathcal{S}$ with respect to this norm is analogous to the first order derivative. Notice that with this notation $|DF|=|D^{(1)}F|$. Also notice that $D^{(j)}$ means "derivative of order $j$" and $D^j$ means "derivative with respect to $W^j$".

We denote by $\DD^{k,p}$ the space of the random variables which are $k$ times differentiable in the Malliavin sense in $L^p$, and $\DD^{k,\infty}=\bigcap_{p=1}^\infty \DD^{k,p}$. As usual, we also denote by $L$ the Ornstein-Uhlenbeck operator, i.e. $L=-\d\circ D$, where $\d$ is the adjoint operator of $D$.

We consider random vector $F=(F_1,...,F_n)$ in the domain of $D$. We define its \emph{Malliavin covariance matrix} as follows:
\[
\g_F^{i,j}=\langle D F_i,D F_j\rangle_\mathcal{H} = \sum_{k=1}^d \int_0^T D^k_s F_i\times D^k_s F_j ds.
\]

\subsection{Localization and density estimates}\label{local}
The following notion of localization is introduced in \cite{BallyCaramellino:12}. Consider a random variable $U\in[0,1]$ and denote
$$
d\PR_U=Ud\PR.
$$
$\PR_U$ is a non-negative measure (not a probability measure, in general). We also set $\E_U$ the expectation (integral) w.r.t. $\PR_U$, and denote
\begin{align*}
\| F \|_{p,U}^p&=\E_U(|F|^p)=\E(|F|^p U)\\
\| F \|_{k,p,U}^p&=\| F \|_{p,U}^p+\sum_{j=1}^k \E_U(|D^{(j)}F|^p).
\end{align*}
We assume that $U\in \DD^{1,\infty}$ and for every $p\geq 1$
\be{defmU}
m_U(p):=1+ \E_U | D \ln U |^{p} < \infty.
\ee
The specific localizing function we will use is the following. Consider the function depending on a parameter $a>0$:
\[
\psi_a(x)=1_{|x|\leq a}+\exp\left( 1-\frac{a^2}{a^2-(x-a)^2} \right)1_{a<|x|<2a}.
\]
For $\Theta_i\in \DD^{2,\infty}$ and $a_i>0$, $i=1\dots,n$ we define the localization variable:
\be{defU}
U=\prod_{i=1}^n \psi_{a_i}(\Theta_i)
\ee
For this choice of $U$ we have that for any $p,k\in \N_0$
\be{normlocalization}
m_U(p)
\leq
C \frac{\|\Theta\|_{1,p}^p}{|a|}
\ee
The proof of \eqref{normlocalization} follows from standard computations and inequality
\be{boundpsi}
\sup_x |(\ln \psi_a) (x)|^p \psi_a(x) \leq
\frac{C}{a^{p}}
\ee
In the following proposition we state the general lower and upper bound that we use in our density estimate. These results come from \cite{BallyCaramellino:12} and  \cite{BC14}.

\bp{generalbounds}
Let $F\in (\DD^{2,\infty})^d$.
\ben
\item
Suppose that for every $p\in \N: \E_U |\l_*(\g_F)|^{-p}<\infty$, $U\in \DD^{1,\infty}$ and $m_U(p)< \infty$.
Let $G \in (\DD^{2,\infty})^d$ such that for every $p\in \N$
\[
\E_U |\l_*(\g_G)|^{-p}<\infty.
\]
Then for every $p>d$
\be{generallowerbound}
\begin{split}
&p_{F,U}(y)\\
&\geq
p_{G,U}(y)- C m_U(p)^b \max\big\{1, (\E_U |\l_*(\g_G)|^{-p}) ^b
(\| F \|_{2,p,U}+\| G \|_{2,p,U}) \big\}
 \|F-G\|_{2,p,U}
\end{split}
\ee
where $C, b$ are constants depending only on $d,p$ and $m_U(p)$ is given by \eqref{defmU}.
\item
Assume $\E |\l_*(\g_F)|^{-p}<\infty,\,\forall p$. Then $\exists C, p, b$ constants depending only on the dimension $d$ such that
\be{generalupperbound}
|p_F(y)|\leq C \max\{1,\E |\l_*(\g_F)|^{-p} \|F\|_{2,p}  \} \PR(|F-y|<2)^b
\ee
\een
\ep

\bpr{}
\ben
\item
The lower bound \eqref{generallowerbound} for $p_{F,U}$ is a version of Proposition 2.5. in
\cite{BallyCaramellino:12} with the lowest eigenvalue instead of the determinant.
\item
The upper bound \eqref{generalupperbound} for $p_{F}$ is a version of Theorem 2.14, point A., in \cite{BC14}.  We take therein $q=0$, so there is no derivative, and $\Th=1$, that means that we are not localizing.
\een
\epr

\bibliographystyle{plain}
\bibliography{bibliografia}

\begin{thebibliography}{10}

\bibitem{BallyCaramellino:12}
V.~{Bally} and L.~{Caramellino}.
\newblock {Positivity and lower bounds for the density of Wiener functionals}.
\newblock {\em Potential Analysis}, 39:141--168, April 2013.

\bibitem{BC14}
V.~Bally and L.~Caramellino.
\newblock On the distances between probability density functions.
\newblock {\em Electron. J. Probab.}, 19:no. 110, 1--33, 2014.

\bibitem{BCP2}
V.~Bally, L.~Caramellino, and P.~Pigato.
\newblock {Diffusions under a local strong H\"ormander condition. Part II: tube
  estimates}.
\newblock {\em ArXiv e-prints}, 1607.04544, July 2016.

\bibitem{BenArousLeandre:91}
G.~Ben~Arous and R.~L{\'e}andre.
\newblock D\'ecroissance exponentielle du noyau de la chaleur sur la diagonale.
  {I}.
\newblock {\em Probab. Theory Related Fields}, 90(2):175--202, 1991.

\bibitem{BenArousLeandreII:91}
G.~Ben~Arous and R.~L{\'e}andre.
\newblock D\'ecroissance exponentielle du noyau de la chaleur sur la diagonale.
  {II}.
\newblock {\em Probab. Theory Related Fields}, 90(3):377--402, 1991.

\bibitem{[BGJ]}
K.~Bichteler, J-B. Gravereaux, and J.~Jacod.
\newblock {\em Malliavin Calculus for Processes with Jumps}, volume Stochastic
  mongraphs Volume 2.
\newblock Gordon and Breach Science Publishers, 1987.

\bibitem{CattiauxMesnager:02}
P.~Cattiaux and L.~Mesnager.
\newblock Hypoelliptic non-homogeneous diffusions.
\newblock {\em Prob. Theory and Rel. Fields}, 123:453--483, 2002.

\bibitem{DelarueMenozzi:10}
F.~Delarue and S.~Menozzi.
\newblock Density estimates for a random noise propagating through a chain of
  differential equations.
\newblock {\em J. Funct. Anal.}, 259(6):1577--1630, 2010.

\bibitem{[DY]}
C.~Donati-Martin and M.~Yor.
\newblock Fubini's theorem for double wiener integrals and the variance of the
  brownian path.
\newblock {\em Ann. Inst. Henri Poincar\'{e}}, 27:181--200, 1991.

\bibitem{Jerison1987}
D.~Jerison and A.~S{\'a}nchez-Calle.
\newblock {\em Subelliptic, second order differential operators}, pages 46--77.
\newblock Springer Berlin Heidelberg, Berlin, Heidelberg, 1987.

\bibitem{KusuokaStroock:85}
S.~Kusuoka and D.~Stroock.
\newblock Applications of the {M}alliavin calculus. {II}.
\newblock {\em Journal of the Faculty of Science of the Univ. of Tokyo Sect IA
  Math}, pages 1--76, 1985.

\bibitem{KusuokaStroock:87}
S.~Kusuoka and D.~Stroock.
\newblock Applications of the {M}alliavin calculus. {III}.
\newblock {\em J. Fac. Sci. Univ. Tokyo Sect. IA Math.}, 34(2):391--442, 1987.

\bibitem{Norris}
J.~Norris.
\newblock {Simplified Malliavin calculus}.
\newblock {\em Seminaire de Probabilitees XX: Lecture Notes in Math},
  1204:101–130, 1986.

\bibitem{Nualart:06}
D.~Nualart.
\newblock {\em Malliavin Calculus and Related Topics}.
\newblock Springer, Berlin, 2006.

\bibitem{pigato:14}
P.~Pigato.
\newblock {Tube estimates for diffusion processes under a weak H\"{o}rmander
  condition}.
\newblock {\em arXiv Preprint arXiv:1412.4917}, 2014.

\bibitem{rudin-principles}
W.~Rudin.
\newblock {\em Principles of mathematical analysis}.
\newblock McGraw-Hill Book Co., New York, third edition, 1976.
\newblock International Series in Pure and Applied Mathematics.

\bibitem{Sanchez-Calle1984}
A.~S{\'a}nchez-Calle.
\newblock Fundamental solutions and geometry of the sum of squares of vector
  fields.
\newblock {\em Inventiones mathematicae}, 78(1):143--160, 1984.

\end{thebibliography}

\end{document}